\newtheorem{theorem}{Theorem}[section]
\newtheorem{lemma}[theorem]{Lemma}
\newtheorem{proposition}[theorem]{Proposition}
\newtheorem{remark}[theorem]{Remark}
\theoremstyle{definition}
\newtheorem{example}[theorem]{Example}
\newenvironment{keywords}{{\bf Key words: }}{}
\newenvironment{AMS}{{\bf AMS subject classification: }}{}
\newcommand{\esssup}{\mathop{\mathrm{esssup}}}
\newcommand{\essinf}{\mathop{\mathrm{essinf}}}
\numberwithin{equation}{section}
\newcommand{\udots}{\mathinner{\mskip1mu\raise1pt\vbox{\kern7pt\hbox{.}}
\mskip2mu\raise4pt\hbox{.}\mskip2mu\raise7pt\hbox{.}\mskip1mu}}
\begin{document}

\title{Mean-Field Type FBSDEs under Domination-Monotonicity Conditions and Application to LQ Problems
\footnote{This work is supported in part by the National Key R\&D Program of China (2018YFA0703900) and the National Natural Science
Foundation of China (11871310).}}

\author{Ran Tian \footnote{Zhongtai Securities Institute for Financial Studies, Shandong University, Jinan, Shandong, 250100, China. email: {\tt tianyiran@mail.sdu.edu.cn}}  
and  
Zhiyong Yu \footnote{Corresponding author. School of Mathematics, Shandong University, Jinan, Shandong, 250100, China. email: {\tt yuzhiyong@sdu.edu.cn}. }}

\maketitle

\begin{abstract}
This paper is concerned with a class of mean-field type coupled forward-backward stochastic differential equations (MF-FBSDEs, for short), in which the coupling appears in integral terms, terminal terms, and initial terms. Inspired by various mean-field type linear-quadratic (MF-LQ, for short) optimal control problems, we proposed a type of randomized domination-monotonicity conditions, under which and the usual Lipschitz condition, we obtain a well-posedness result on MF-FBSDEs in the sense of square integrability including the unique solvability, an estimate of the solution, and the related continuous dependence property of the solution on the coefficients. The result of MF-FBSDEs in turn extends MF-LQ problems in the literature to a general situation where the initial states or the terminal states are also controlled at the same time, and gives explicit expressions of the related unique optimal controls. 
\end{abstract}

\begin{keywords}
forward-backward stochastic differential equation, stochastic linear-quadratic problem, stochastic optimal control, mean-field, domination-monotonicity condition
\end{keywords}\\

\begin{AMS}
93E20, 60H10, 49N10
\end{AMS}

\section{Introduction}

Historically, as a kind of mean-field type stochastic differential equations (MF-SDE, for short), the research on McKean-Vlasov SDEs started from the pioneering works of Kac \cite{Kac-56} and McKean \cite{McK-67}. In the past fifteen years, accompanying the research boom of mean-field games led by Huang et al. \cite{HMC-06} and Lasry and Lions \cite{LL-07}, the MF-SDEs gained a rapid development. Especially, mean-field type backward SDEs (MF-BSDEs, for short), which admit a typical structure different from the corresponding forward equations, were introduced and studied by Buckdahn et al. \cite{BDLP-09, BLP-09}. Due to the requirements of both theory and applications, a (forward) MF-SDE and an MF-BSDE were coupled together to form a system called an MF-FBSDE, and were studied by many scholars; see Carmona and Delarue \cite{CD-13, CD-15}, Bensoussan et al. \cite{BYZh-15}, Wei et al. \cite{WYY-19}, and so on.

In this paper, we consider the following MF-FBSDE:
\begin{equation}\label{1:FBSDE}
\left\{
\begin{aligned}
& \mathrm dx(s) =b\big( s,\theta(s),\mathbb E_t[\theta(s)] \big)\, \mathrm ds +\sum_{i=1}^d \sigma_i \big( s,\theta(s),\mathbb E_t[\theta(s)] \big)\, \mathrm dW_i(s),\quad s\in [t,T],\\
& \mathrm dy(s) = g\big( s,\theta(s),\mathbb E_t[\theta(s)] \big)\, \mathrm ds +\sum_{i=1}^d z_i(s)\, \mathrm dW_i(s),\quad s\in [t,T],\\
& x(t) = \Psi\big(y(t)\big),\qquad y(T) = \Phi\big(x(T), \mathbb E_t[x(T)]\big),
\end{aligned}
\right.
\end{equation}
where $\mathbb E_t[\cdot] \equiv \mathbb E[\cdot | \mathcal F_t]$ is the conditional expectation with respect to $\mathcal F_t$, $\theta(\cdot) := (x(\cdot)^\top,y(\cdot)^\top,z(\cdot)^\top)^\top$ with $z(\cdot) = (z_1(\cdot)^\top,z_2(\cdot)^\top,\dots,z_d(\cdot)^\top)^\top$ is the unknown process, and $W(\cdot) := (W_1(\cdot),W_2(\cdot),\dots,W_d(\cdot))^\top$ is a $d$-dimensional Brownian motion. Here and hereafter, the superscript $\top$ denotes the transpose of a vector or a matrix. For convenience, we also denote $\Gamma(\cdot) := (g(\cdot)^\top, b(\cdot)^\top, \sigma(\cdot)^\top)^\top$ and $\sigma(\cdot) := (\sigma_1(\cdot)^\top,\sigma_2(\cdot)^\top,\dots,\sigma_d(\cdot)^\top)^\top$. Then, all of the coefficients of MF-FBSDE \eqref{1:FBSDE} are collected by $(\Psi, \Phi, \Gamma)$.

To our best knowledge, in the literature on MF-FBSDEs, the coupling between forward variable $x(\cdot)$ and backward variables $(y(\cdot),z(\cdot))$ only appeared in the integral coefficient $\Gamma(\cdot)$ and the terminal coefficient $\Phi(\cdot)$, while the initial coefficient $\Psi(\cdot)$ was always in the decoupled form, i.e., $\Psi(\cdot) \equiv x_t$ which is a given $\mathcal F_t$-measurable random variable. In comparison, the coupling occurs in all three coefficients $(\Psi,\Phi,\Gamma)$ of \eqref{1:FBSDE}. The present paper is the first time to consider this general situation.

It is well known that, if only the uniformly Lipschitz continuity of the coefficients is proposed, MF-FBSDE \eqref{1:FBSDE} (even in the absence of initial coupling and/or mean-field terms) may be unsolvable on an arbitrarily large time interval (see a counterexample in Antonelli \cite{A-93}).  Carmona and Delarue \cite{CD-13} introduced some assumptions including the uniform ellipticity and the boundedness of the coefficient $\sigma(\cdot)$, and for the first time obtained an existence result on large intervals for coupled MF-FBSDEs. In their second paper \cite{CD-15}, they introduced the convexity assumption to get a unique solvability result. Moreover, Bensoussan et al. \cite{BYZh-15} proposed a kind of monotonicity conditions and improve the result in \cite{CD-15}. In the present paper, we will introduce a type of {\it domination-monotonicity conditions} to ensure the global solvability of MF-FBSDEs. Such conditions can be regarded as a generalization of monotonicity conditions in \cite{BYZh-15}. We also note that a special and linear case was studied in Wei et al. \cite{WYY-19}.

The domination-monotonicity conditions are actually rooted in various MF-LQ optimal control problems; see Yong \cite{Yong-13}, Ni et al. \cite{NLZh-16}, Sun \cite{S-17}, Li et al. \cite{LSX-19}, Wang \cite{W-19}, and Li et al. \cite{LLY-20}, for example. Let us explain this point in detail. When we apply the Pontryagin type maximum principle approach to study the necessary condition of an MF-LQ problem, by introducing an adjoint equation, the optimal control will be closely linked with a mean-field type Hamiltonian system (which is actually an MF-FBSDE, see \eqref{4.1:Ham} and \eqref{4.2:Ham}). Moreover, by imposing some  uniformly positive definiteness condition (see Condition (MF-FLQ-PD) and Condition (MF-BLQ-PD) in Section \ref{Sec:LQ}), the necessary condition is also sufficient. Taking MF-LQ problems as an understanding background and intuitive guidance, we propose the domination-monotonicity conditions for  the nonlinear MF-FBSDE \eqref{1:FBSDE}. The features and novelties of such conditions are summarized as the following three points:
\begin{enumerate}[(i)]
\item The domination-monotonicity conditions (see Assumption (H4) and Remark \ref{3:Rem:Sym}) accurately correspond to the expression forms of optimal controls and the related uniformly positive definiteness conditions in the four classes of MF-LQ problems (see Proposition \ref{4.1:Prop}, Proposition \ref{4.2:Prop} and Remark \ref{4.2:Rem}).

\item Compared with monotonicity conditions in \cite{BYZh-15}, the introduction of matrices and matrix-valued functions in our domination-monotonicity conditions increases flexibility and extensiveness. In detail, due to their introduction, on the one hand, a new kind of domination conditions are created which strengthen the Lipschitiz condition; on the other hand, at the same time it also weakens the monotonicity conditions in \cite{BYZh-15}. The flexible selection of matrices and matrix-valued functions provides us with more possibilities.

\item Compared with the classical case without involving mean-field terms (see \cite{Y-21+}), the domination-monotonicity conditions in this paper present a randomized form (see \eqref{3:M:Phi} and \eqref{3:M:Gamma}). This is a response to the difficulty caused by the mean-field terms. We notice that this randomization technique has been used in \cite{BYZh-15}. 
\end{enumerate}

In this paper, we will develop the {\it method of continuation}, which is originally introduced by Hu and Peng \cite{HP-95}, Yong \cite{Yong-97}, and Peng and Wu \cite{PW-99} when they studied the classical coupled FBSDEs without mean-field terms, and obtain the well-posedness of MF-FBSDE \eqref{1:FBSDE} in the sense of square integrability including the existence and uniqueness of the solution, an estimate, and the related continuous dependence property of the solution on the coefficients (see Theorem \ref{3:THM:FBSDE}). In order to overcome the technical difficulty brought with the mean-field terms, besides the randomization technique stated in the above Point (iii), we will also employ a decomposition technique of random variables: $\xi = (\xi -\mathbb E_t[\xi]) +\mathbb E_t[\xi]$ which is originally introduced by Yong \cite{Yong-13}. We notice that although this decomposition is simple, it has the advantage that its first part has zero conditional expectation and the second part is $\mathcal F_t$-measurable. Due to this, it is often adopted to deal with many mean-field type problems (see Lin et al. \cite{LJZh-19} and Tian et al. \cite{TYZh-20}, for example). But, the difficulty has still not been completely overcome. Then, we have to assume that the matrices and matrix-valued processes involved in the domination-monotonicity conditions are deterministic. We are finally able to overcome the difficulty and complete all analyses and proofs in this paper.

As an application of the unique solvability result of MF-FBSDE \eqref{1:FBSDE}, we re-examined several MF-LQ problems. It is worth noting that, in MF-LQ problems in the literature, control is only applied to the drift and diffusion terms of the controlled systems; while in the present paper, corresponding to the feature that the couplings appear in both $\Psi(\cdot)$ and $\Phi(\cdot)$ of MF-FBSDE \eqref{1:FBSDE}, control can also be applied to the initial values of forward MF-LQ problems (see the forward controlled system \eqref{4.1:Sys}, for example) and the terminal values of backward MF-LQ problems (see the backward controlled system \eqref{4.2:Sys}) at the same time. These extensions are interesting in their own right. As conclusions, the unique solvability of MF-FBSDE \eqref{1:FBSDE} under domination-monotonicity conditions implies that of Hamiltonian systems arising from these generalized MF-LQ problems, and further implies the existence and uniqueness of optimal controls.

The rest of this paper is organized as follows. In Section \ref{Sec:Pre}, we define some notations and give the preliminaries about MF-SDEs and MF-BSDEs. In Section \ref{Sec:FBSDE}, we rigorously state the domination-monotonicity conditions in detail, then under them, we prove the well-posedness of MF-FBSDE \eqref{1:FBSDE}. In Section \ref{Sec:LQ}, we apply the results of MF-FBSDEs obtained in the previous section to study some generalized MF-LQ problems, and obtain the explicit expressions of the unique optimal controls based on the related mean-field type Hamiltonian systems. Some proofs for the results of MF-SDEs and MF-BSDEs are put in Appendix \ref{Sec:App}.

\section{Notations and preliminaries}\label{Sec:Pre}

Let $\mathbb R^n$ be the $n$-dimensional Euclidean space equipped with the Euclidean inner product $\langle \cdot,\ \cdot \rangle$. The induced Euclidean norm is denoted by $|\cdot|$. Let $\mathbb R^{m\times n}$ be the  set of all $(m\times n)$ matrices and $\mathbb S^n \subset \mathbb R^{n\times n}$ be the subset consisting of all $(n\times n)$ symmetric matrices. In this paper, we use the operator norm of matrices:
\[
\Vert A \Vert := \sup_{\mathbb R^n \ni x \neq 0} \frac{|Ax|}{|x|} \quad \mbox{for any } A\in \mathbb R^{m\times n}.
\]

Let $0<T<\infty$ be a fixed time horizon. Let $(\Omega,\mathcal F,\mathbb F,\mathbb P)$ be a complete filtered probability space on which a $d$-dimensional standard Brownian motion $W(\cdot)$ is defined and $\mathbb F = \{ \mathcal F_s,\ 0\leq s\leq T \}$ is the natural filtration of $W(\cdot)$ augmented by all $\mathbb P$-null sets. We also set $\mathcal F=\mathcal F_T$.

Let $0\leq t <T$ and $q=1,2$. We introduce some Banach (or Hilbert in case of $L^2_{\mathcal F_t}(\Omega;\mathbb R^n)$ or $L^2_{\mathbb F}(t,T;\mathbb R^n)$) spaces as follows:
\begin{itemize}
\item $L^2_{\mathcal F_t}(\Omega;\mathbb R^n)$ is the set of all $\mathcal F_t$-measurable random variables $\xi:\Omega\rightarrow\mathbb R^n$ such that
\[
\Vert \xi \Vert_{L^2_{\mathcal F_t}(\Omega;\mathbb R^n)} := \big\{\mathbb E\big[|\xi|^2\big]\big\}^{1/2} <\infty.
\]
\item $L^2_{\mathbb F}(\Omega; L^q([t,T];\mathbb R^n))$ is the set of all $\mathbb F$-progressively measurable processes $f:[t,T] \times \Omega \rightarrow \mathbb R^n$ such that
\[
\Vert f(\cdot) \Vert_{L^2_{\mathbb F}(\Omega; L^q([t,T];\mathbb R^n))} := \bigg\{ \mathbb E \bigg[ \bigg( \int_t^T |f(s)|^q\, \mathrm ds \bigg)^{2/q} \bigg] \bigg\}^{1/2} <\infty.
\]
When $q=1$, we denote $L^2_{\mathbb F}(\Omega; L([t,T];\mathbb R^n)) =L^2_{\mathbb F}(\Omega; L^1([t,T];\mathbb R^n))$. When $q=2$, we denote $L^2_{\mathbb F}(t,T;\mathbb R^n) = L^2_{\mathbb F}(\Omega; L^2([t,T];\mathbb R^n))$.
\item $L^2_{\mathbb F}(\Omega;C([t,T];\mathbb R^n))$ is the set of all $\mathbb F$-progressively measurable processes $f:[t,T] \times \Omega \rightarrow \mathbb R^n$ such that for almost all $\omega\in \Omega$, $s\mapsto f(s,\omega)$ is continuous and
\[
\Vert f(\cdot) \Vert_{L^2_{\mathbb F}(\Omega;C([t,T];\mathbb R^n))} := \bigg\{ \mathbb E\bigg[ \sup_{s\in [t,T]} |f(s)|^2 \bigg] \bigg\}^{1/2} <\infty.
\] 
\end{itemize}

Now, we turn to consider an MF-SDE:
\begin{equation}\label{2:SDE}
\left\{
\begin{aligned}
& \mathrm dx(s) = b\big( s, x(s), \mathbb E_t[x(s)] \big)\, \mathrm ds +\sum_{i=1}^d \sigma_i\big( s, x(s), \mathbb E_t[x(s)] \big)\, \mathrm dW_i(s),\quad s\in [t,T],\\
& x(t) =x_t.
\end{aligned}
\right.
\end{equation}
For convenience, we denote $\sigma(\cdot):= ( \sigma_1(\cdot)^\top, \sigma_2(\cdot)^\top,\dots,\sigma_d(\cdot)^\top )^\top$. The coefficients $(x_t, b,\sigma)$ are assumed to satisfy the following

\medskip

\noindent{\bf Assumption (H1).} (i) For any $x,x'\in \mathbb R^n$, the processes $b(\cdot,x,x')$ and $\sigma(\cdot,x,x')$ are $\mathbb F$-progressively measurable. Moreover, $x_t \in L^2_{\mathcal F_t}(\Omega;\mathbb R^n)$, $b(\cdot,0,0) \in L^2_{\mathbb F}(\Omega;L([t,T];\mathbb R^n))$ and $\sigma(\cdot,0,0) \in L^2_{\mathbb F}(t,T;\mathbb R^{nd})$.

\smallskip

\noindent (ii) The mappings $b$ and $\sigma$ are uniformly Lipschitz continuous with respect to $(x,x')$, i.e., there exists a constant $L>0$ such that 
\[
\big|b(s,x,x') -b(s,\bar x,\bar x')\big| +\big|\sigma(s,x,x') -\sigma(s,\bar x,\bar x')\big| \leq L\big( | x-\bar x | +| x'-\bar x' |\big),
\]
for any $x,\bar x,x',\bar x'\in \mathbb R^n$ and almost all $(s,\omega) \in [t,T] \times \Omega$.

\medskip

We have the following result.

\begin{proposition}\label{2:Prop:SDE}
Under Assumption (H1), MF-SDE \eqref{2:SDE} with coefficients $(x_t,b,\sigma)$ admits a unique solution $x(\cdot) \in L^2_{\mathbb F}(\Omega;C([t,T];\mathbb R^n))$. Moreover, we have the following estimate:
\begin{equation}\label{2:SDE:Est1}
\mathbb E_t \bigg[ \sup_{s\in [t,T]} |x(s)|^2  \bigg] \leq K \bigg\{ |x_t|^2 +\mathbb E_t\bigg[ \bigg( \int_t^T \big|b(s,0,0)\big|\, \mathrm ds \bigg)^2 +\int_t^T \big|\sigma(s,0,0)\big|^2\, \mathrm ds \bigg]\bigg\},
\end{equation}
where $K:=K(T-t,L) >0$ is a constant depending on $(T-t)$ and the Lipschitz constant of the mappings $b$ and $\sigma$. Furthermore, let $(\bar x_t,\bar b,\bar \sigma)$ be another set of coefficients, and assume that $\bar x(\cdot) \in L^2_{\mathbb F}(\Omega;C([t,T];\mathbb R^n))$ is a solution to the MF-SDE with $(\bar x_t,\bar b,\bar \sigma)$. We continue to assume that $\bar x_t\in L^2_{\mathcal F_t}(\Omega;\mathbb R^n)$, $\bar b(\cdot,\bar x(\cdot),\mathbb E_t[\bar x(\cdot)]) \in L^2_{\mathbb F}(\Omega;L([t,T];\mathbb R^n))$ and $\bar\sigma (\cdot,\bar x(\cdot),\mathbb E_t[\bar x(\cdot)]) \in L^2_{\mathbb F}(t,T;\mathbb R^{nd})$. Then,
\begin{equation}\label{2:SDE:Est2}
\begin{aligned}
& \mathbb E_t \bigg[ \sup_{s\in [t,T]} |x(s) -\bar x(s)|^2 \bigg] \leq K \bigg\{ |x_t-\bar x_t|^2 +\mathbb E_t \bigg[ \bigg( \int_t^T \Big| b\big( s, \bar x(s), \mathbb E_t [\bar x(s)] \big)\\
& -\bar b\big( s, \bar x(s), \mathbb E_t [\bar x(s)] \big) \Big|\, \mathrm ds \bigg)^2 +\int_t^T \Big| \sigma \big( s, \bar x(s), \mathbb E_t [\bar x(s)] \big) -\bar\sigma \big( s, \bar x(s), \mathbb E_t [\bar x(s)] \big) \Big|^2\, \mathrm ds \bigg] \bigg\},
\end{aligned}
\end{equation}
where $K$ is the same constant as in \eqref{2:SDE:Est1}.
\end{proposition}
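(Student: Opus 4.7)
My plan is to treat the three claims in order: existence and uniqueness via a Picard/contraction argument, then the a priori estimate \eqref{2:SDE:Est1} by a direct BDG-plus-Gronwall computation, and finally the continuous dependence \eqref{2:SDE:Est2} by applying the a priori estimate to the difference equation. Throughout, the recurring device that absorbs the mean-field term will be the conditional Jensen inequality $|\mathbb E_t[\xi]|^2 \leq \mathbb E_t[|\xi|^2]$, which allows the dependence on the second (mean-field) argument to be dominated by the dependence on the first.

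For existence and uniqueness, I will set up the Picard iteration $x^{n+1}(s) = x_t + \int_t^s b(r,x^n(r),\mathbb E_t[x^n(r)])\,\mathrm dr + \sum_i \int_t^s \sigma_i(r,x^n(r),\mathbb E_t[x^n(r)])\,\mathrm dW_i(r)$ inside the Banach space $L^2_{\mathbb F}(\Omega;C([t,T];\mathbb R^n))$. Using the Lipschitz bound in (H1)(ii), the BDG inequality, and Jensen's inequality $\mathbb E[|\mathbb E_t[x^n - x^{n-1}](r)|^2] \leq \mathbb E[|x^n(r)-x^{n-1}(r)|^2]$, one gets $\mathbb E[\sup_{u\in[t,s]}|x^{n+1}(u)-x^n(u)|^2] \leq C\int_t^s \mathbb E[\sup_{u\in[t,r]}|x^n(u)-x^{n-1}(u)|^2]\,\mathrm dr$, so that $\mathcal T$ is a contraction on a small enough interval (or under an exponentially weighted norm), giving a unique fixed point globally on $[t,T]$ by a standard patching. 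Uniqueness follows from the same estimate applied to any two solutions.

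For \eqref{2:SDE:Est1}, I will square both sides of the integrated SDE, use $(a+b+c)^2\leq 3(a^2+b^2+c^2)$ together with Cauchy--Schwarz on the drift term, and apply the \emph{conditional} BDG inequality (valid because $W$ has independent increments after time $t$) to handle $\sup_{u\in[t,s]}|\int_t^u \sigma(r,x(r),\mathbb E_t[x(r)])\,\mathrm dW(r)|^2$. The Lipschitz condition yields $|b(r,x(r),\mathbb E_t[x(r)])|\leq |b(r,0,0)|+L|x(r)|+L|\mathbb E_t[x(r)]|$ and likewise for $\sigma$; taking $\mathbb E_t[\cdot]$ on both sides and using $|\mathbb E_t[x(r)]|^2\leq \mathbb E_t[|x(r)|^2]$ to absorb the mean-field contribution into $\mathbb E_t[|x(r)|^2]$, one obtains $\varphi(s) := \mathbb E_t[\sup_{u\in[t,s]}|x(u)|^2]\leq \text{(right-hand side of \eqref{2:SDE:Est1})}+C\int_t^s \varphi(r)\,\mathrm dr$, and Gronwall's lemma closes the estimate with a constant $K(T-t,L)$.

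For \eqref{2:SDE:Est2}, I will observe that $u(\cdot) := x(\cdot)-\bar x(\cdot)$ solves an MF-SDE with initial condition $x_t-\bar x_t$, coefficients $\widetilde b(s,u,u') := b(s,u+\bar x(s),u'+\mathbb E_t[\bar x(s)])-b(s,\bar x(s),\mathbb E_t[\bar x(s)])$ (and analogously $\widetilde\sigma$) which vanish at $(u,u')=(0,0)$ and retain the same Lipschitz constant $L$, and free terms $(b-\bar b)(s,\bar x(s),\mathbb E_t[\bar x(s)])$, $(\sigma-\bar\sigma)(s,\bar x(s),\mathbb E_t[\bar x(s)])$. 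Applying \eqref{2:SDE:Est1} directly to this MF-SDE for $u(\cdot)$ yields \eqref{2:SDE:Est2} with the same constant $K$. The main technical obstacle throughout is the presence of $\mathbb E_t[x(\cdot)]$ in the coefficients, which breaks pathwise Lipschitz estimates; this is precisely what is overcome by the conditional Jensen inequality combined with conditional BDG.
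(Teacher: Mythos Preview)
Your proposal is correct and follows essentially the same three-step structure as the paper's proof: a contraction argument on short subintervals (patched to $[t,T]$) for existence and uniqueness, a BDG--Lipschitz--Jensen--Gronwall computation for the a priori estimate, and reduction to the a priori estimate via the difference equation for continuous dependence. The only cosmetic difference is that the paper, in Steps~1 and~2, freezes only the mean-field argument and invokes classical (non-mean-field) SDE well-posedness and estimates as a black box to define the contraction map and to obtain the preliminary bound, whereas you run the full Picard iteration and the BDG computation from scratch; both routes rely on the same conditional Jensen device $|\mathbb E_t[\xi]|^2\le\mathbb E_t[|\xi|^2]$ and arrive at the same Gronwall inequality.
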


The results in Proposition \ref{2:Prop:SDE} are standard. We believe that they are not new. However, we are not able to find an exact reference, then we give a proof and put it in Appendix \ref{Sub:App1} for the readers' convenience.

Next, we consider an MF-BSDE as follows:
\begin{equation}\label{2:BSDE}
\left\{
\begin{aligned}
& \mathrm dy(s) = g \big( s,y(s),\mathbb E_t[y(s)], z(s), \mathbb E_t[z(s)] \big)\, \mathrm ds +\sum_{i=1}^d z_i(s)\, \mathrm dW_i(s),\quad s\in [t,T],\\
& y(T) =y_T,
\end{aligned}
\right.
\end{equation}
where $z(\cdot) := (z_1(\cdot)^\top, z_2(\cdot)^\top, \dots, z_d(\cdot)^\top)^\top$. For the coefficients $(y_T, g)$ of MF-BSDE \eqref{2:BSDE}, we introduce the following

\medskip

\noindent {\bf Assumption (H2).} (i) For any $y, y' \in \mathbb R^n$ and $z, z' \in \mathbb R^{nd}$, the process $g(\cdot,y,y',z,z')$ is $\mathbb F$-progressively measurable. Moreover, $y_T\in L^2_{\mathcal F_T}(\Omega;\mathbb R^n)$ and $g(\cdot,0,0,0,0) \in L^2_{\mathbb F}(\Omega;L([t,T];\mathbb R^n))$.

\smallskip

\noindent (ii) The mapping $g$ is uniformly Lipschitz continuous with respect to $(y,y',z,z')$, i.e., there exists a constant $L>0$ such that 
\[
\big|g\big( s,y,y',z,z' \big) -g\big( s,\bar y,\bar y',\bar z,\bar z' \big)\big| \leq L\big( | y-\bar y | +| y'-\bar y' | +| z-\bar z | +| z'-\bar z' | \big).
\]
for any $y,\bar y, y', \bar y'\in \mathbb R^n$, any $z,\bar z,z',\bar z' \in \mathbb R^{nd}$ and almost all $(s,\omega) \in [t,T]\times\Omega$.

\medskip

\begin{proposition}\label{2:Prop:BSDE}
Under Assumption (H2), MF-BSDE \eqref{2:BSDE} with coefficients $(y_T,g)$ admits a unique solution $(y(\cdot),z(\cdot)) \in L^2_{\mathbb F}(\Omega;C([t,T];\mathbb R^n)) \times L^2_{\mathbb F}(t,T;\mathbb R^{nd})$. Moreover, we have the following estimate:
\begin{equation}\label{2:BSDE:Est1}
\mathbb E_t \bigg[ \sup_{s\in [t,T]} |y(s)|^2 +\int_t^T |z(s)|^2\, \mathrm ds \bigg] \leq K \mathbb E_t\bigg[ |y_T|^2 +\bigg( \int_t^T \big| g\big( s,0,0,0,0 \big)  \big|\, \mathrm ds \bigg)^2 \bigg],
\end{equation}
where $K := K(T-t,L)>0$ is a constant depending on $(T-t)$ and the Lipschitz constant of the mapping $g$. Furthermore, let $(\bar y_t, \bar g)$ be another set of coefficients, and assume that $(\bar y(\cdot),\bar z(\cdot))\in L^2_{\mathbb F}(\Omega;C([t,T];\mathbb R^n)) \times L^2_{\mathbb F}(t,T;\mathbb R^{nd})$ is a solution to the MF-BSDE with $(\bar y_T, \bar g)$. We continue to assume that $\bar y_T\in L^2_{\mathcal F_T}(\Omega;\mathbb R^n)$ and $\bar g(\cdot,\bar y(\cdot),\mathbb E_t[\bar y(\cdot)],\bar z(\cdot),\mathbb E_t[\bar z(\cdot)]) \in L^2_{\mathbb F}(\Omega;L([t,T];\mathbb R^n))$. Then,
\begin{equation}\label{2:BSDE:Est2}
\begin{aligned}
& \mathbb E_t \bigg[ \sup_{s\in [t,T]} \big| y(s) -\bar y(s) \big|^2 +\int_t^T \big| z(s) -\bar z(s) \big|^2\, \mathrm ds \bigg] \leq K \mathbb E_t \bigg\{ \big| y_T -\bar y_T \big|^2\\
& +\bigg( \int_t^T \Big| g\big( s,\bar y(s), \mathbb E_t[\bar y(s)], \bar z(s), \mathbb E_t[\bar z(s)] \big) -\bar g\big( s,\bar y(s), \mathbb E_t[\bar y(s)], \bar z(s), \mathbb E_t[\bar z(s)] \big) \Big|\, \mathrm ds \bigg)^2 \bigg\},
\end{aligned}
\end{equation}
where $K$ is the same constant as in \eqref{2:BSDE:Est1}.
\end{proposition}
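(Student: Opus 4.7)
The plan is to first establish existence and uniqueness by a Picard-type contraction argument that reduces MF-BSDE \eqref{2:BSDE} to a family of standard (non mean-field) BSDEs, then derive the conditional estimate \eqref{2:BSDE:Est1} by applying It\^o's formula together with the tower property of conditional expectation and the conditional Burkholder-Davis-Gundy (BDG) inequality, and finally obtain the continuous dependence estimate \eqref{2:BSDE:Est2} as a corollary of \eqref{2:BSDE:Est1} applied to the BSDE satisfied by $(y(\cdot)-\bar y(\cdot),\, z(\cdot)-\bar z(\cdot))$.

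For existence and uniqueness, given $(\bar y(\cdot),\bar z(\cdot))\in L^2_{\mathbb F}(t,T;\mathbb R^n)\times L^2_{\mathbb F}(t,T;\mathbb R^{nd})$, I would freeze the mean-field slots and consider the driver $\tilde g(s,y,z):=g(s,y,\mathbb E_t[\bar y(s)],z,\mathbb E_t[\bar z(s)])$, which is uniformly Lipschitz in $(y,z)$ with the same constant $L$, and whose value at $(y,z)=(0,0)$ lies in $L^2_{\mathbb F}(\Omega;L([t,T];\mathbb R^n))$ thanks to the $L^2$-contractivity of $\mathbb E_t[\cdot]$. Classical BSDE theory (Pardoux-Peng) then yields a unique solution $(y(\cdot),z(\cdot))$ to the resulting standard BSDE, defining a map $\Pi:(\bar y,\bar z)\mapsto(y,z)$. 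Applying It\^o's formula to $e^{\beta s}|y-y'|^2$ for two inputs $(\bar y,\bar z)$ and $(\bar y',\bar z')$, together with $\mathbb E[|\mathbb E_t[\xi]|^2]\le\mathbb E[|\xi|^2]$ and Young's inequality, shows that for $\beta>0$ large enough (depending only on $L$), $\Pi$ is a strict contraction on $L^2_{\mathbb F}(t,T;\mathbb R^n)\times L^2_{\mathbb F}(t,T;\mathbb R^{nd})$ equipped with an equivalent weighted norm; the unique fixed point solves \eqref{2:BSDE}, and standard arguments promote $y(\cdot)$ to $L^2_{\mathbb F}(\Omega;C([t,T];\mathbb R^n))$.

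For the conditional estimate \eqref{2:BSDE:Est1}, I would apply It\^o's formula to $|y(s)|^2$ on $[s,T]$ to get
\[
|y(s)|^2+\int_s^T|z(r)|^2\,\mathrm dr=|y_T|^2+2\int_s^T\langle y(r),g\big(r,y(r),\mathbb E_t[y(r)],z(r),\mathbb E_t[z(r)]\big)\rangle\,\mathrm dr-2\int_s^T\langle y(r),z(r)\,\mathrm dW(r)\rangle,
\]
take $\mathbb E_t[\cdot]$ (after localization so that the stochastic integral is a true martingale on the stopped intervals), use the Lipschitz bound on $g$, the Jensen-type inequality $|\mathbb E_t[y(r)]|^2\le\mathbb E_t[|y(r)|^2]$ (and its analog for $z$), and Young's inequality $2|y||g|\le\varepsilon|z|^2+C_\varepsilon(|y|^2+|y|\cdot|g_0(r)|)$ with $g_0(r):=g(r,0,0,0,0)$, so that the $|z|^2$ term can be absorbed into the left-hand side. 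This yields a conditional Gronwall-type inequality for $\mathbb E_t[|y(s)|^2]$, from which the pointwise-in-$s$ bound on $\mathbb E_t[|y(s)|^2]+\mathbb E_t[\int_t^T|z(r)|^2\,\mathrm dr]$ follows. The conditional BDG inequality applied to $\sup_s|\int_s^T\langle y,z\,\mathrm dW\rangle|$ then upgrades this to the $\sup$-norm bound \eqref{2:BSDE:Est1}. The continuous dependence estimate \eqref{2:BSDE:Est2} is obtained by the same argument applied to the BSDE for $y-\bar y$, by splitting its driver as $g(\cdot,y,\ldots)-\bar g(\cdot,\bar y,\ldots)=[g(\cdot,y,\ldots)-g(\cdot,\bar y,\ldots)]+[g(\cdot,\bar y,\ldots)-\bar g(\cdot,\bar y,\ldots)]$; the first bracket is controlled by the Lipschitz constant of $g$ as before, and the second acts as an inhomogeneous term playing the role of $g_0$.

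The main technical obstacle will be to keep every estimate \emph{conditional} on $\mathcal F_t$ throughout the proof, rather than working with plain expectations, because the driver of \eqref{2:BSDE} itself involves $\mathbb E_t[\cdot]$. This forces care with two points: the conditional BDG inequality must be invoked rather than its unconditional counterpart, and the bound $|\mathbb E_t[y(r)]|^2\le\mathbb E_t[|y(r)|^2]$ must be applied \emph{before} extracting the $\mathbb E_t[y]$-dependence from the Lipschitz estimate, so that the resulting Gronwall inequality is closed in terms of $\mathbb E_t[|y(r)|^2]$ alone. Modulo this bookkeeping, the arguments are parallel in spirit to those used for Proposition \ref{2:Prop:SDE} and deferred to Appendix \ref{Sub:App1}.
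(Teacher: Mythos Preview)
Your proposal is correct, but the route differs from the paper's in two concrete ways.

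For existence and uniqueness, you set up a global contraction on $L^2_{\mathbb F}(t,T;\mathbb R^n)\times L^2_{\mathbb F}(t,T;\mathbb R^{nd})$ using the weighted norm $e^{\beta s}$; the paper instead (by analogy with its SDE proof) shows contraction on small subintervals of length $\delta_B$ and then pieces the solution together. For the a priori estimate \eqref{2:BSDE:Est1}, you redo the full It\^o--Young--Gronwall--BDG computation from scratch, keeping everything conditional on $\mathcal F_t$; the paper instead treats the classical (non mean-field) BSDE estimate as a black box on each subinterval $[t_k,t_{k+1}]$, uses the Lipschitz bound and conditional Jensen inequality only to control the extra mean-field terms, and then iterates the resulting local bound recursively across the $N_B$ subintervals to obtain the global one. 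Your approach is arguably more self-contained and avoids the subdivision bookkeeping (the paper's constant grows like $(6K_B)^{N_B}$, which is the price of the iteration), while the paper's approach maximally recycles the standard BSDE theory and only has to handle the mean-field perturbation. Both arguments agree on Step~3: the continuous dependence estimate \eqref{2:BSDE:Est2} follows by applying the a priori bound to the difference equation, exactly as you outline.
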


The proof of the above Proposition \ref{2:Prop:BSDE} can be found in Appendix \ref{Sub:App2}.

At the end of this section, we continue to introduce some other spaces which will be used in our following analysis. 
\begin{itemize}
\item $L^\infty(t,T;\mathbb R^{m\times n})$ is the set of all Lebesgue measurable functions $A:[t,T]\rightarrow \mathbb R^{m\times n}$ such that
\[
\Vert A(\cdot) \Vert_{L^\infty(t,T;\mathbb R^{m\times n})} := \esssup_{s\in [t,T]} \Vert A(s) \Vert <\infty.
\]
\end{itemize}
Some product spaces are also introduced: 
\begin{itemize}
\item $M^2_{\mathbb F}(t,T;\mathbb R^{n(2+d)}) := L^2_{\mathbb F}(\Omega;C([t,T];\mathbb R^n)) \times L^2_{\mathbb F}(\Omega;C([t,T];\mathbb R^n)) \times L^2_{\mathbb F}(t,T;\mathbb R^{nd})$. For any $\theta(\cdot) =(x(\cdot)^\top,y(\cdot)^\top,z(\cdot)^\top) \in M^2_{\mathbb F}(t,T;\mathbb R^{n(2+d)})$, its norm is given by
\[
\Vert \theta(\cdot) \Vert_{M^2_{\mathbb F}(t,T;\mathbb R^{n(2+d)})} := \bigg\{ \mathbb E\bigg[ \sup_{s\in [t,T]} |x(s)|^2 +\sup_{s\in [t,T]} |y(s)|^2 +\int_t^T |z(s)|^2\, \mathrm ds \bigg] \bigg\}^{1/2}.
\]
\item $\mathcal M^2_{\mathbb F}(t,T;\mathbb R^{n(2+d)}) := L^2_{\mathbb F}(\Omega;L([t,T];\mathbb R^n)) \times L^2_{\mathbb F}(\Omega;L([t,T];\mathbb R^n)) \times L^2_{\mathbb F}(t,T;\mathbb R^{nd})$. For any $\rho(\cdot) =(\varphi(\cdot)^\top, \psi(\cdot)^\top, \gamma(\cdot)^\top)^\top$, its norm is 
\[
\Vert \rho(\cdot) \Vert_{\mathcal M^2_{\mathbb F}(t,T;\mathbb R^{n(2+d)})} := \bigg\{ \mathbb E\bigg[ \bigg( \int_t^T |\varphi(s)|\, \mathrm ds \bigg)^2 +\bigg( \int_t^T |\psi(s)|\, \mathrm ds \bigg)^2 +\int_t^T |\gamma(s)|^2\, \mathrm ds \bigg] \bigg\}^{1/2}.
\]
\item $\mathcal H[t,T] := L^2_{\mathcal F_t}(\Omega;\mathbb R^n) \times L^2_{\mathcal F_T}(\Omega;\mathbb R^n) \times \mathcal M^2_{\mathbb F}(t,T;\mathbb R^{n(2+d)})$. For any $(\xi, \eta, \rho(\cdot)) \in \mathcal H[t,T]$, its norm is given by
\[
\Vert (\xi, \eta, \rho(\cdot)) \Vert_{\mathcal H[t,T]} := \Big\{ \Vert \xi \Vert_{L^2_{\mathcal F_t}(\Omega;\mathbb R^n)}^2 +\Vert \eta \Vert_{L^2_{\mathcal F_T}(\Omega;\mathbb R^n)}^2 +\Vert \rho(\cdot) \Vert_{\mathcal M^2_{\mathbb F}(t,T;\mathbb R^{n(2+d)})}^2 \Big\}^{1/2}.
\]
\end{itemize}

\section{MF-FBSDEs with domination-monotonicity conditions}\label{Sec:FBSDE}

In this section, we devote ourselves to investigating MF-FBSDE \eqref{1:FBSDE}. Similar to MF-SDEs and MF-BSDEs, we introduce the following assumptions on the coefficients $(\Psi,\Phi,\Gamma)$:

\medskip

\noindent {\bf Assumption (H3).} (i) For any $y\in \mathbb R^n$, $\Psi(y)$ is $\mathcal F_t$-measurable. For any $x, x' \in \mathbb R^n$, $\Phi(x,x')$ is $\mathcal F_T$-measurable. For any $\theta, \theta' \in \mathbb R^{n(2+d)}$, $\Gamma(\cdot,\theta,\theta')$ is $\mathbb F$-progressively measurable. Moreover, $(\Psi(0), \Phi(0,0), \Gamma(\cdot,0,0))\in \mathcal H[t,T]$.

\smallskip

\noindent (ii) The mappings $\Psi$, $\Phi$ and $\Gamma$ are uniformly Lipschitz continuous with respect to $y$, $(x,x')$ and $(\theta,\theta')$ respectively, i.e., the exists a constant $L>0$ such that
\[
\left\{
\begin{aligned}
& \big| \Psi(y) -\Psi(\bar y) \big| \leq L |y-\bar y|,\\
& \big| \Phi( x, x' ) -\Phi( \bar x, \bar x' ) \big| \leq L\big( |x-\bar x| +|x'-\bar x'| \big),\\
& \big| \Gamma(s,\theta,\theta') -\Gamma(s,\bar\theta,\bar\theta') \big| \leq L\big( |\theta-\bar\theta| +|\theta'-\bar\theta'| \big),
\end{aligned}
\right.
\]
for any $y,\bar y,x,\bar x,x',\bar x' \in \mathbb R^n$, any $\theta, \bar\theta, \theta', \bar\theta' \in \mathbb R^{n(2+d)}$ and almost all $(s,\omega) \in [t,T] \times \Omega$.

\medskip

Besides the above Assumption (H3), the following domination-monotonicity conditions are also imposed:

\medskip

\noindent{\bf Assumption (H4).} There exist two constants $\mu\geq 0$, $\nu\geq 0$, three matrices $H\in \mathbb R^{m_1\times n}$, $P, \widetilde P \in \mathbb R^{m_2\times n}$, and six matrix-valued processes $A(\cdot), \widetilde A(\cdot), B(\cdot), \widetilde B(\cdot) \in L^\infty(t,T;\mathbb R^{m_3\times n})$, $C(\cdot) = (C_1(\cdot),C_2(\cdot),\dots,C_d(\cdot))$, $\widetilde C(\cdot) =(\widetilde C_1(\cdot), \widetilde C_2(\cdot),\dots, \widetilde C_d(\cdot))$ with $C_i(\cdot), \widetilde C_i(\cdot) \in L^\infty(t,T;\mathbb R^{m_3\times n})$ (where $m_1, m_2, m_3 \in \mathbb N$ are given) such that the following conditions hold:

\smallskip

\noindent (i) One of the following two cases holds. {\it Case A}: $\mu>0$ and $\nu=0$. {\it Case B}: $\mu =0$ and $\nu>0$.

\smallskip

\noindent (ii) (Domination conditions). For any $x, \bar x, x', \bar x', y, \bar y, y', \bar y' \in \mathbb R^n$, any $z, \bar z, z', \bar z' \in \mathbb R^{nd}$, and almost all $(s,\omega) \in [t,T]\times \Omega$, with the notations $\widehat x =x-\bar x$, $\widehat x' =x' -\bar x'$, $\widehat y =y-\bar y$, $\widehat y' =y' -\bar y'$, $\widehat z =z-\bar z$, $\widehat z' =z' -\bar z'$, and $f=b,\sigma$, we have 
\begin{equation}\label{3:D}
\left\{
\begin{aligned}
& |\Psi(y) -\Psi(\bar y)| \leq \frac 1 \mu | H\widehat y |,\\
& \big|\Phi(x,x') -\Phi(\bar x, \bar x')\big| \leq \frac 1 \nu \left| \begin{pmatrix} P(\widehat x -\widehat x')\\ \widetilde P \widehat x' \end{pmatrix} \right|,\\
& \big|g(s,x,x',y,y',z,z') -g(s,\bar x,\bar x',y,y',z,z')\big| \leq \frac 1 \nu \left| \begin{pmatrix} A(s) (\widehat x -\widehat x')\\ \widetilde A(s) \widehat x' \end{pmatrix} \right|,\\
& \big| f(s,x,x',y,y',z,z') -f(s,x,x',\bar y,\bar y',\bar z,\bar z') \big| \leq \frac 1 \mu \left| \begin{pmatrix}
B(s)(\widehat y -\widehat y') +C(s)(\widehat z -\widehat z')\\
\widetilde B(s) \widehat y' +\widetilde C(s) \widehat z'
\end{pmatrix} \right|.
%& \left| \begin{pmatrix} 
%b(s,x,x',y,y',z,z') -b(s,x,x',\bar y,\bar y',\bar z,\bar z')\\
%\sigma(s,x,x',y,y',z,z') -\sigma(s,x,x',\bar y,\bar y',\bar z,\bar z')
%\end{pmatrix} \right| \leq \frac 1 \mu \left| \begin{pmatrix}
%B(s)(\widehat y -\widehat y') +C(s)(\widehat z -\widehat z')\\
%\widetilde B(s) \widehat y' +\widetilde C(s) \widehat z'
%\end{pmatrix} \right|.
\end{aligned}
\right.
\end{equation}
Here, we have a bit of abusive notations, i.e., when $\mu =0$ (resp. $\nu=0$), $1/\mu$ (resp. $1/\nu$) means $+\infty$. In other words, if $\mu =0$ or $\nu=0$, the corresponding domination conditions will vanish.

\smallskip

\noindent (iii) (Monotonicity conditions). For any $y, \bar y \in \mathbb R^n$ and almost all $\omega \in \Omega$,
\begin{equation}\label{3:M:Psi}
\langle \Psi(y) -\Psi(\bar y),\ \widehat y \rangle \leq -\mu |H\widehat y|^2.
\end{equation}
For any random variable $X$, we use the notations 
\begin{equation}\label{3:RV:Dec}
X^{(1)}:= X -\mathbb E_t[X]\quad \mbox{and}\quad X^{(2)} := \mathbb E_t[X]. 
\end{equation}
For any $X, \bar X \in L^2_{\mathcal F_T}(\Omega;\mathbb R^n)$,
\begin{equation}\label{3:M:Phi}
\mathbb E_t\left[ \left\langle \begin{pmatrix} 
\Phi\big( X, \mathbb E_t[X] \big)^{(1)} -\Phi\big( \bar X, \mathbb E_t[\bar X] \big)^{(1)}\\ 
\Phi\big( X, \mathbb E_t[X] \big)^{(2)} -\Phi\big( \bar X, \mathbb E_t[\bar X] \big)^{(2)}
\end{pmatrix},\ 
\begin{pmatrix}
\widehat X^{(1)}\\ \widehat X^{(2)}
\end{pmatrix} \right\rangle \right]
\geq \nu \mathbb E_t\left[ \left| \begin{pmatrix} 
P\widehat X^{(1)}\\ \widetilde P\widehat X^{(2)}
\end{pmatrix} \right|^2 \right].
\end{equation}
For almost all $s\in [t,T]$ and any $\Theta := (X^\top,Y^\top,Z^\top)^\top$, $\bar\Theta := (X^\top,Y^\top,Z^\top)^\top \in L^2_{\mathcal F_s}(\Omega;\mathbb R^{n(2+d)})$,
\begin{equation}\label{3:M:Gamma}
\begin{aligned}
& \mathbb E_t\left[ \left\langle \begin{pmatrix} 
\Gamma\big(s, \Theta, \mathbb E_t[\Theta] \big)^{(1)} -\Gamma\big(s, \bar \Theta, \mathbb E_t[\bar \Theta] \big)^{(1)}\\ 
\Gamma\big(s, \Theta, \mathbb E_t[\Theta] \big)^{(2)} -\Gamma\big(s, \bar \Theta, \mathbb E_t[\bar \Theta] \big)^{(2)}
\end{pmatrix},\ 
\begin{pmatrix}
\widehat \Theta^{(1)}\\ \widehat \Theta^{(2)}
\end{pmatrix} \right\rangle \right]\\
\leq\ & -\nu \mathbb E_t\left[ \left| \begin{pmatrix} 
A(s)\widehat X^{(1)}\\ \widetilde A(s)\widehat X^{(2)}
\end{pmatrix} \right|^2 \right] -\mu \mathbb E_t\left[ \left| \begin{pmatrix} 
B(s)\widehat Y^{(1)} +C(s)\widehat Z^{(1)}\\ 
\widetilde B(s)\widehat Y^{(2)} +\widetilde C(s) \widehat Z^{(2)}
\end{pmatrix} \right|^2 \right].
\end{aligned}
\end{equation}

\medskip

The domination and monotonicity conditions in Assumption (H4), especially \eqref{3:M:Phi} and \eqref{3:M:Gamma} in some form of randomization, are a bit complicated and not easy to understand. Therefore, we would like to give a remark and two examples to get some feeling. Firstly, the following remark shows a special case of Assumption (H4).

\begin{remark}
We introduce 

\medskip

\noindent{\bf Assumption (H4-S)} {\rm (Coefficients without mean-field terms)}. Let the coefficients $\Phi$ and $\Gamma$ are independent of $x'$ and $\theta'$, respectively. There exist two constants $\mu\geq 0$, $\nu\geq 0$, two matrices $H\in \mathbb R^{m_1\times n}$, $P \in \mathbb R^{m_2\times n}$, and three matrix-valued processes $A(\cdot), B(\cdot), C(\cdot) \in L^\infty(t,T;\mathbb R^{m_3\times n})$ (where $m_1, m_2, m_3 \in \mathbb N$ are given) such that the following conditions hold:

\smallskip

\noindent (i) One of the following two cases holds. {\it Case A}: $\mu>0$ and $\nu=0$. {\it Case B}: $\mu =0$ and $\nu>0$.

\smallskip

\noindent (ii) (Domination conditions). For any $x, \bar x, y, \bar y \in \mathbb R^n$, any $z, \bar z \in \mathbb R^{nd}$, and almost all $(s,\omega) \in [t,T]\times \Omega$, we have 
\begin{equation}
\left\{
\begin{aligned}
& |\Psi(y) -\Psi(\bar y)| \leq \frac 1 \mu | H\widehat y |,\\
& \big|\Phi(x) -\Phi(\bar x)\big| \leq \frac 1 \nu |P\widehat x|,\\
& \big|g(s,x,y,z) -g(s,\bar x,y,z)\big| \leq \frac 1 \nu |A(s)\widehat x|,\\
& \big| f(s,x,y,z) -f(s,x,\bar y,\bar z) \big| \leq \frac 1 \mu \big|B(s)\widehat y +C(s) \widehat z\big|.
%& \left| \begin{pmatrix} 
%b(s,x,y,z) -b(s,x,\bar y,\bar z)\\
%f(s,x,y,z) -f(s,x,\bar y,\bar z)
%\end{pmatrix} \right| \leq \frac 1 \mu \big|B(s)\widehat y +C(s) \widehat z\big|.
\end{aligned}
\right.
\end{equation}

\smallskip

\noindent (iii) (Monotonicity conditions). For any $x, \bar x, y, \bar y \in \mathbb R^n$, any $\theta, \bar\theta \in \mathbb R^{n(2+d)}$, and almost all $(s,\omega) \in [t,T] \times \Omega$, we have
\begin{equation}
\left\{
\begin{aligned}
& \langle \Psi(y) -\Psi(\bar y),\ \widehat y \rangle \leq -\mu |H\widehat y|^2,\\
& \langle \Phi(x) -\Phi(\bar x),\ \widehat x \rangle \geq \nu |P\widehat x|^2,\\
& \langle \Gamma(s,\theta) -\Gamma(s, \bar\theta),\ \widehat \theta \rangle \leq -\nu |A(s) \widehat x|^2 -\mu \big|B(s)\widehat y +C(s) \widehat z\big|^2.
\end{aligned}
\right.
\end{equation}

\medskip

A straightforward verification shows that the above Assumption (H4-S) implies Assumption (H4). We notice that Assumption (H4-S) and its special cases have been extensively studied in the literature, such as \cite{HP-95, Yong-97, PW-99, Y-21+} and so on. The present paper can be regarded as an extension of these studies in the mean-field case.
\end{remark}

Now, we give an example of a decoupled linear MF-FBSDE where the coefficients satisfy Assumptions (H3) and (H4).

\begin{example}\label{3:Exa:0}
Let $\mu \geq 0$ and $\nu\geq 0$ be two constants satisfying Assumption (H4)-(i), $H\in \mathbb R^{m_1\times n}$, $P, \widetilde P \in \mathbb R^{m_2\times n}$ be three matrices, and $A(\cdot), \widetilde A(\cdot), B(\cdot), \widetilde B(\cdot) \in L^\infty(t,T;\mathbb R^{m_3\times n})$, $C(\cdot) = (C_1(\cdot),C_2(\cdot),\dots,C_d(\cdot))$, $\widetilde C(\cdot) =(\widetilde C_1(\cdot), \widetilde C_2(\cdot),\dots, \widetilde C_d(\cdot))$ with $C_i(\cdot), \widetilde C_i(\cdot) \in L^\infty(t,T;\mathbb R^{m_3\times n})$ be six matrix-valued processes. For any $y, x, x'\in \mathbb R^n$, any $\theta,\theta' \in \mathbb R^{n(2+d)}$ and any $(s,\omega) \in [t,T]\times \Omega$, we define
\begin{equation}\label{3:Coe:0}
\left\{
\begin{aligned}
& \Psi^0(y) = -\mu H^\top Hy,\\
& \Phi^0(x,x') = \nu \Big\{ P^\top P(x-x') +\widetilde P^\top \widetilde P x' \Big\},\\
& g^0(s,\theta,\theta') = -\nu \Big\{ A(s)^\top A(s) (x-x') +\widetilde A(s)^\top\widetilde A(s) x' \Big\},\\
& b^0(s,\theta,\theta') = -\mu \Big\{ B(s)^\top \Big[ B(s)(y-y') +C(s)(z-z') \Big] +\widetilde B(s)^\top \Big[ \widetilde B(s) y' +\widetilde C(s) z' \Big] \Big\},\\
& \sigma^0(s,\theta,\theta') = -\mu \Big\{ C(s)^\top \Big[ B(s)(y-y') +C(s)(z-z') \Big] +\widetilde C(s)^\top \Big[ \widetilde B(s) y' +\widetilde C(s) z' \Big] \Big\}.
\end{aligned}
\right.
\end{equation}
Similarly, we denote $\Gamma^0 :=((g^0)^\top, (b^0)^\top, (\sigma^0)^\top)^\top$. When Case A in Assumption (H4)-(i) holds, we assume that
\begin{equation}\label{3:WLOG1}
\frac{1}{\mu^2} \geq \max\left\{ \Vert H \Vert,\ 
\left\Vert \begin{pmatrix} B(\cdot) \\ \widetilde B(\cdot) \end{pmatrix} \right\Vert_{L^\infty(t,T;\mathbb R^{2m_3\times n})},\
\left\Vert \begin{pmatrix} C(\cdot)\\ \widetilde C(\cdot) \end{pmatrix} \right\Vert_{L^\infty(t,T;\mathbb R^{2m_3\times nd})}
\right\}.
\end{equation}
Then Assumptions (H4)-(ii) and (H4)-(iii) also hold for $(\Psi^0, \Phi^0, \Gamma^0)$ which are defined by \eqref{3:Coe:0}. Moreover, in this case, Assumption (H3) holds for $(\Psi^0, \Phi^0, \Gamma^0)$ with the Lipschitz constant
\begin{equation}\label{3:WLOG2}
L \geq \mu \max \Big\{ \Vert H \Vert^2,\
\left\Vert \big( B(\cdot),\ C(\cdot) \big) \right\Vert_{L^\infty(t,T;\mathbb R^{m_3\times n(1+d)})}^2
+\left\Vert \big( \widetilde B(\cdot),\ \widetilde C(\cdot) \big) \right\Vert_{L^\infty(t,T;\mathbb R^{m_3\times n(1+d)})}^2 \Big\}.
\end{equation}
When Case B in Assumption (H4)-(i) holds, we assume that
\begin{equation}\label{3:WLOG3}
\frac{1}{\nu^2} \geq \max\left\{ 
\left\Vert \begin{pmatrix} P\\ \widetilde P \end{pmatrix} \right\Vert,\
\left\Vert \begin{pmatrix} A(\cdot)\\ \widetilde A(\cdot) \end{pmatrix} \right\Vert_{L^\infty(t,T;\mathbb R^{2m_3\times n})}
\right\}.
\end{equation}
Then Assumptions (H4)-(ii) and (H4)-(iii) also hold. Moreover, in this case, Assumption (H3) holds with the Lipschitz constant
\begin{equation}\label{3:WLOG4}
L \geq \nu \max\Big\{ \Vert P \Vert^2 +\Vert \widetilde P \Vert^2,\
\left\Vert A(\cdot) \right\Vert_{L^\infty(t,T;\mathbb R^{m_3\times n})}^2 +\left\Vert \widetilde A(\cdot) \right\Vert_{L^\infty(t,T;\mathbb R^{m_3\times n})}^2
\Big\}.
\end{equation}
\end{example}

The above Example \ref{3:Exa:0} will be useful in our following analysis of this section. Another two coupled linear examples coming from the Hamiltonian systems of LQ control problems will be provided in Section \ref{Sec:LQ}. Next, we shall give a nonlinear example.

\begin{example}
Let $n=d=1$. Let $k_1\geq 1$ and $k_2\geq 1$ be two constants. For any $y,x,x'\in \mathbb R$ and any $\theta,\theta' \in \mathbb R^3$, we define
\[
\left\{
\begin{aligned}
& \Psi(y) = -k_1y +\sin y, \quad 
b(\theta,\theta') \equiv b(y,y') = -k_1y +\sin y', \quad
\sigma(\theta,\theta') \equiv \sigma(z,z') =-k_1z +\sin z',\\
& \Phi(x,x') = k_2 x +\sin x', \quad g(\theta,\theta') \equiv g(x,x') = -k_2x +\sin x'.
\end{aligned}
\right.
\]
Let $H=P=\widetilde P =1$ and 
\[
A = \widetilde A =\begin{pmatrix} 1 \\ 0\\ 0 \end{pmatrix},\qquad
B = \widetilde B =\begin{pmatrix} 0 \\ 1\\ 0 \end{pmatrix},\qquad
C = \widetilde C =\begin{pmatrix} 0 \\ 0\\ 1 \end{pmatrix}.
\]

With the same notations in Assumption (H4), we firstly try to derive the monotonicity conditions:
\[
\begin{aligned}
& \left\langle \begin{pmatrix} 
\Gamma\big(s, \Theta, \mathbb E_t[\Theta] \big)^{(1)} -\Gamma\big(s, \bar \Theta, \mathbb E_t[\bar \Theta] \big)^{(1)}\\ 
\Gamma\big(s, \Theta, \mathbb E_t[\Theta] \big)^{(2)} -\Gamma\big(s, \bar \Theta, \mathbb E_t[\bar \Theta] \big)^{(2)}
\end{pmatrix},\ 
\begin{pmatrix}
\widehat \Theta^{(1)}\\ \widehat \Theta^{(2)}
\end{pmatrix} \right\rangle \\
\leq\ & -k_2 \big| \widehat X^{(1)} \big|^2 -k_1\Big[ \big| \widehat Y^{(1)} \big|^2 +\big| \widehat Z^{(1)} \big|^2 \Big]
-(k_2-1)\big| \widehat X^{(2)} \big|^2 -(k_1-1)\Big[ \big| \widehat Y^{(2)} \big|^2 +\big| \widehat Z^{(2)} \big|^2 \Big]\\
\leq\ & -(k_2-1) \Big[ \big| \widehat X^{(1)} \big|^2 +\big| \widehat X^{(2)} \big|^2 \Big] -(k_1-1)\Big[ \widehat Y^{(1)} \big|^2 +\big| \widehat Z^{(1)} \big|^2 +\big| \widehat Y^{(2)} \big|^2 +\big| \widehat Z^{(2)} \big|^2 \Big]\\
=\ & -(k_2-1) \left| \begin{pmatrix} A\widehat X^{(1)}\\ \widetilde A \widehat X^{(2)} \end{pmatrix} \right|^2 
-(k_1-1) \left| \begin{pmatrix} B\widehat Y^{(1)} +C\widehat Z^{(1)}\\ \widetilde B\widehat Y^{(2)} +\widetilde C\widehat Z^{(2)} \end{pmatrix} \right|^2.
\end{aligned}
\]
Similarly,
\[
\big\langle \Psi(y) -\Psi(\bar y), \widehat y \big\rangle \leq -(k_1-1)|H\widehat y|^2,
\]
\[
\left\langle \begin{pmatrix} 
\Phi\big( X, \mathbb E_t[X] \big)^{(1)} -\Phi\big( \bar X, \mathbb E_t[\bar X] \big)^{(1)}\\ 
\Phi\big( X, \mathbb E_t[X] \big)^{(2)} -\Phi\big( \bar X, \mathbb E_t[\bar X] \big)^{(2)}
\end{pmatrix},\ 
\begin{pmatrix}
\widehat X^{(1)}\\ \widehat X^{(2)}
\end{pmatrix} \right\rangle \geq (k_2-1) \left| \begin{pmatrix} P\widehat X^{(1)}\\ \widetilde P\widehat X^{(2)} \end{pmatrix} \right|^2.
\]
Secondly, we try to calculate the domination conditions:
\[
\left\{
\begin{aligned}
& |\Psi(y) -\Psi(\bar y)| \leq (k_1+1)|H\widehat y|,\\
& \big|\Phi(x,x') -\Phi(\bar x, \bar x')\big| \leq 2(k_2+1) \left| \begin{pmatrix} P(\widehat x -\widehat x')\\ \widetilde P \widehat x' \end{pmatrix} \right|,\\
& \big|g(x,x') -g(\bar x,\bar x')\big| \leq 2(k_2+1) \left| \begin{pmatrix} A (\widehat x -\widehat x')\\ \widetilde A \widehat x' \end{pmatrix} \right|,\\
& \big| b(y,y') -b(\bar y,\bar y') \big| + \big| \sigma(z,z') -\sigma(\bar z,\bar z') \big| \leq 2(k_1 +1) \left| \begin{pmatrix}
B(\widehat y -\widehat y') +C(\widehat z -\widehat z')\\
\widetilde B \widehat y' +\widetilde C \widehat z'
\end{pmatrix} \right|.
\end{aligned}
\right.
\]

Based on the above calculation, we get the following results. On the one hand, when $k_1>1$ and $k_2\geq 1$, selecting $0<\mu \leq \min \{ k_1-1,\ 1/(2(k_1+1)) \}$ and $\nu=0$ leads to Assumptions (H4)-(i)-Case A, (H4)-(ii) and (H4)-(iii). On the other hand, when $k_1\geq 1$ and $k_2>1$, we choose $\mu=0$ and $0<\nu\leq \min \{ k_2-1,\ 1/(2(k_2 +1)) \}$ to get Assumptions (H4)-(i)-Case B, (H4)-(ii) and (H4)-(iii).
\end{example}

Now, we are in the position to give the main results of this section.

\begin{theorem}\label{3:THM:FBSDE}
Let Assumptions (H3) and (H4) hold for a set of coefficients $(\Psi, \Phi, \Gamma)$. Then MF-FBSDE \eqref{1:FBSDE} admits a unique solution $\theta(\cdot) \in M^2_{\mathbb F}(t,T;\mathbb R^{n(2+d)})$. Moreover, the following estimate holds:
\begin{equation}\label{3:FBSDE:Est1}
\mathbb E_t\bigg[ \sup_{s\in [t,T]}|x(s)|^2 +\sup_{s\in [t,T]}|y(s)|^2 +\int_t^T |z(s)|^2\, \mathrm ds \bigg] \leq K \mathbb E_t\big[ \mathrm I \big],
\end{equation}
where
\begin{equation}
\mathrm I = |\Psi(0)|^2 +|\Phi(0,0)|^2 +\bigg( \int_t^T |g(s,0,0)|\, \mathrm ds \bigg)^2 +\bigg( \int_t^T |b(s,0,0)|\, \mathrm ds \bigg)^2 +\int_t^T |\sigma(s,0,0)|^2\, \mathrm ds,
\end{equation}
and $K>0$ is a constant depending on $(T-t)$, the Lipschitz constant, $\mu$, $\nu$, and the bound of all $H$, $P$, $\widetilde P$, $A(\cdot)$, $\widetilde A(\cdot)$, $B(\cdot)$, $\widetilde B(\cdot)$, $C(\cdot)$, $\widetilde C(\cdot)$. Furthermore, Let $(\bar\Psi, \bar\Phi, \bar\Gamma)$ be another set of coefficients and $\bar\theta(\cdot) \in M^2_{\mathbb F}(t,T;\mathbb R^{(n+2)d})$ be a solution to the MF-FBSDE with the coefficients $(\bar\Psi, \bar\Phi, \bar\Gamma)$. We assume that $(\bar\Psi(\bar y(t)), \bar\Phi(\bar x(T),\mathbb E_t[\bar x(T)]), \bar\Gamma(\cdot, \bar\theta(\cdot),\mathbb E_t[\bar\theta(\cdot)])) \in \mathcal H[t,T]$. Then, we have
\begin{equation}\label{3:FBSDE:Est2}
\mathbb E_t\bigg[ \sup_{s\in [t,T]}|\widehat x(s)|^2 +\sup_{s\in [t,T]}|\widehat y(s)|^2 +\int_t^T |\widehat z(s)|^2\, \mathrm ds \bigg] \leq K \mathbb E_t\big[ \widehat {\mathrm I} \big],
\end{equation}
where $\widehat x(\cdot) =x(\cdot) -\bar x(\cdot)$, $\widehat y(\cdot) =y(\cdot) -\bar y(\cdot)$, $\widehat z(\cdot) =z(\cdot) -\bar z(\cdot)$,
\begin{equation}
\begin{aligned}
\widehat {\mathrm I} 
=\ & \big| \Psi(\bar y(t)) -\bar \Psi(\bar y(t)) \big|^2
+ \big| \Phi\big( \bar x(T), \mathbb E_t[\bar x(T)] \big) -\bar \Phi\big( \bar x(T), \mathbb E_t[\bar x(T)] \big) \big|^2\\
& +\bigg( \int_t^T \big| g\big( s,\bar\theta(s), \mathbb E_t[\bar\theta(s)] \big) -\bar g\big( s,\bar\theta(s), \mathbb E_t[\bar\theta(s)] \big) \big|\, \mathrm ds \bigg)^2\\
& +\bigg( \int_t^T \big| b\big( s,\bar\theta(s), \mathbb E_t[\bar\theta(s)] \big) -\bar b\big( s,\bar\theta(s), \mathbb E_t[\bar\theta(s)] \big) \big|\, \mathrm ds \bigg)^2\\
& +\int_t^T \big| \sigma \big( s,\bar\theta(s), \mathbb E_t[\bar\theta(s)] \big) -\bar \sigma \big( s,\bar\theta(s), \mathbb E_t[\bar\theta(s)] \big) \big|^2\, \mathrm ds,
\end{aligned}
\end{equation}
and $K$ is the same constant as in \eqref{3:FBSDE:Est1}.
\end{theorem}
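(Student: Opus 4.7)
The plan is to establish Theorem \ref{3:THM:FBSDE} by adapting the \emph{method of continuation} of Hu--Peng, Yong, and Peng--Wu to the mean-field setting. For $\alpha \in [0,1]$, I would introduce the interpolating coefficients $\Psi^\alpha := \alpha\Psi + (1-\alpha)\Psi^0$, $\Phi^\alpha := \alpha\Phi + (1-\alpha)\Phi^0$, and $\Gamma^\alpha := \alpha\Gamma + (1-\alpha)\Gamma^0$, where $(\Psi^0,\Phi^0,\Gamma^0)$ are the linear coefficients of Example \ref{3:Exa:0}, and consider the $\alpha$-parametrized MF-FBSDE with additional driving data $(\xi,\eta,\rho(\cdot)) \in \mathcal H[t,T]$ appended to the initial, terminal, and integral coefficients. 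The goal is to show that the set $\mathcal A \subset [0,1]$ of parameters for which the problem is solvable for every data in $\mathcal H[t,T]$ is non-empty, and by a uniform continuation step jumps of size $\delta > 0$, eventually reaches $\alpha = 1$.

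The \textbf{base case} $\alpha = 0$ is handled by solving the Example \ref{3:Exa:0} system in two decoupled stages. In Case A one has $\Phi^0 = 0$ and $g^0 = 0$, so the backward part is a standard MF-BSDE solvable by Proposition \ref{2:Prop:BSDE}, after which the forward part becomes an MF-SDE in $x$ alone, solvable by Proposition \ref{2:Prop:SDE}; Case B is symmetric. The \textbf{continuation step} hinges on a uniform \emph{a priori estimate}: given two solutions $\theta,\bar\theta$ of the $\alpha$-problem associated with different data, I apply It\^o's formula to $\langle \widehat y(s), \widehat x(s)\rangle$ on $[t,T]$, take $\mathbb E_t$, and use $y(T) - \bar y(T) = \Phi^\alpha(\ldots) - \bar\Phi^\alpha(\ldots) + \Delta\eta$ and $x(t) - \bar x(t) = \Psi^\alpha(\ldots) - \bar\Psi^\alpha(\ldots) + \Delta\xi$ at the boundaries, together with the randomized monotonicity conditions \eqref{3:M:Psi}--\eqref{3:M:Gamma}, to produce an inequality of the schematic form
\begin{equation*}
\mu\, \mathbb E_t\!\!\int_t^T \!\!\left|\begin{pmatrix}B\widehat Y^{(1)}+C\widehat Z^{(1)}\\ \widetilde B\widehat Y^{(2)}+\widetilde C\widehat Z^{(2)}\end{pmatrix}\right|^2\! ds + \nu\,\mathbb E_t\!\!\int_t^T\!\!\left|\begin{pmatrix}A\widehat X^{(1)}\\ \widetilde A\widehat X^{(2)}\end{pmatrix}\right|^2\! ds + \mu|H\widehat y(t)|^2 + \nu \mathbb E_t\!\left|\begin{pmatrix}P\widehat X^{(1)}(T)\\ \widetilde P\widehat X^{(2)}(T)\end{pmatrix}\right|^2 \!\!\leq C\,\mathbb E_t[\widehat{\mathrm I}].
\end{equation*}
Feeding this into the SDE/BSDE estimates of Propositions \ref{2:Prop:SDE} and \ref{2:Prop:BSDE} (where the right-hand sides involve precisely the quantities controlled by the domination conditions \eqref{3:D}) yields $\|\theta-\bar\theta\|_{M^2_{\mathbb F}}^2 \leq C\,\mathbb E[\widehat{\mathrm I}]$ with a constant independent of $\alpha$.

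With this a priori estimate in hand, the \textbf{continuation step} becomes a contraction argument: assuming solvability at $\alpha_0$, define for each candidate $\bar\theta \in M^2_{\mathbb F}$ the element $\theta := T_{\alpha_0,\delta}(\bar\theta)$ as the $\alpha_0$-solution with data augmented by $\delta$ times the nonlinear increment $(\Psi-\Psi^0)(\bar y(t))$, $(\Phi-\Phi^0)(\bar x(T),\mathbb E_t[\bar x(T)])$, and $(\Gamma-\Gamma^0)(\cdot,\bar\theta,\mathbb E_t[\bar\theta])$. The Lipschitz property of the original coefficients (Assumption (H3)) combined with the uniform a priori estimate produces a contraction coefficient proportional to $\delta$, so for $\delta_0 > 0$ small and universal the map is a strict contraction; its fixed point solves the $(\alpha_0+\delta)$-problem for any data. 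Iterating finitely many times reaches $\alpha = 1$, giving existence and uniqueness at $\alpha = 1$ for arbitrary data in $\mathcal H[t,T]$; choosing the zero data recovers the estimate \eqref{3:FBSDE:Est1}, and comparing the $(\Psi,\Phi,\Gamma)$- and $(\bar\Psi,\bar\Phi,\bar\Gamma)$-problems yields \eqref{3:FBSDE:Est2}.

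The principal obstacle will be the interplay between It\^o's formula, the conditional expectation $\mathbb E_t[\cdot]$, and the mean-field terms in the coefficients. The randomization built into \eqref{3:M:Phi}--\eqref{3:M:Gamma} is tailor-made for this: splitting $\widehat X = \widehat X^{(1)} + \widehat X^{(2)}$ per \eqref{3:RV:Dec} into a conditionally centered piece and an $\mathcal F_t$-measurable piece, and exploiting the fact that $\mathbb E_t$ of a stochastic integral vanishes, allows the monotonicity bounds to absorb exactly the cross-terms produced when one pairs forward and backward increments that involve $\mathbb E_t[\theta]$. Crucially, this manipulation requires being able to pull the matrices $A,\widetilde A,B,\widetilde B,C,\widetilde C,H,P,\widetilde P$ through $\mathbb E_t[\cdot]$, which forces the deterministic assumption on them recorded in Assumption (H4); the decomposition technique of Yong \cite{Yong-13} will therefore play a central role, just as the randomization does, in closing the a priori estimate.
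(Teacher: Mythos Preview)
Your proposal is correct and follows essentially the same route as the paper: the method of continuation with interpolated coefficients $(\Psi^\alpha,\Phi^\alpha,\Gamma^\alpha)$, the decoupled base case $\alpha=0$ (Propositions \ref{2:Prop:SDE}--\ref{2:Prop:BSDE}), the a~priori estimate via It\^o's formula on $\langle \widehat x,\widehat y\rangle$ combined with the randomized monotonicity and domination conditions, and the contraction-based continuation step are exactly the ingredients of Lemmas \ref{3:Lem:a priori} and \ref{3:Lem:Con}. One small ordering point: the paper derives \eqref{3:FBSDE:Est2} first (by choosing $\alpha=1$, $(\xi,\eta,\rho)=0$, and $(\bar\xi,\bar\eta,\bar\rho)$ equal to the coefficient differences) and then specializes to $(\bar\Psi,\bar\Phi,\bar\Gamma)=0$ to obtain \eqref{3:FBSDE:Est1}, rather than the reverse.
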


Next, we are going to prove Theorem \ref{3:THM:FBSDE} by virtue of the method of continuation. Let us introduce the convex combination of $(\Psi, \Phi, \Gamma)$ and $(\Psi^0, \Phi^0, \Gamma^0)$ (see \eqref{3:Coe:0}):
\begin{equation}\label{3:Coe:alpha}
\big( \Psi^\alpha, \Phi^\alpha, \Gamma^\alpha \big) := \alpha \big( \Psi, \Phi, \Gamma \big) +(1-\alpha) \big( \Psi^0, \Phi^0, \Gamma^0 \big),\quad \alpha\in [0,1].
\end{equation}
For any $(\xi,\eta,\rho(\cdot)) \in \mathcal H[t,T]$ with $\rho(\cdot) =(\varphi(\cdot)^\top,\psi(\cdot)^\top,\gamma(\cdot)^\top)^\top$ and $\gamma(\cdot) = (\gamma_1(\cdot)^\top,\gamma_2(\cdot)^\top,\dots,\gamma_d(\cdot)^\top)^\top$, we continue to introduce a family of MF-FBSDEs parameterized by $\alpha\in [0,1]$ as follows:
\begin{equation}\label{3:FBSDE:alpha}
\left\{
\begin{aligned}
& \mathrm dx^\alpha(s) = \Big\{ b^\alpha\big( s,\theta^\alpha(s), \mathbb E_t[\theta^\alpha(s)] \big) +\psi(s) \Big\}\, \mathrm ds\\
& \qquad +\sum_{i=1}^d \Big\{ \sigma_i^\alpha\big( s,\theta^\alpha(s), \mathbb E_t[\theta^\alpha(s)] \big) +\gamma_i(s) \Big\}\, \mathrm dW_i(s),\quad s\in [t,T],\\
& \mathrm dy^\alpha(s) = \Big\{ g^\alpha\big( s,\theta^\alpha(s), \mathbb E_t[\theta^\alpha(s)] \big) +\varphi(s) \Big\}\, \mathrm ds +\sum_{i=1}^d z_i^\alpha(s)\, \mathrm dW_i(s), \quad s\in [t,T],\\
& x^\alpha(t) = \Psi^\alpha\big( y^\alpha(t) \big) +\xi, \qquad y^\alpha(T) = \Phi^\alpha\big( x^\alpha(T), \mathbb E_t[x^\alpha(T)]\big) +\eta.
\end{aligned}
\right.
\end{equation}
Without loss of generality, we assume that the Lipschitz constant $L$ of the original coefficients $(\Psi,\Phi,\Gamma)$ is big enough and the constant $\mu$ in Assumption (H4)-(i)-Case A and the constant $\nu$ in Assumption (H4)-(i)-Case B are small enough such that the inequalities \eqref{3:WLOG1}, \eqref{3:WLOG2}, \eqref{3:WLOG3}, \eqref{3:WLOG4} hold true. Then, a straightforward calculation shows that, for any $\alpha\in [0,1]$, the new coefficients $(\Psi^\alpha,\Phi^\alpha,\Gamma^\alpha)$ (see the definition \eqref{3:Coe:alpha}) also satisfy Assumptions (H3) and (H4) with the same Lipschitz constant $L$, $\mu$, $\nu$, $H$, $P$, $\widetilde P$, $A(\cdot)$, $\widetilde A(\cdot)$, $B(\cdot)$, $\widetilde B(\cdot)$, $C(\cdot)$, and $\widetilde C(\cdot)$ as the original coefficients $(\Psi, \Phi, \Gamma)$.

When $\alpha =0$, we substitute \eqref{3:Coe:0} into MF-FBSDE \eqref{3:FBSDE:alpha} to get
\begin{equation}\label{3:FBSDE:0}
\left\{
\begin{aligned}
& \mathrm dx^0 = \Big\{ -\mu \Big[ B^\top \big( By^{0(1)} +Cz^{0(1)} \big) +\widetilde B^\top \big( \widetilde B y^{0(2)} +\widetilde C z^{0(2)}\big) \Big] +\psi \Big\}\, \mathrm ds\\
& \quad +\sum_{i=1}^d \Big\{ -\mu \Big[ C_i^\top \big( By^{0(1)} +Cz^{0(1)} \big) +\widetilde C_i^\top \big( \widetilde B y^{0(2)} +\widetilde C z^{0(2)}\big) \Big] +\gamma_i \Big\}\, \mathrm dW_i,\quad s\in [t,T],\\
& \mathrm dy^0 = \Big\{ -\nu \Big[ A^\top A x^{0(1)} +\widetilde A^\top \widetilde A x^{0(2)} \Big] +\varphi \Big\}\, \mathrm ds +\sum_{i=1}^d z_i^0\, \mathrm dW_i,\quad s\in [t,T],\\
& x^0(t) = -\mu H^\top H y^0(t) +\xi, \qquad
y^0(T) =\nu \Big[ P^\top P x^0(T)^{(1)} +\widetilde P^\top \widetilde P x^0(T)^{(2)} \Big] +\eta,
\end{aligned}
\right.
\end{equation}
where the argument $s$ is suppressed and the decompositions (see \eqref{3:RV:Dec}) of the corresponding random variables are used for simplicity of notations. Let us discuss in two situations. (i) When Assumption (H4)-(i)-Case A holds true (i.e., $\mu>0$ and $\nu=0$), MF-FBSDE \eqref{3:FBSDE:0} is in a decoupled form. In fact, we can solve the MF-BSDE first to get $(y^0(\cdot),z^0(\cdot))$. Then we substitute $(y^0(\cdot),z^0(\cdot))$ into the MF-SDE and solve $x^0(\cdot)$. (ii) When Assumption (H4)-(i)-Case B holds true (i.e., $\mu=0$ and $\nu>0$), MF-FBSDE \eqref{3:FBSDE:0} is also in a decoupled form. The difference is that, in this case, we first solve the MF-SDE and then MF-BSDE. In summary, under Assumptions (H3) and (H4), MF-FBSDE \eqref{3:FBSDE:0} admits a unique solution $\theta^0(\cdot) \in M^2_{\mathbb F}(t,T;\mathbb R^{n(2+d)})$.

When $\alpha =1$ and $(\xi, \eta, \rho(\cdot))$ vanish, MF-FBSDE \eqref{3:FBSDE:alpha} coincides with MF-FBSDE \eqref{1:FBSDE} that we care about. Next, we will show that if for some $\alpha_0 \in [0,1)$, MF-FBSDE \eqref{3:FBSDE:alpha} is uniquely solvable for any $(\xi,\eta,\rho(\cdot)) \in \mathcal H[t,T]$, then there exists a fixed step length $\delta_0 >0$ such that for any $\alpha \in [\alpha_0,\alpha_0+\delta_0]$, the same conclusion also holds. For this aim, we firstly establish a priori estimate for the solution to MF-FBSDE \eqref{3:FBSDE:alpha}.

\begin{lemma}\label{3:Lem:a priori}
Let Assumptions (H3) and (H4) hold for a given set of coefficients $(\Psi, \Phi, \Gamma)$. Let $\alpha \in [0,1]$ and $(\xi, \eta, \rho(\cdot))$, $(\bar \xi, \bar\eta, \bar\rho(\cdot)) \in \mathcal H[t,T]$. Suppose that $\theta(\cdot)$, $\bar\theta(\cdot) \in M^2_{\mathbb F}(t,T;\mathbb R^{n(2+d)})$ satisfy MF-FBSDEs \eqref{3:FBSDE:alpha} with coefficients $(\Psi^\alpha +\xi, \Phi^\alpha +\eta, \Gamma^\alpha +\rho)$ and $(\Psi^\alpha +\bar\xi, \Phi^\alpha +\bar\eta, \Gamma^\alpha +\bar\rho)$, respectively. Then the following estimate holds:
\begin{equation}\label{3:a priori}
\mathbb E_t \bigg[ \sup_{s\in [t,T]} \big| \widehat x(s) \big|^2 +\sup_{s\in [t,T]} \big| \widehat y(s) \big|^2 +\int_t^T \big| \widehat z(s) \big|^2\, \mathrm ds \bigg] \leq K \mathbb E_t\big[ \mathrm {\widehat J} \big],
\end{equation}
where
\begin{equation}\label{3:hatJ}
\mathrm{\widehat J} = \big| \widehat \xi \big|^2 +\big| \widehat \eta \big|^2 +\bigg( \int_t^T \big| \widehat\varphi(s) \big|\, \mathrm ds \bigg)^2 +\bigg( \int_t^T \big| \widehat\psi(s) \big|\, \mathrm ds \bigg)^2 +\int_t^T \big| \widehat\gamma(s) \big|^2\, \mathrm ds,
\end{equation}
and $\widehat x(\cdot) =x(\cdot) -\bar x(\cdot)$, $\widehat\xi =\xi -\bar\xi$, etc. Here $K>0$ is a constant depending on $(T-t)$, the Lipschitz constant $L$, $\mu$, $\nu$, and the bounds of all $H$, $P$, $\widetilde P$, $A(\cdot)$, $\widetilde A(\cdot)$, $B(\cdot)$, $\widetilde B(\cdot)$, $C(\cdot)$, and $\widetilde C(\cdot)$.
\end{lemma}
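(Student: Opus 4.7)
The plan is a classical duality / integration-by-parts argument, adapted to the mean-field setting via the decomposition $X = X^{(1)} + X^{(2)}$ of \eqref{3:RV:Dec}. A preliminary observation, already made in the paragraph preceding the lemma, is that under the normalizations \eqref{3:WLOG1}--\eqref{3:WLOG4} the interpolating coefficients $(\Psi^\alpha, \Phi^\alpha, \Gamma^\alpha)$ inherit both (H3) and (H4) with the \emph{same} constants $L, \mu, \nu$ and the same matrices $H, P, \widetilde P, A, \widetilde A, B, \widetilde B, C, \widetilde C$ as the original data, so the monotonicity inequalities \eqref{3:M:Psi}--\eqref{3:M:Gamma} apply along the whole continuation path.

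Setting $\widehat x = x - \bar x$, $\widehat y = y - \bar y$, etc., the differences solve a linear MF-FBSDE driven by the $\alpha$-coefficient increments $\widehat \Psi^\alpha, \widehat \Phi^\alpha, \widehat \Gamma^\alpha$ and the perturbation increments. Applying It\^o's formula to $\langle \widehat x(s), \widehat y(s)\rangle$ on $[t,T]$, taking $\mathbb E_t[\cdot]$ to kill the stochastic integrals, and substituting $\widehat x(t) = \widehat \Psi^\alpha + \widehat \xi$ and $\widehat y(T) = \widehat \Phi^\alpha + \widehat \eta$ yields the duality identity
\[
\mathbb E_t\langle \widehat \Phi^\alpha, \widehat x(T)\rangle - \langle \widehat \Psi^\alpha, \widehat y(t)\rangle - \mathbb E_t\!\int_t^T\!\langle \widehat \Gamma^\alpha, \widehat \theta\rangle\, \mathrm ds = \langle \widehat \xi, \widehat y(t)\rangle - \mathbb E_t\langle \widehat \eta, \widehat x(T)\rangle + \mathbb E_t\!\int_t^T\!\langle \widehat \rho, \widehat \theta\rangle\, \mathrm ds.
\]
Because $\mathbb E_t[X^{(1)} Y^{(2)}] = 0$, each inner product on the left splits cleanly into its $(^{(1)}, ^{(2)})$-components, so applying the monotonicity conditions \eqref{3:M:Psi}--\eqref{3:M:Gamma} to $(\Psi^\alpha, \Phi^\alpha, \Gamma^\alpha)$ bounds the left-hand side below by the ``monotonicity budget''
\[
\mu |H\widehat y(t)|^2 + \nu\, \mathbb E_t\!\big[|P\widehat x(T)^{(1)}|^2 + |\widetilde P\widehat x(T)^{(2)}|^2\big] + \nu\, \mathbb E_t\!\int_t^T\! \big[|A\widehat x^{(1)}|^2 + |\widetilde A\widehat x^{(2)}|^2\big] \mathrm ds + \mu\, \mathbb E_t\!\int_t^T\! \big[|B\widehat y^{(1)} + C\widehat z^{(1)}|^2 + |\widetilde B\widehat y^{(2)} + \widetilde C\widehat z^{(2)}|^2\big] \mathrm ds,
\]
while Cauchy--Schwarz combined with Young's inequality bounds the right-hand side by $C_\epsilon\, \mathbb E_t[\mathrm{\widehat J}] + \epsilon\, \mathbb E_t\!\big[\sup_s |\widehat x(s)|^2 + \sup_s|\widehat y(s)|^2 + \int_t^T|\widehat z(s)|^2 \mathrm ds\big]$ for any $\epsilon>0$.

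The closing step invokes Propositions~\ref{2:Prop:SDE}--\ref{2:Prop:BSDE} and splits according to the case of Assumption~(H4)-(i). In Case~A ($\mu>0, \nu=0$) the budget above controls $|H\widehat y(t)|^2$ and the time integrals of $|B\widehat y^{(j)} + C\widehat z^{(j)}|^2$; the first and fourth lines of \eqref{3:D} then dominate $|\widehat \Psi^\alpha|$ and $|\widehat b^\alpha| + |\widehat \sigma^\alpha|$ by precisely these quantities. Substituting into the forward SDE estimate \eqref{2:SDE:Est2} gives $\mathbb E_t[\sup_s|\widehat x(s)|^2] \le K\, \mathbb E_t[\mathrm{\widehat J}] + K\cdot(\text{budget})$, and the BSDE estimate \eqref{2:BSDE:Est2} applied to $(\widehat y, \widehat z)$ with Lipschitz driver $\widehat g^\alpha + \widehat \varphi$ and terminal value $\widehat \Phi^\alpha + \widehat \eta$ yields $\mathbb E_t[\sup_s|\widehat y(s)|^2 + \int_t^T|\widehat z(s)|^2 \mathrm ds] \le K\, \mathbb E_t[\sup_s|\widehat x(s)|^2 + \mathrm{\widehat J}]$. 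Combining these two bounds with the duality inequality above and choosing $\epsilon$ small enough to absorb the trailing $\sup$ and $\int|\widehat z|^2$ terms back into the left-hand side produces \eqref{3:a priori}. Case~B ($\mu=0, \nu>0$) is dual: the second and third lines of \eqref{3:D} now dominate $|\widehat \Phi^\alpha|$ and $|\widehat g^\alpha|$ by the $\nu$-budget, so one applies the BSDE estimate to $(\widehat y, \widehat z)$ first and then the SDE estimate to $\widehat x$.

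The principal obstacle is reconciling the decomposed monotonicity (formulated on $(X^{(1)}, X^{(2)})$ with distinct matrices acting on each part) with the standard SDE/BSDE estimates, which only see total quantities. This reconciliation requires that the matrices $H, P, \widetilde P, A, \widetilde A, B, \widetilde B, C, \widetilde C$ be \emph{deterministic} so that $\mathbb E_t$ commutes with them, and that the Young-inequality slack be tuned via their $L^\infty$-bounds; this is the source of the dependence of $K$ in \eqref{3:a priori} on these matrix bounds as well as on $\mu, \nu$ (which appear as reciprocals through \eqref{3:D}).
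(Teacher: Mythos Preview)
Your proposal is correct and follows essentially the same route as the paper's proof: apply It\^o's formula to $\langle \widehat x(\cdot), \widehat y(\cdot)\rangle$, use the $(^{(1)},^{(2)})$-orthogonality to invoke the monotonicity conditions \eqref{3:M:Psi}--\eqref{3:M:Gamma} for $(\Psi^\alpha,\Phi^\alpha,\Gamma^\alpha)$ and obtain the key inequality (the paper's \eqref{3:Est:Eq4}, your ``budget''), then in Case~A feed the domination bounds \eqref{3:D} into the forward estimate \eqref{2:SDE:Est2} and the Lipschitz bound into the backward estimate \eqref{2:BSDE:Est2} (roles reversed in Case~B), and close with Young's inequality to absorb the residual $\sup|\widehat x|^2+\sup|\widehat y|^2+\int|\widehat z|^2$ terms. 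The order of presentation differs slightly (the paper records the SDE/BSDE estimates first, then the duality identity), but the logic and all the ingredients are the same.
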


\begin{proof}
By the estimate \eqref{2:SDE:Est2} for MF-SDEs, we have (the argument $s$ is suppressed and the decompositions \eqref{3:RV:Dec} are used for simplicity)
\begin{equation}\label{3:Est:Eq1}
\begin{aligned}
& \mathbb E_t \bigg[ \sup_{s\in [t,T]} \big| \widehat x(s) \big|^2 \bigg] \leq K \mathbb E_t \bigg\{ \Big| \alpha \big( \Psi(y(t)) -\Psi(\bar y(t)) \big) -(1-\alpha) \mu H^\top H \widehat y(t) +\widehat \xi \Big|^2\\
& \qquad +\bigg( \int_t^T  \Big| \alpha\Big( b\big( \bar x,\mathbb E_t[\bar x], y, \mathbb E_t[y], z, \mathbb E_t[z] \big) -b\big( \bar\theta, \mathbb E_t[\bar\theta] \big) \Big)\\
& \qquad -(1-\alpha) \mu \Big[ B^\top \big( B\widehat y^{(1)} +C\widehat z^{(1)} \big) +\widetilde B^\top \big( \widetilde B\widehat y^{(2)} +\widetilde C \widehat z^{(2)} \big) \Big] +\widehat \psi \Big|\, \mathrm ds \bigg)^2 \\
& \qquad +\int_t^T \Big| \alpha\Big( \sigma\big( \bar x,\mathbb E_t[\bar x], y, \mathbb E_t[y], z, \mathbb E_t[z] \big) -\sigma\big( \bar\theta, \mathbb E_t[\bar\theta] \big) \Big) \\
& \qquad -(1-\alpha) \mu \Big[ C^\top \big( B\widehat y^{(1)} +C\widehat z^{(1)} \big) +\widetilde C^\top \big( \widetilde B\widehat y^{(2)} +\widetilde C \widehat z^{(2)} \big) \Big] +\widehat\gamma \Big|^2\, \mathrm ds \bigg\},
\end{aligned}
\end{equation}
where the constant $K>0$ could be changed line to line. Similarly, applying the estimate \eqref{2:BSDE:Est2} for MF-BSDEs leads to
\begin{equation}\label{3:Est:Eq2}
\begin{aligned}
& \mathbb E_t \bigg[ \sup_{s\in [t,T]} \big| \widehat y(s) \big|^2 +\int_t^T \big| \widehat z \big|^2\, \mathrm ds \bigg]\\
\leq\ & K \mathbb E_t \bigg\{ \Big| \alpha \Big( \Phi\big( x(T), \mathbb E_t[x(T)] \big) -\Phi\big( \bar x(T), \mathbb E_t[\bar x(T)] \big) \Big)\\ & +(1-\alpha)\nu \Big[ P^\top P \widehat x(T)^{(1)} +\widetilde P^\top \widetilde P \widehat x(T)^{(2)} \Big] +\widehat\eta \Big|^2\\
& +\bigg( \int_t^T \Big| \alpha \Big( g\big( x,\mathbb E_t[x], \bar y, \mathbb E_t[\bar y], \bar z, \mathbb E_t[\bar z] \big) -g\big( \bar\theta,\mathbb E_t[\bar\theta] \big) \Big)\\
& -(1-\alpha) \nu \Big[ A^\top A \widehat x^{(1)} +\widetilde A^\top \widetilde A \widehat x^{(2)} \Big] +\widehat\varphi \Big|\, \mathrm ds \bigg)^2 \bigg\}.
\end{aligned}
\end{equation}

Moreover, we also apply It\^o's formula to $\langle \widehat x(\cdot),\ \widehat y(\cdot) \rangle$ to yield
\begin{equation}\label{3:Est:Eq3}
\mathrm I_1 + \mathbb E_t \Big[ \big\langle \widehat\eta,\ \widehat x(T) \big\rangle \Big] = \mathrm I_2 + \big\langle \widehat \xi,\ \widehat y(t) \big\rangle +\int_t^T \Big\{ \mathrm I_3 +\mathbb E_t \Big[ \big\langle \widehat \rho,\ \widehat \theta \big\rangle \Big] \Big\} \, \mathrm ds,
\end{equation}
where
\[
\begin{aligned}
\mathrm I_1 =\ & \alpha \mathbb E_t \Big[ \big\langle \Phi\big( x(T), \mathbb E_t[x(T)] \big) -\Phi\big( \bar x(T), \mathbb E_t[\bar x(T)] \big),\ \widehat x(T) \big\rangle \Big]\\
& +(1-\alpha) \nu \mathbb E_t \Big[ \big\langle P^\top P \widehat x(T)^{(1)} +\widetilde P^\top \widetilde P \widehat x(T)^{(2)},\ \widehat x(T) \big\rangle \Big],
\end{aligned}
\]
\[
\mathrm I_2 = \alpha \big\langle \Psi\big( y(t) \big) -\Psi\big( \bar y(t) \big),\ \widehat y(t) \big\rangle -(1-\alpha)\mu \big\langle H^\top H \widehat y(t),\ \widehat y(t) \big\rangle,
\]
and 
\[
\begin{aligned}
\mathrm I_3 =\ & \alpha \mathbb E_t \Big[ \big\langle \Gamma\big( \theta, \mathbb E_t[\theta] \big) -\Gamma\big( \bar\theta, \mathbb E_t[\bar\theta] \big),\ \widehat \theta \big\rangle \Big] -(1-\alpha) \mathbb E_t \Big[ \nu \big\langle A^\top A \widehat x^{(1)} +\widetilde A^\top \widetilde A \widehat x^{(2)},\ \widehat x \big\rangle\\
& +\mu \Big\langle B^\top \big( B\widehat y^{(1)} +C\widehat z^{(1)}\big) +\widetilde B^\top \big( \widetilde B \widehat y^{(2)} +\widetilde C \widehat z^{(2)} \big),\ \widehat y \Big\rangle \\
& +\mu \Big\langle C^\top \big( B\widehat y^{(1)} +C\widehat z^{(1)}\big) +\widetilde C^\top \big( \widetilde B \widehat y^{(2)} +\widetilde C \widehat z^{(2)} \big),\ \widehat z \Big\rangle \Big].
\end{aligned}
\]
The monotonicity conditions\eqref{3:M:Phi}, \eqref{3:M:Psi} and \eqref{3:M:Gamma} lead to 
\[
\begin{aligned}
\mathrm I_1 =\ & \alpha \mathbb E_t \left[ \left\langle 
\begin{pmatrix}
\Phi\big( x(T), \mathbb E_t[x(T)] \big)^{(1)} -\Phi\big( \bar x(T), \mathbb E_t[\bar x(T)] \big)^{(1)}\\
\Phi\big( x(T), \mathbb E_t[x(T)] \big)^{(2)} -\Phi\big( \bar x(T), \mathbb E_t[\bar x(T)] \big)^{(2)}
\end{pmatrix},\ \begin{pmatrix}
\widehat x(T)^{(1)}\\ \widehat x(T)^{(2)}
\end{pmatrix}
\right\rangle \right]\\
& +(1-\alpha) \nu \mathbb E_t \left[ \left| \begin{pmatrix}
P\widehat x(T)^{(1)}\\ \widetilde P \widehat x(T)^{(2)}
\end{pmatrix} \right|^2 \right] \geq \nu \mathbb E_t \left[ \left| \begin{pmatrix}
P\widehat x(T)^{(1)}\\ \widetilde P \widehat x(T)^{(2)}
\end{pmatrix} \right|^2 \right],
\end{aligned}
\]
\[
\mathrm I_2 \leq -\mu \big| H\widehat y(t) \big|^2
\]
and 
\[
\begin{aligned}
\mathrm I_3 =\ & \alpha \mathbb E_t \left[ \left\langle 
\begin{pmatrix}
\Gamma\big( \theta, \mathbb E_t[\theta] \big)^{(1)} -\Gamma\big( \bar \theta, \mathbb E_t[\bar \theta] \big)^{(1)}\\
\Gamma\big( \theta, \mathbb E_t[\theta] \big)^{(2)} -\Gamma\big( \bar \theta, \mathbb E_t[\bar \theta] \big)^{(2)}
\end{pmatrix},\ \begin{pmatrix}
\widehat \theta^{(1)}\\ \widehat \theta^{(2)}
\end{pmatrix}
\right\rangle \right]\\
& -(1-\alpha) \nu \mathbb E_t \left[ \left| \begin{pmatrix}
A\widehat x^{(1)}\\ \widetilde A \widehat x^{(2)}
\end{pmatrix} \right|^2 \right] -(1-\alpha)\mu \mathbb E_t \left[ \left| \begin{pmatrix}
B\widehat y^{(1)} +C\widehat z^{(1)}\\ 
\widetilde B\widehat y^{(2)} +\widetilde C\widehat z^{(2)}
\end{pmatrix} \right|^2 \right]\\
\leq\ & - \nu \mathbb E_t \left[ \left| \begin{pmatrix}
A\widehat x^{(1)}\\ \widetilde A \widehat x^{(2)}
\end{pmatrix} \right|^2 \right] -\mu \mathbb E_t \left[ \left| \begin{pmatrix}
B\widehat y^{(1)} +C\widehat z^{(1)}\\ 
\widetilde B\widehat y^{(2)} +\widetilde C\widehat z^{(2)}
\end{pmatrix} \right|^2 \right].
\end{aligned}
\]
Therefore, \eqref{3:Est:Eq3} is reduced to
\begin{equation}\label{3:Est:Eq4}
\begin{aligned}
& \mathbb E_t \left\{ \mu \big| H\widehat y(t) \big|^2
+\nu \left| \begin{pmatrix}
P\widehat x(T)^{(1)}\\ \widetilde P \widehat x(T)^{(2)}
\end{pmatrix} \right|^2
+\int_t^T \left[
\nu \left| \begin{pmatrix}
A\widehat x^{(1)}\\ \widetilde A \widehat x^{(2)}
\end{pmatrix} \right|^2
+\mu \left| \begin{pmatrix}
B\widehat y^{(1)} +C\widehat z^{(1)}\\ 
\widetilde B\widehat y^{(2)} +\widetilde C\widehat z^{(2)}
\end{pmatrix} \right|^2
\right]\, \mathrm ds \right\}\\
& \leq \mathbb E_t \bigg\{ \big\langle \widehat \xi,\ \widehat y(t) \big\rangle
-\big\langle \widehat\eta,\ \widehat x(T) \big\rangle
+\int_t^T \big\langle \widehat \rho,\ \widehat \theta \big\rangle \, \mathrm ds \bigg\}.
\end{aligned}
\end{equation}

We will divide the remaining proof into two cases according to Assumption (H4)-(i).

{\bf Case A: $\mu>0$ and $\nu=0$.} By applying the domination conditions \eqref{3:D} to the estimate \eqref{3:Est:Eq1}, we have
\begin{equation}\label{3:Est:Eq5}
\begin{aligned}
\mathbb E_t \Bigg[ \sup_{s\in [t,T]} \big| \widehat x(s) \big|^2 \bigg] \leq\ & K \mathbb E_t \bigg\{ \big| H\widehat y(t) \big|^2 +\int_t^T \left|
\begin{pmatrix} 
B\widehat y^{(1)} +C\widehat z^{(1)} \\
\widetilde B\widehat y^{(2)} +\widetilde C\widehat z^{(2)}
\end{pmatrix}
\right|^2\, \mathrm ds\\
& +\big|\widehat \xi\big|^2 +\bigg( \int_t^T \big| \widehat \psi \big|\, \mathrm ds \bigg)^2 +\int_t^T \big|\widehat \gamma\big|^2\, \mathrm ds \Bigg\}.
\end{aligned}
\end{equation}
Applying the Lipschitz condition to \eqref{3:Est:Eq2} leads to
\begin{equation}\label{3:Est:Eq6}
\mathbb E_t \bigg[ \sup_{s\in [t,T]} \big| \widehat y(s) \big|^2 +\int_t^T \big| \widehat z \big|^2\, \mathrm ds \bigg] \leq K \mathbb E_t \bigg\{ \sup_{s\in [t,T]} \big|\widehat x(s)\big|^2 +\big|\widehat \eta\big|^2 +\bigg( \int_t^T \big| \widehat\varphi \big|\, \mathrm ds \bigg)^2 \bigg\}.
\end{equation}
Combining \eqref{3:Est:Eq5} and \eqref{3:Est:Eq6} yields
\begin{equation}\label{3:Est:Eq7}
\begin{aligned}
& \mathbb E_t \Bigg[ \sup_{s\in [t,T]} \big| \widehat x(s) \big|^2 +\sup_{s\in [t,T]} \big| \widehat y(s) \big|^2 +\int_t^T \big| \widehat z \big|^2\, \mathrm ds \bigg]\\ 
\leq\ & K \mathbb E_t \bigg\{ \big| H\widehat y(t) \big|^2 +\int_t^T \left|
\begin{pmatrix} 
B\widehat y^{(1)} +C\widehat z^{(1)} \\
\widetilde B\widehat y^{(2)} +\widetilde C\widehat z^{(2)}
\end{pmatrix}
\right|^2\, \mathrm ds +\mathrm{\widehat J} \bigg\},
\end{aligned}
\end{equation}
where $\mathrm{\widehat J}$ is defined by \eqref{3:hatJ}. We continue to combine \eqref{3:Est:Eq7} and \eqref{3:Est:Eq4} to have
\begin{equation}\label{3:Est:Eq8}
\begin{aligned}
& \mathbb E_t \Bigg[ \sup_{s\in [t,T]} \big| \widehat x(s) \big|^2 +\sup_{s\in [t,T]} \big| \widehat y(s) \big|^2 +\int_t^T \big| \widehat z \big|^2\, \mathrm ds \bigg]\\
\leq\ & K_1 \mathbb E_t \bigg\{ \mathrm{\widehat J} +\big\langle \widehat \xi,\ \widehat y(t) \big\rangle
-\big\langle \widehat\eta,\ \widehat x(T) \big\rangle
+\int_t^T \big\langle \widehat \rho,\ \widehat \theta \big\rangle \, \mathrm ds \bigg\}\\
\leq\ & \mathbb E_t \bigg\{ K_1 \mathrm {\widehat J} +K_1^2 \bigg[ \big| \widehat\xi \big|^2 +\big| \widehat\eta \big|^2 +\bigg( \int_t^T \big|\widehat \varphi \big|\, \mathrm ds \bigg)^2 +\bigg( \int_t^T \big|\widehat \psi \big|\, \mathrm ds \bigg)^2 \bigg]\\
& +\frac{K_1^2}{2} \int_t^T \big|\widehat \gamma\big|^2\, \mathrm ds +\frac 1 2 \bigg[ \sup_{s\in [t,T]} \big| \widehat x(s) \big|^2 +\sup_{s\in [t,T]} \big| \widehat y(s) \big|^2 +\int_t^T \big| \widehat z \big|^2\, \mathrm ds \bigg] \bigg\},
\end{aligned}
\end{equation}
where the inequality $ab \leq \frac{1}{4\varepsilon} a^2 +\varepsilon b^2$ for any $\varepsilon >0$ and any $a,b\in\mathbb R$ was used. Clearly, the above \eqref{3:Est:Eq8} implies the desired estimate \eqref{3:a priori}. We finish the proof in this case.

{\bf Case B: $\mu=0$ and $\nu>0$.} In this case, we apply the domination conditions \eqref{3:D} to the estimate \eqref{3:Est:Eq2} to get
\begin{equation}\label{3:Est:Eq9}
\begin{aligned}
& \mathbb E_t \bigg[ \sup_{s\in [t,T]} \big| \widehat y(s) \big|^2 +\int_t^T \big| \widehat z \big|^2\, \mathrm ds \bigg]\\
\leq\ & K \mathbb E_t \bigg\{ \left| \begin{pmatrix}
P\widehat x(T)^{(1)}\\ \widetilde P \widehat x(T)^{(2)}
\end{pmatrix} \right|^2
+\int_t^T \left| \begin{pmatrix}
A\widehat x^{(1)}\\ \widetilde A \widehat x^{(2)}
\end{pmatrix} \right|^2\, \mathrm ds +\big|\widehat \eta\big|^2 +\bigg( \int_t^T \big|\widehat\varphi\big|\, \mathrm ds \bigg)^2 \bigg\}.
\end{aligned}
\end{equation}
By the Lipschitz conditions on the coefficients $\Psi$, $b$ and $\sigma$, we deduce from \eqref{3:Est:Eq1} to have
\begin{equation}\label{3:Est:Eq10}
\begin{aligned}
\mathbb E_t \bigg[ \sup_{s\in [t,T]} \big| \widehat x(s) \big|^2 \bigg] \leq\ & K \mathbb E_t \bigg\{ \sup_{s\in [t,T]} \big| \widehat y(s) \big|^2 +\int_t^T \big| \widehat z \big|^2\, \mathrm ds\\
& + \big| \widehat \xi \big|^2 +\bigg( \int_t^T \big|\widehat\psi\big|\, \mathrm ds \bigg)^2 +\int_t^T \big| \widehat\gamma \big|^2\, \mathrm ds \bigg\}.
\end{aligned}
\end{equation}
Then, \eqref{3:Est:Eq10} and \eqref{3:Est:Eq9} work together to yield
\begin{equation}\label{3:Est:Eq11}
\begin{aligned}
& \mathbb E_t \Bigg[ \sup_{s\in [t,T]} \big| \widehat x(s) \big|^2 +\sup_{s\in [t,T]} \big| \widehat y(s) \big|^2 +\int_t^T \big| \widehat z \big|^2\, \mathrm ds \bigg]\\ 
\leq\ & K \mathbb E_t \bigg\{ \left| \begin{pmatrix}
P\widehat x(T)^{(1)}\\ \widetilde P \widehat x(T)^{(2)}
\end{pmatrix} \right|^2
+\int_t^T \left| \begin{pmatrix}
A\widehat x^{(1)}\\ \widetilde A \widehat x^{(2)}
\end{pmatrix} \right|^2\, \mathrm ds +\mathrm{\widehat J} \bigg\},
\end{aligned}
\end{equation}
where $\mathrm{\widehat J}$ is defined by \eqref{3:hatJ}. Then, combining \eqref{3:Est:Eq11} and \eqref{3:Est:Eq4} leads to
\begin{equation}
\begin{aligned}
& \mathbb E_t \Bigg[ \sup_{s\in [t,T]} \big| \widehat x(s) \big|^2 +\sup_{s\in [t,T]} \big| \widehat y(s) \big|^2 +\int_t^T \big| \widehat z \big|^2\, \mathrm ds \bigg]\\
\leq\ & K_2 \mathbb E_t \bigg\{ \mathrm{\widehat J} +\big\langle \widehat \xi,\ \widehat y(t) \big\rangle
-\big\langle \widehat\eta,\ \widehat x(T) \big\rangle
+\int_t^T \big\langle \widehat \rho,\ \widehat \theta \big\rangle \, \mathrm ds \bigg\}.
\end{aligned}
\end{equation}
By an argument similar to \eqref{3:Est:Eq8}, we know that the above inequality implies the desired priori estimate \eqref{3:a priori}. The whole proof of the lemma is completed. 
\end{proof}

With the help of Lemma \ref{3:Lem:a priori}, we provide the following continuation lemma.

\begin{lemma}\label{3:Lem:Con}
Let Assumptions (H3) and (H4) hold. If for some $\alpha_0\in [0,1)$, MF-FBSDE \eqref{3:FBSDE:alpha} is uniquely solvable in $M^2_{\mathbb F}(t,T;\mathbb R^{n(2+d)})$ for any given $(\xi, \eta, \rho(\cdot)) \in \mathcal H[t,T]$, then there exists an absolute constant $\delta_0>0$ such that the same conclusion also holds for $\alpha =\alpha_0+\delta$ where $\delta \in (0,\delta_0]$ and $\alpha \leq 1$.
\end{lemma}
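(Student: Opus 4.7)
The plan is a standard continuation-by-contraction argument. Fix $\alpha_0 \in [0,1)$, let $\delta \in (0, 1-\alpha_0]$ be a small parameter to be determined, and set $\alpha := \alpha_0 + \delta$. For arbitrary data $(\xi, \eta, \rho(\cdot)) \in \mathcal H[t,T]$, I would exploit the decomposition
\[
(\Psi^\alpha, \Phi^\alpha, \Gamma^\alpha) = (\Psi^{\alpha_0}, \Phi^{\alpha_0}, \Gamma^{\alpha_0}) + \delta \bigl[(\Psi, \Phi, \Gamma) - (\Psi^0, \Phi^0, \Gamma^0)\bigr]
\]
coming from \eqref{3:Coe:alpha}, which suggests an iteration: given a ``frozen'' process $\theta^*(\cdot) = (x^*(\cdot)^\top, y^*(\cdot)^\top, z^*(\cdot)^\top)^\top \in M^2_{\mathbb F}(t,T;\mathbb R^{n(2+d)})$, feed the $\delta$-perturbation evaluated at $\theta^*$ into the inhomogeneity and solve MF-FBSDE \eqref{3:FBSDE:alpha} at level $\alpha_0$ (where solvability is assumed). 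Concretely, define the mapping $\mathcal T: M^2_{\mathbb F}(t,T;\mathbb R^{n(2+d)}) \to M^2_{\mathbb F}(t,T;\mathbb R^{n(2+d)})$ by $\mathcal T(\theta^*(\cdot)) := \theta(\cdot)$, where $\theta(\cdot)$ is the unique solution at $\alpha_0$ with augmented input $(\tilde\xi, \tilde\eta, \tilde\rho(\cdot))$ given by $\tilde\xi = \xi + \delta[\Psi(y^*(t)) - \Psi^0(y^*(t))]$, $\tilde\eta = \eta + \delta[\Phi(x^*(T),\mathbb E_t[x^*(T)]) - \Phi^0(x^*(T),\mathbb E_t[x^*(T)])]$, and $\tilde\rho(\cdot) = \rho(\cdot) + \delta[\Gamma(\cdot,\theta^*,\mathbb E_t[\theta^*]) - \Gamma^0(\cdot,\theta^*,\mathbb E_t[\theta^*])]$. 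Assumption (H3) together with the explicit linear form of $(\Psi^0,\Phi^0,\Gamma^0)$ in \eqref{3:Coe:0} guarantees that $(\tilde\xi,\tilde\eta,\tilde\rho(\cdot)) \in \mathcal H[t,T]$, so $\mathcal T$ is well-defined, and by construction a fixed point of $\mathcal T$ is precisely a solution of MF-FBSDE \eqref{3:FBSDE:alpha} at $\alpha_0+\delta$ with the original data.

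Next I would show that $\mathcal T$ is a strict contraction for $\delta$ small. For $\theta^*_1,\theta^*_2 \in M^2_{\mathbb F}(t,T;\mathbb R^{n(2+d)})$, set $\theta_i = \mathcal T(\theta^*_i)$. Then the difference $\theta_1-\theta_2$ satisfies MF-FBSDE \eqref{3:FBSDE:alpha} at $\alpha_0$ with input $(\tilde\xi_1-\tilde\xi_2, \tilde\eta_1-\tilde\eta_2, \tilde\rho_1(\cdot)-\tilde\rho_2(\cdot))$. Since $(\Psi^{\alpha_0},\Phi^{\alpha_0},\Gamma^{\alpha_0})$ still satisfies Assumptions (H3) and (H4) with the same constants as the original data (by the preamble to this lemma), the a priori estimate of Lemma \ref{3:Lem:a priori} applies at level $\alpha_0$. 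Taking full expectations and then invoking the Lipschitz bounds on $\Psi-\Psi^0$, $\Phi-\Phi^0$, $\Gamma-\Gamma^0$ (which follow from (H3) and the explicit bounded-coefficient form of $(\Psi^0,\Phi^0,\Gamma^0)$) yields
\[
\|\theta_1 - \theta_2\|^2_{M^2_{\mathbb F}(t,T;\mathbb R^{n(2+d)})} \leq C \delta^2 \|\theta^*_1 - \theta^*_2\|^2_{M^2_{\mathbb F}(t,T;\mathbb R^{n(2+d)})},
\]
for a constant $C$ depending only on $(T-t)$, $L$, $\mu$, $\nu$ and the matrix bounds. Choosing $\delta_0$ with $C\delta_0^2 \leq 1/4$ makes $\mathcal T$ a $\tfrac12$-contraction for every admissible $\delta \in (0,\delta_0]$, and the Banach fixed-point theorem delivers a unique fixed point, hence a unique solution of MF-FBSDE \eqref{3:FBSDE:alpha} at $\alpha_0+\delta$.

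The main subtlety, and the reason the statement emphasizes ``absolute'', is that $\delta_0$ must not depend on $\alpha_0$. This is precisely what the stability of the structural constants under the convex combination \eqref{3:Coe:alpha} gives us: $L$, $\mu$, $\nu$, $H$, $P$, $\widetilde P$, $A(\cdot)$, $\widetilde A(\cdot)$, $B(\cdot)$, $\widetilde B(\cdot)$, $C(\cdot)$, $\widetilde C(\cdot)$ are uniform in $\alpha \in [0,1]$, so the constant $K$ in Lemma \ref{3:Lem:a priori} and therefore $C$ above are independent of $\alpha_0$. The remaining verifications (measurability of $\tilde\xi$ and $\tilde\eta$, integrability of $\tilde\rho(\cdot)$, uniqueness at $\alpha_0+\delta$) are routine consequences of (H3) and the contraction property.
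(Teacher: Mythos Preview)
Your proposal is correct and follows essentially the same continuation-by-contraction argument as the paper: define a map at level $\alpha_0$ by absorbing the $\delta$-perturbation into the inhomogeneity, invoke the a priori estimate of Lemma~\ref{3:Lem:a priori} together with the Lipschitz bounds on $(\Psi,\Phi,\Gamma)$ and $(\Psi^0,\Phi^0,\Gamma^0)$, and conclude contraction for $\delta$ below a threshold independent of $\alpha_0$. One phrasing caveat: the difference $\theta_1-\theta_2$ does not literally ``satisfy MF-FBSDE \eqref{3:FBSDE:alpha} at $\alpha_0$'' since the coefficients $(\Psi^{\alpha_0},\Phi^{\alpha_0},\Gamma^{\alpha_0})$ are in general nonlinear, but this is immaterial because Lemma~\ref{3:Lem:a priori} already bounds the difference of two solutions directly in terms of the difference of the inputs, which is exactly what you use.
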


\begin{proof}
Let $\delta_0>0$ be determined below, and $\delta\in (0,\delta_0]$. For any $\theta(\cdot) \in M^2_{\mathbb F}(t,T;\mathbb R^{n(2+d)})$, we consider the following MF-FBSDE (compared to \eqref{3:FBSDE:alpha} with $\alpha =\alpha_0 +\delta$):
\begin{equation}\label{3:FBSDE:Con}
\left\{
\begin{aligned}
& \mathrm dX(s) =\Big\{ b^{\alpha_0}\big( s, \Theta(s), \mathbb E_t[\Theta(s)] \big) +\widetilde \psi(s) \Big\}\, \mathrm ds\\
& \qquad +\sum_{i=1}^d \Big\{ \sigma_i^{\alpha_0}\big( s, \Theta(s), \mathbb E_t[\Theta(s)] \big) +\widetilde \gamma_i(s) \Big\}\, \mathrm dW_i(s),\quad s\in [t,T],\\
& \mathrm dY(s) =\Big\{ g^{\alpha_0}\big( s, \Theta(s), \mathbb E_t[\Theta(s)] \big) +\widetilde \varphi(s) \Big\}\, \mathrm ds +\sum_{i=1}^d Z_i(s)\, \mathrm dW_i(s),\quad s\in [t,T],\\
& X(t) =\Psi^{\alpha_0}\big( Y(t) \big) +\widetilde \xi,\qquad Y(T) = \Phi^{\alpha_0} \big( X(T), \mathbb E_t[X(T)] \big) +\widetilde\eta,
\end{aligned}
\right.
\end{equation}
where
\begin{equation}
\left\{
\begin{aligned}
& \widetilde \xi := \delta \big[ \Psi(y(t)) -\Psi^0(y(t)) \big] +\xi,\\
& \widetilde \eta := \delta \big[ \Phi\big( x(T), \mathbb E_t[x(T)] \big) -\Phi^0\big( x(T), \mathbb E_t[x(T)] \big) \big] +\eta,\\
& \widetilde \rho(\cdot) := \delta \big[ \Gamma\big( \cdot, \theta(\cdot), \mathbb E_t[\theta(\cdot)] \big) -\Gamma^0\big( \cdot, \theta(\cdot), \mathbb E_t[\theta(\cdot)] \big] +\rho(\cdot),
\end{aligned}
\right.
\end{equation}
and $(\Psi^0, \Phi^0, \Gamma^0)$ is given by \eqref{3:Coe:0}. It is easy to verify that $(\widetilde \xi, \widetilde\eta, \widetilde \rho(\cdot)) \in \mathcal H[t,T]$. Then, by our assumption, MF-FBSDE \eqref{3:FBSDE:Con} admits a unique solution $\Theta(\cdot) \in M^2_{\mathbb F}(t,T;\mathbb R^{n(2+d)})$. Noting the arbitrariness of $\theta(\cdot)$, we have established a mapping
\[
\Theta(\cdot) = \mathscr T_{\alpha_0+\delta} \big( \theta(\cdot) \big) :
M^2_{\mathbb F}(t,T;\mathbb R^{n(2+d)}) \rightarrow M^2_{\mathbb F}(t,T;\mathbb R^{n(2+d)}).
\]
In the following, we shall prove that this mapping is a contraction when $\delta$ is small.

Let $\theta(\cdot)$, $\bar\theta(\cdot) \in M^2_{\mathbb F}(t,T;\mathbb R^{n(2+d)})$ and denote $\Theta(\cdot) = \mathscr T_{\alpha_0+\delta}(\theta(\cdot))$, $\bar \Theta(\cdot) = \mathscr T_{\alpha_0+\delta}(\bar \theta(\cdot))$. Moreover, denote $\widehat\theta(\cdot) =\theta(\cdot) -\bar \theta(\cdot)$, $\widehat\Theta(\cdot) =\Theta(\cdot) -\bar \Theta(\cdot)$, etc. By applying Lemma \ref{3:Lem:a priori}, we have
\[
\begin{aligned}
& \big\Vert \widehat\Theta(\cdot) \big\Vert_{M^2_{\mathbb F}(t,T;\mathbb R^{n(2+d)})}^2 = \mathbb E\bigg[ \sup_{s\in [t,T]} \big| \widehat X(s) \big|^2 +\sup_{s\in [t,T]} \big| \widehat Y(s) \big|^2 +\int_t^T \big| \widehat Z(s) \big|^2\, \mathrm ds \bigg]\\
\leq\ & K\delta^2 \mathbb E\bigg\{ \Big| \Big[ \Psi\big( y(t) \big) -\Psi\big( \bar y(t) \big) \Big] -\Big[ \Psi^0\big( y(t) \big) -\Psi^0\big( \bar y(t) \big) \Big] \Big|^2\\
& +\Big| \Big[ \Phi \big( x(T), \mathbb E_t[x(T)] \big) -\Phi \big( \bar x(T), \mathbb E_t[\bar x(T)] \big) \Big] -\Big[ \Phi^0 \big( x(T), \mathbb E_t[x(T)] \big) -\Phi^0 \big( \bar x(T), \mathbb E_t[\bar x(T)] \big) \Big] \Big|^2\\
& +\bigg( \int_t^T \Big| \Big[ g\big(\theta, \mathbb E_t[\theta]\big) -g\big(\bar\theta, \mathbb E_t[\bar\theta]\big) \Big] -\Big[ g^0\big(\theta, \mathbb E_t[\theta]\big) -g^0\big(\bar\theta, \mathbb E_t[\bar\theta]\big) \Big] \Big|\, \mathrm ds \bigg)^2\\
& +\bigg( \int_t^T \Big| \Big[ b\big(\theta, \mathbb E_t[\theta]\big) -b\big(\bar\theta, \mathbb E_t[\bar\theta]\big) \Big] -\Big[ b^0\big(\theta, \mathbb E_t[\theta]\big) -b^0\big(\bar\theta, \mathbb E_t[\bar\theta]\big) \Big] \Big|\, \mathrm ds \bigg)^2\\
& +\int_t^T \Big| \Big[ \sigma\big(\theta, \mathbb E_t[\theta]\big) -\sigma\big(\bar\theta, \mathbb E_t[\bar\theta]\big) \Big] -\Big[ \sigma^0\big(\theta, \mathbb E_t[\theta]\big) -\sigma^0\big(\bar\theta, \mathbb E_t[\bar\theta]\big) \Big] \Big|^2\, \mathrm ds \bigg\}.
\end{aligned}
\]
Then, due to the Lipschitz continuity of the coefficients $(\Psi,\Phi,\Gamma)$ and $(\Psi^0, \Phi^0, \Gamma^0)$, there exists a constant $K_3>0$ independent of $\alpha_0$ and $\delta$ such that
\[
\big\Vert \widehat\Theta(\cdot) \big\Vert_{M^2_{\mathbb F}(t,T;\mathbb R^{n(2+d)})}^2 \leq K_3\delta^2 \big\Vert \widehat\theta(\cdot) \big\Vert_{M^2_{\mathbb F}(t,T;\mathbb R^{n(2+d)})}^2.
\]
Choose $\delta_0 = 1/(2\sqrt{K_3})$. Then, for any $\delta\in (0,\delta_0]$, the above inequality implies that $\mathscr T_{\alpha_0+\delta}$ is a contraction mapping. Consequently, it admits a unique fixed point which is just the unique solution to MF-FBSDE \eqref{3:FBSDE:alpha} with $\alpha =\alpha_0+\delta$. The proof is completed.
\end{proof}

\begin{proof}[Proof of Theorem \ref{3:THM:FBSDE}]
Firstly, the unique solvability of MF-FBSDE \eqref{1:FBSDE} in the space $M^2_{\mathbb F}(t,T;\mathbb R^{n(2+d)})$ is obtained by virtue of the unique solvability of MF-FBSDE \eqref{3:FBSDE:0} and Lemma \ref{3:Lem:Con}. Secondly, by letting $\alpha=1$, $(\xi, \eta, \rho(\cdot)) = (0,0,0)$ and
\[
\begin{aligned}
\big(\bar\xi, \bar\eta, \bar\rho(\cdot)\big) = \Big( & \bar\Psi\big(\bar y(t)\big) -\Psi\big(\bar y(t)\big),\ \bar\Phi\big( \bar x(T), \mathbb E_t[\bar x(T)] \big) -\Phi\big( \bar x(T), \mathbb E_t[\bar x(T)] \big),\\
& \bar\Gamma\big(\cdot, \bar\theta(\cdot), \mathbb E_t[\bar\theta(\cdot)]\big) -\Gamma\big(\cdot, \bar\theta(\cdot), \mathbb E_t[\bar\theta(\cdot)]\big) \Big),
\end{aligned}
\]
we obtain \eqref{3:FBSDE:Est2} from \eqref{3:a priori}. Finally, by selecting $(\bar\Psi, \bar\Phi, \bar\Gamma) =(0,0,0)$, we get \eqref{3:FBSDE:Est1} from \eqref{3:FBSDE:Est2} and complete the proof.
\end{proof}

Now, we give a remark to end this section.

\begin{remark}\label{3:Rem:Sym}
There exists a symmetrical version of the monotonicity conditions in Assumption (H4)-(iii) as follows:

\medskip

\noindent{\bf Assumption (H4)-(iii)$'$.} {\rm For any $y, \bar y \in \mathbb R^n$ and almost all $\omega \in \Omega$,
\begin{equation}
\langle \Psi(y) -\Psi(\bar y),\ \widehat y \rangle \geq \mu |H\widehat y|^2.
\end{equation}
For any $X, \bar X \in L^2_{\mathcal F_T}(\Omega;\mathbb R^n)$,
\begin{equation}
\mathbb E_t\left[ \left\langle \begin{pmatrix} 
\Phi\big( X, \mathbb E_t[X] \big)^{(1)} -\Phi\big( \bar X, \mathbb E_t[\bar X] \big)^{(1)}\\ 
\Phi\big( X, \mathbb E_t[X] \big)^{(2)} -\Phi\big( \bar X, \mathbb E_t[\bar X] \big)^{(2)}
\end{pmatrix},\ 
\begin{pmatrix}
\widehat X^{(1)}\\ \widehat X^{(2)}
\end{pmatrix} \right\rangle \right]
\leq -\nu \mathbb E_t\left[ \left| \begin{pmatrix} 
P\widehat X^{(1)}\\ \widetilde P\widehat X^{(2)}
\end{pmatrix} \right|^2 \right].
\end{equation}
For almost all $s\in [t,T]$ and any $\Theta := (X^\top,Y^\top,Z^\top)^\top$, $\bar\Theta := (X^\top,Y^\top,Z^\top)^\top \in L^2_{\mathcal F_s}(\Omega;\mathbb R^{n(2+d)})$,
\begin{equation}
\begin{aligned}
& \mathbb E_t\left[ \left\langle \begin{pmatrix} 
\Gamma\big(s, \Theta, \mathbb E_t[\Theta] \big)^{(1)} -\Gamma\big(s, \bar \Theta, \mathbb E_t[\bar \Theta] \big)^{(1)}\\ 
\Gamma\big(s, \Theta, \mathbb E_t[\Theta] \big)^{(2)} -\Gamma\big(s, \bar \Theta, \mathbb E_t[\bar \Theta] \big)^{(2)}
\end{pmatrix},\ 
\begin{pmatrix}
\widehat \Theta^{(1)}\\ \widehat \Theta^{(2)}
\end{pmatrix} \right\rangle \right]\\
\geq\ & \nu \mathbb E_t\left[ \left| \begin{pmatrix} 
A(s)\widehat X^{(1)}\\ \widetilde A(s)\widehat X^{(2)}
\end{pmatrix} \right|^2 \right] +\mu \mathbb E_t\left[ \left| \begin{pmatrix} 
B(s)\widehat Y^{(1)} +C(s)\widehat Z^{(1)}\\ 
\widetilde B(s)\widehat Y^{(2)} +\widetilde C(s) \widehat Z^{(2)}
\end{pmatrix} \right|^2 \right].
\end{aligned}
\end{equation}
}

\medskip

\noindent In fact, it is easy to verify that, if $\theta(\cdot) = (x(\cdot)^\top, y(\cdot)^\top, z(\cdot)^\top)^\top \in M^2_{\mathbb F}(t,T;\mathbb R^{n(2+d)})$ is a solution to MF-FBSDE \eqref{1:FBSDE} with the coefficients $(\Psi, \Phi, \Gamma)$, then
\begin{equation}\label{3:Sym:Trans}
\widetilde \theta(\cdot) = (\widetilde x(\cdot)^\top, \widetilde y(\cdot)^\top, \widetilde z(\cdot)^\top)^\top := \big( x(\cdot)^\top, -y(\cdot)^\top, -z(\cdot)^\top \big)^\top
\end{equation}
is a solution to another MF-FBSDE with the coefficients
\[
\begin{aligned}
& \widetilde\Psi(\widetilde y) := \Psi(-\widetilde y),\qquad \widetilde\Phi(\widetilde x, \widetilde x') := -\Phi(\widetilde x, \widetilde x'),\\
& \widetilde \Gamma(s, \widetilde \theta, \widetilde\theta') = \big(   -g\big(s,\widetilde x, -\widetilde y, -\widetilde z, \widetilde x', -\widetilde y', -\widetilde z' \big)^\top, b\big(s,\widetilde x, -\widetilde y, -\widetilde z, \widetilde x', -\widetilde y', -\widetilde z' \big)^\top,\\
& \hskip 23.5mm \sigma \big(s,\widetilde x, -\widetilde y, -\widetilde z, \widetilde x', -\widetilde y', -\widetilde z' \big)^\top \big)^\top.
\end{aligned}
\]
We continue to verify that, if the coefficients $(\Psi,\Phi,\Gamma)$ satisfy Assumption (H4)-(iii) (resp. Assumption (H4)-(iii)$'$), then the coefficients $(\widetilde \Psi, \widetilde \Phi, \widetilde \Gamma)$ will satisfy Assumption (H4)-(iii)$'$ (resp. Assumption (H4)-(iii)). By virtue of the invertible transformation \eqref{3:Sym:Trans}, all conclusions in this section are also valid when Assumption (H4)-(iii) is replaced by Assumption (H4)-(iii)$'$.
\end{remark}

\section{Application to MF-LQ problems}\label{Sec:LQ}

In this section, we shall consider some LQ optimal control problems driven by an MF-SDE and an MF-BSDE respectively. The Hamiltonian systems arising from these LQ problems will be found to be MF-FBSDEs with domination-monotonicity conditions, then they are uniquely solvable by Theorem \ref{3:THM:FBSDE}. In fact, to study the (unique) solvability of Hamiltonian systems is one of our research motivations.

\subsection{Forward MF-LQ control problem}

In the first LQ problem, we consider the following linear controlled MF-SDE:
\begin{equation}\label{4.1:Sys}
\left\{
\begin{aligned}
& \mathrm dx(s) =\Big\{ A(s)x(s) +\bar A(s) \mathbb E_t[x(s)] +B(s)u(s) +\bar B(s)\mathbb E_t[u(s)] +\alpha(s) \Big\}\, \mathrm ds\\
& \quad +\sum_{i=1}^d \Big\{ C_i(s)x(s) +\bar C_i(s) \mathbb E_t[x(s)] +D_i(s)u(s) +\bar D_i(s)\mathbb E_t[u(s)] +\beta_i(s) \Big\}\, \mathrm dW_i(s),\\ 
& \hskip 12cm s\in [t,T],\\
& x(t) = H\xi +x_t,
\end{aligned}
\right.
\end{equation}
where $A(\cdot), \bar A(\cdot), C_i(\cdot), \bar C_i(\cdot) \in L^\infty(t,T;\mathbb R^{n\times n})$, $B(\cdot), \bar B(\cdot), D_i(\cdot), \bar D_i(\cdot) \in L^\infty(t,T;\mathbb R^{n\times m})$, $H\in \mathbb R^{n\times n}$, $\alpha(\cdot) \in L^2_{\mathbb F}(\Omega;L([t,T];\mathbb R^n))$, $\beta_i(\cdot) \in L^2_{\mathbb F}(t,T;\mathbb R^n)$, and $x_t\in L^2_{\mathcal F_t}(\Omega;\mathbb R^n)$ ($i=1,2,\dots,d$). The pair $(\xi, u(\cdot)) \in L^2_{\mathcal F_t}(\Omega;\mathbb R^n) \times L^2_{\mathbb F}(t,T;\mathbb R^m)$ is called an {\it admissible control}. By Proposition \ref{2:Prop:SDE}, for any admissible control $(\xi, u(\cdot))\in L^2_{\mathcal F_t}(\Omega;\mathbb R^n) \times L^2_{\mathbb F}(t,T;\mathbb R^m)$, MF-SDE \eqref{4.1:Sys} admits a unique solution $x(\cdot)\equiv x(\cdot;\xi,u(\cdot)) \in L^2_{\mathbb F}(\Omega;C([t,T];\mathbb R^n))$ which is called the {\it admissible state process} under $(\xi,u(\cdot))$. Moreover, $(x(\cdot),\xi,u(\cdot))$ is called an {\it admissible triple}. For convenience, we denote $C(\cdot) := (C_1(\cdot)^\top, C_2(\cdot)^\top, \dots, C_d(\cdot)^\top)^\top$ and $D(\cdot) := (D_1(\cdot)^\top, D_2(\cdot)^\top, \dots, D_d(\cdot)^\top)^\top$.

We notice that, in most of literature on LQ optimal control problems (see Yong and Zhou \cite{YZh-99} for the problems driven by SDEs and  \cite{Yong-13, NLZh-16, S-17, W-19, LLY-20} for the problems driven by MF-SDEs), the initial condition is fixed to be $x(t) =x_t$. Clearly, this is a special case of our research, i.e. $H=0$. Here, we consider a general situation, i.e., we can select $\xi\in L^2_{\mathcal F_t}(\Omega;\mathbb R^n)$ to change the initial value $x(t)$. The introduction of matrix $H$ enables our formulation to be better meet the various actual applications. For example, if 
\[
H = \begin{pmatrix} 
1 & 0 & \cdots & 0 \\
0 & 0 & \cdots & 0 \\
\vdots & \vdots & \ddots & \vdots \\
0 & 0 & \cdots & 0
\end{pmatrix},
\]
then we can only change the value of the first component of $x(t)$.

Besides the controlled system \eqref{4.1:Sys}, we are also given an objective functional in a quadratic form:
\begin{equation}
\begin{aligned}
J\big( \xi, u(\cdot) \big) =\ & \frac 1 2 \mathbb E_t \bigg\{ \langle M \xi,\ \xi \rangle +\langle Gx(T),\ x(T) \rangle +\big\langle \bar G\mathbb E_t[x(T)],\ \mathbb E_t[x(T)] \big\rangle \\
& +\int_t^T \Big[ \langle Q(s)x(s),\ x(s) \rangle +\big\langle \bar Q(s)\mathbb E_t[x(s)],\ \mathbb E_t[x(s)] \big\rangle\\
& +\langle R(s)u(s),\ u(s) \rangle +\big\langle \bar R(s)\mathbb E_t[u(s)],\ \mathbb E_t[u(s)] \big\rangle \Big]\, \mathrm ds \bigg\},
\end{aligned}
\end{equation}
where $M, G, \bar G \in\mathbb S^n$, $Q(\cdot), \bar Q(\cdot) \in L^\infty(t,T;\mathbb S^n)$ and $R(\cdot), \bar R(\cdot) \in L^\infty(t,T;\mathbb S^m)$. Clearly, for any $(\xi, u(\cdot)) \in L^2_{\mathcal F_t}(\Omega;\mathbb R^n) \times L^2_{\mathbb F}(t,T;\mathbb R^m)$, $J(\xi,u(\cdot))$ is well-defined.

We propose a mean-field type forward LQ (MF-FLQ, for short) control problem as follows:

\medskip

\noindent{\bf Problem (MF-FLQ).} Find an admissible control $(\xi^*,u^*(\cdot)) \in L^2_{\mathcal F_t}(\Omega;\mathbb R^n) \times L^2_{\mathbb F}(t,T;\mathbb R^m)$ such that
\begin{equation}\label{4.1:Opt}
J\big( \xi^*, u^*(\cdot) \big) =\essinf_{(\xi, u(\cdot)) \in L^2_{\mathcal F_t}(\Omega;\mathbb R^n) \times L^2_{\mathbb F}(t,T;\mathbb R^m)} J\big( \xi, u(\cdot) \big).
\end{equation}
$(\xi^*,u^*(\cdot))$ satisfying \eqref{4.1:Opt} is called an {\it optimal control}, $x^*(\cdot) \equiv x(\cdot;\xi^*,u^*(\cdot))$ is called the corresponding {\it optimal state process}, and $(x^*(\cdot),\xi^*,u^*(\cdot))$ is called an {\it optimal triple}.

\medskip

As usual, for a matrix $M\in \mathbb S^n$, when $M$ is positive semi-definite (resp. positive definite, negative semi-definite, negative definite), we denote $M\geq 0$ (resp. $>0$, $\leq 0$, $<0$). Moreover, for a mapping $M: [t,T] \rightarrow \mathbb S^n$, we denote $M(\cdot) \geq 0$ (resp. $>0$, $\leq 0$, $<0$) when $M(s) \geq 0$ (resp. $>0$, $\leq 0$, $<0$) for almost all $s\in [t,T]$. When there exists a constant $\delta>0$ such that $M(\cdot) -\delta I_n \geq 0$ (resp. $M(\cdot) +\delta I_n \leq 0$), we denote $M(\cdot) \gg 0$ (resp. $M(\cdot)\ll 0$). Now, we introduce the following assumption called the uniformly positive definiteness condition (PD, for short) for the weighting matrices in the objective functional:

\medskip

\noindent{\bf Condition (MF-FLQ-PD).} $M>0$, $G\geq 0$, $G+\bar G\geq 0$, $Q(\cdot) \geq 0$, $Q(\cdot) +\bar Q(\cdot) \geq 0$, $R(\cdot) \gg 0$, and $R(\cdot) +\bar R(\cdot) \gg 0$.

\begin{lemma}\label{4.1:Lem:IIF}
Let Condition (MF-FLQ-PD) hold. Let $(x^*(\cdot),\xi^*,u^*(\cdot))$ be an admissible triple. Denote by $(y(\cdot),z(\cdot)) \in L^2_{\mathbb F}(\Omega;C([t,T];\mathbb R^n)) \times L^2_{\mathbb F}(t,T;\mathbb R^{nd})$ the unique solution to the following MF-BSDE:
\begin{equation}\label{4.1:AdjEq}
\left\{
\begin{aligned}
& \mathrm dy(s) = -\Big\{ A(s)^\top y(s) +\bar A(s)^\top \mathbb E_t[y(s)] +C(s)^\top z(s) +\bar C(s)^\top \mathbb E_t[z(s)]\\
& \qquad +Q(s)x^*(s) +\bar Q(s) \mathbb E_t[x^*(s)] \Big\}\, \mathrm ds +\sum_{i=1}^d z_i(s)\, \mathrm dW_i(s),\quad s\in [t,T],\\
& y(T) = Gx^*(T) +\bar G\mathbb E_t[x^*(T)].
\end{aligned}
\right.
\end{equation}
Then, $(\xi^*, u^*(\cdot))$ is an optimal control of Problem (MF-FLQ) if and only if
\begin{equation}\label{4.1:Opt:IIF}
\left\{
\begin{aligned}
& M\xi^* +H^\top y(t) =0,\\
& R(\cdot) u^*(\cdot) +\bar R(\cdot) \mathbb E_t[u^*(\cdot)] +B(\cdot)^\top y(\cdot) +\bar B(\cdot)^\top \mathbb E_t[y(\cdot)] +D(\cdot)^\top z(\cdot) +\bar D(\cdot)^\top \mathbb E_t[z(\cdot)] =0.
\end{aligned}
\right.
\end{equation}
\end{lemma}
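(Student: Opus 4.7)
The plan is to use the standard Lagrangian/perturbation approach for quadratic optimal control, exploiting both linearity of the state equation and the quadratic structure of the cost. First, I will fix any admissible perturbation $(\xi^\Delta, u^\Delta) \in L^2_{\mathcal F_t}(\Omega;\mathbb R^n) \times L^2_{\mathbb F}(t,T;\mathbb R^m)$ and write $\xi = \xi^* + \xi^\Delta$, $u(\cdot) = u^*(\cdot) + u^\Delta(\cdot)$. Because MF-SDE \eqref{4.1:Sys} is affine in $(x,\xi,u)$, the corresponding state admits the decomposition $x(\cdot) = x^*(\cdot) + x^\Delta(\cdot)$, where $x^\Delta(\cdot)$ solves the homogeneous variational MF-SDE driven by $(H\xi^\Delta, u^\Delta)$ (namely \eqref{4.1:Sys} with $\alpha,\beta_i,x_t$ all set to zero and $\xi, u$ replaced by $\xi^\Delta, u^\Delta$). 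Expanding the quadratic functional then gives
\[
J(\xi,u(\cdot)) - J(\xi^*,u^*(\cdot)) = \mathcal L(\xi^\Delta,u^\Delta(\cdot)) + \mathcal Q(\xi^\Delta,u^\Delta(\cdot)),
\]
where $\mathcal L$ is linear in the perturbation and $\mathcal Q$ is precisely $J$ with all inhomogeneous data removed; by Condition (MF-FLQ-PD), $\mathcal Q(\xi^\Delta,u^\Delta(\cdot)) \geq 0$ for every perturbation.

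Next I would rewrite $\mathcal L$ using the adjoint pair $(y(\cdot),z(\cdot))$. Apply It\^o's formula to $\langle y(s),x^\Delta(s)\rangle$ on $[t,T]$ and take $\mathbb E_t[\cdot]$. The terminal value of $y$ produces $\mathbb E_t[\langle Gx^*(T)+\bar G\mathbb E_t[x^*(T)],x^\Delta(T)\rangle]$, while the initial value gives $\langle y(t),H\xi^\Delta\rangle = \langle H^\top y(t),\xi^\Delta\rangle$. For the integrand, the drift of $y$ exactly cancels the state weighting $Qx^* +\bar Q\mathbb E_t[x^*]$ appearing in $\mathcal L$, provided one repeatedly uses the mean-field identities $\mathbb E_t[\langle \bar A^\top\mathbb E_t[y],x^\Delta\rangle] = \langle \bar A^\top\mathbb E_t[y],\mathbb E_t[x^\Delta]\rangle = \mathbb E_t[\langle y, \bar A \mathbb E_t[x^\Delta]\rangle]$, and similarly for $\bar B,\bar C_i,\bar D_i$. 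After all cancellations one is left with
\[
\mathcal L(\xi^\Delta,u^\Delta) = \mathbb E_t\Big[\langle M\xi^*+H^\top y(t),\xi^\Delta\rangle + \int_t^T \langle \Lambda(s), u^\Delta(s)\rangle\,\mathrm ds\Big],
\]
where $\Lambda(s) := R(s)u^*(s)+\bar R(s)\mathbb E_t[u^*(s)]+B(s)^\top y(s)+\bar B(s)^\top\mathbb E_t[y(s)] +D(s)^\top z(s)+\bar D(s)^\top\mathbb E_t[z(s)]$.

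Sufficiency is then immediate: if \eqref{4.1:Opt:IIF} holds, then $\mathcal L \equiv 0$, hence $J(\xi,u(\cdot)) - J(\xi^*,u^*(\cdot)) = \mathcal Q(\xi^\Delta,u^\Delta) \geq 0$ for every admissible perturbation, so $(\xi^*,u^*(\cdot))$ is optimal. For necessity, suppose $(\xi^*,u^*(\cdot))$ is optimal. For any admissible perturbation the map $\varepsilon \mapsto J(\xi^*+\varepsilon\xi^\Delta, u^*+\varepsilon u^\Delta) = J(\xi^*,u^*) + \varepsilon \mathcal L(\xi^\Delta,u^\Delta) + \varepsilon^2 \mathcal Q(\xi^\Delta,u^\Delta)$ is minimized at $\varepsilon=0$, forcing $\mathcal L(\xi^\Delta,u^\Delta) = 0$ for all admissible $(\xi^\Delta,u^\Delta)$. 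Choosing $u^\Delta \equiv 0$ and letting $\xi^\Delta$ range over $L^2_{\mathcal F_t}(\Omega;\mathbb R^n)$ yields the first identity in \eqref{4.1:Opt:IIF}; choosing $\xi^\Delta=0$ and letting $u^\Delta$ range over $L^2_{\mathbb F}(t,T;\mathbb R^m)$ forces $\Lambda(\cdot) = 0$ $\mathrm dt\otimes\mathrm d\mathbb P$-a.e.

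The main obstacle I anticipate is the careful bookkeeping in the It\^o computation because the adjoint equation and the variational equation both carry $\mathbb E_t[\cdot]$ terms; one must systematically use the self-adjointness of the conditional expectation (and the $\mathcal F_t$-measurability of $\mathbb E_t[\,\cdot\,]$) to transfer the mean-field drift coefficients between the state and adjoint variables and to consolidate all $\bar A,\bar B, \bar C_i, \bar D_i, \bar Q, \bar G, \bar R$ contributions. Once this algebra is organized cleanly, both directions follow from a single variational identity.
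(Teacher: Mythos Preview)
Your proposal is correct and follows essentially the same approach as the paper: introduce the variational state $x^\Delta$ (the paper's $x_1$), expand the cost difference into a linear part $\mathcal L$ (the paper's $\mathrm I_1$) plus a nonnegative quadratic part $\mathcal Q$ (the paper's $\frac12\mathrm I_2$), and identify $\mathcal L$ via It\^o's formula on $\langle y,x^\Delta\rangle$ together with the mean-field self-adjointness identities. The only cosmetic difference is that the paper carries the scalar $\varepsilon$ from the outset and derives necessity by sending $\varepsilon\to 0^\pm$, while you first expand at $\varepsilon=1$ and reintroduce $\varepsilon$ for necessity; also, for $\mathcal Q\ge 0$ the paper makes explicit the decomposition $X=(X-\mathbb E_t[X])+\mathbb E_t[X]$ to exploit $G\ge 0$, $G+\bar G\ge 0$, etc., which you should likewise spell out.
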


\begin{proof}
Let $(x^*(\cdot),\xi^*,u^*(\cdot))$ be an admissible triple. For any $(\xi, u(\cdot)) \in L^2_{\mathcal F_t}(\Omega;\mathbb R^n) \times L^2_{\mathbb F}(t,T;\mathbb R^m)$ and $\varepsilon \in \mathbb R$, denote by $x^\varepsilon(\cdot)$ the admissible state process under $(\xi^* +\varepsilon \xi, u^*(\cdot) +\varepsilon u(\cdot))$. Let $x_1(\cdot) = (x^\varepsilon(\cdot) -x^*(\cdot))/\varepsilon$. Then it is the unique solution to the following MF-SDE:
\begin{equation}
\left\{
\begin{aligned}
& \mathrm dx_1(s) =\Big\{ A(s)x_1(s) +\bar A(s) \mathbb E_t[x_1(s)] +B(s)u(s) +\bar B(s)\mathbb E_t[u(s)] \Big\}\, \mathrm ds\\
& +\sum_{i=1}^d \Big\{ C_i(s)x_1(s) +\bar C_i(s) \mathbb E_t[x_1(s)] +D_i(s)u(s) +\bar D_i(s)\mathbb E_t[u(s)] \Big\}\, \mathrm dW_i(s),\quad s\in [t,T],\\
& x_1(t) = H\xi.
\end{aligned}
\right.
\end{equation}
Applying It\^o's formula to $\langle x_1(\cdot),\ y(\cdot) \rangle$ leads to (the argument $s$ is suppressed for simplicity)
\begin{equation}\label{4.1:Eq1}
\begin{aligned}
& \mathbb E_t \bigg\{ \big\langle x_1(T),\ Gx^*(T) +\bar G\mathbb E_t[x^*(T)] \big\rangle +\int_t^T \big\langle x_1,\ Qx^* +\bar Q \mathbb E_t[x^*] \big\rangle\, \mathrm ds \bigg\}\\
=\ & \mathbb E_t \bigg\{ \langle H\xi,\ y(t) \rangle +\int_t^T \big\langle u,\ B^\top y +\bar B^\top \mathbb E_t[y] +D^\top z +\bar D^\top \mathbb E_t[z] \big\rangle\, \mathrm ds \bigg\}.
\end{aligned}
\end{equation}

Now, we calculate the difference:
\[
J\big( \xi^* +\varepsilon \xi, u^*(\cdot) +\varepsilon u(\cdot) \big) -J\big( \xi^*, u^*(\cdot) \big) = \varepsilon  \mathrm I_1 + \frac{\varepsilon^2}{2} \mathrm I_2,
\]
where 
\[
\begin{aligned}
\mathrm I_1 =\ & \mathbb E_t \bigg\{ \langle M\xi^*, \xi \rangle +\langle Gx^*(T),\ x_1(T) \rangle +\langle \bar G \mathbb E_t[x^*(T)],\ \mathbb E_t[x^*(T)] \rangle\\
& +\int_t^T \Big[ \langle Qx^*,\ x_1 \rangle +\langle \bar Q \mathbb E_t[x^*],\ \mathbb E_t[x_1] \rangle +\langle Ru^*,\ u \rangle +\langle \bar R \mathbb E_t[u^*],\ \mathbb E_t[u] \rangle \Big]\, \mathrm ds \bigg\}\\
=\ & \mathbb E_t \bigg\{ \langle M\xi^*,\ \xi \rangle +\big\langle Gx^*(T) +\bar G \mathbb E_t[x^*(T)],\ x_1(T) \big\rangle\\
& +\int_t^T \Big[ \big\langle Qx^* +\bar Q\mathbb E_t[x^*],\ x_1 \big\rangle +\big\langle Ru^* +\bar R \mathbb E_t[u^*],\ u \big\rangle \Big]\, \mathrm ds \bigg\}
\end{aligned}
\]
and
\[
\begin{aligned}
\mathrm I_2 =\ & \mathbb E_t \bigg\{ \langle M\xi,\ \xi \rangle +\langle Gx_1(T),\ x_1(T) \rangle +\langle \bar G \mathbb E_t[x_1(T)],\ \mathbb E_t[x_1(T)] \rangle\\
& +\int_t^T \Big[ \langle Qx_1,\ x_1 \rangle +\langle \bar Q\mathbb E_t[x_1],\ \mathbb E_t[x_1] \rangle +\langle Ru,\ u \rangle +\langle \bar R\mathbb E_t[u],\ \mathbb E_t[u] \rangle \Big]\, \mathrm ds \bigg\}\\
=\ & \mathbb E_t \bigg\{ \big\langle G\big( x_1(T) -\mathbb E_t[x_1(T)] \big),\ x_1(T) -\mathbb E_t[x_1(T)] \big\rangle +\big\langle (G+\bar G) \mathbb E_t[x_1(T)],\ \mathbb E_t[x_1(T)] \big\rangle\\
& +\langle M\xi,\ \xi \rangle +\int_t^T \Big[ \big\langle Q\big( x_1 -\mathbb E_t[x_1] \big),\ x_1 -\mathbb E_t[x_1] \big\rangle +\big\langle (Q +\bar Q) \mathbb E_t[x_1],\ E_t[x_1] \big\rangle\\
& +\big\langle R\big( u-\mathbb E_t[u] \big),\ u-\mathbb E_t[u] \big\rangle +\big\langle (R+\bar R) \mathbb E_t[u],\ \mathbb E_t[u] \big\rangle \Big]\, \mathrm ds \bigg\}.
\end{aligned}
\]
By substituting \eqref{4.1:Eq1} into the above expression of $\mathrm I_1$, we have
\[
\mathrm I_1 = \mathbb E_t \bigg\{ \big\langle \xi,\ M\xi^* +H^\top y(t) \big\rangle +\int_t^T \big\langle u,\ Ru^* +\bar R\mathbb E_t[u^*] +B^\top y +\bar B^\top \mathbb E_t[y] +D^\top z +\bar D^\top \mathbb E_t[z] \big\rangle\, \mathrm ds \bigg\}.
\]

With the previous preparations, now we prove the sufficiency. In fact, when \eqref{4.1:Opt:IIF} hold, we have  $\mathrm I_1=0$. Moreover, Condition (MF-FLQ-PD) implies that $\mathrm I_2 \geq 0$. Therefore,
\begin{equation}\label{4.1:Eq2}
J\big( \xi^* +\varepsilon \xi, u^*(\cdot) +\varepsilon u(\cdot) \big) -J\big( \xi^*, u^*(\cdot) \big) \geq 0.
\end{equation}
By the arbitrariness of $(\xi,u(\cdot))$, we prove that $(\xi^*, u^*(\cdot))$ is an optimal control of Problem (MF-FLQ).

Next, we will prove the necessity. When $(\xi^*,u^*(\cdot))$ is optimal, the inequality \eqref{4.1:Eq2} hold for all $(\xi,u(\cdot)) \in L^2_{\mathcal F_t}(\Omega;\mathbb R^n) \times L^2_{\mathbb F}(t,T;\mathbb R^m)$ and all $\varepsilon\in \mathbb R$. On the one hand, when $\varepsilon>0$, we have $\mathrm I_1 +(\varepsilon/2) \mathrm I_2 \geq 0$. Sending $\varepsilon \rightarrow 0^+$ leads to
\begin{equation}\label{4.1:Eq3}
\mathrm I_1 \geq 0, \quad \mbox{for all } (\xi,u(\cdot)) \in L^2_{\mathcal F_t}(\Omega;\mathbb R^n) \times L^2_{\mathbb F}(t,T;\mathbb R^m).
\end{equation}
On the other hand, when $\varepsilon <0$, a similar analysis yields
\begin{equation}\label{4.1:Eq4}
\mathrm I_1 \leq 0, \quad \mbox{for all } (\xi,u(\cdot)) \in L^2_{\mathcal F_t}(\Omega;\mathbb R^n) \times L^2_{\mathbb F}(t,T;\mathbb R^m).
\end{equation}
Combine \eqref{4.1:Eq3} and \eqref{4.1:Eq4} to get
\[
\mathrm I_1 = 0, \quad \mbox{for all } (\xi,u(\cdot)) \in L^2_{\mathcal F_t}(\Omega;\mathbb R^n) \times L^2_{\mathbb F}(t,T;\mathbb R^m).
\]
Finally, due to the arbitrariness of $(\xi,u(\cdot))$, the above equation implies \eqref{4.1:Opt:IIF}. The proof is finished.
\end{proof}

For convenience, we denote
\begin{equation}
\widetilde R(\cdot) = R(\cdot) +\bar R(\cdot),\quad \widetilde B(\cdot) = B(\cdot) +\bar B(\cdot),\quad
\widetilde D(\cdot) = D(\cdot) +\bar D(\cdot).
\end{equation}
We notice that, Condition (MF-FLQ-PD) implies that the matrix $M$ and the matrix-valued processes $R(\cdot)$ and $\widetilde R(\cdot)$ are invertible. Moreover, $R(\cdot)^{-1}$ and $\widetilde R(\cdot)^{-1}$ are uniformly bounded. Then, we can solve \eqref{4.1:Opt:IIF} as
\begin{equation}\label{4.1:Opt:Ini}
\xi^* = -M^{-1}H^\top y(t)
\end{equation}
and 
\begin{equation}\label{4.1:Opt:Pro}
\begin{aligned}
u^*(\cdot) =\ & -R(\cdot)^{-1} \Big\{ B(\cdot)^\top \big( y(\cdot)-\mathbb E_t[y(\cdot)] \big) +D(\cdot)^\top \big( z(\cdot)-\mathbb E_t[z(\cdot)] \big) \Big\}\\
& -\widetilde R(\cdot)^{-1} \Big\{ \widetilde B(\cdot)^\top \mathbb E_t[y(\cdot)]  +\widetilde D(\cdot)^\top \mathbb E_t[z(\cdot)] \Big\}.
\end{aligned}
\end{equation}
Now, we substitute \eqref{4.1:Opt:Ini} and \eqref{4.1:Opt:Pro} into the state equation \eqref{4.1:Sys}, and form a system together with MF-BSDE \eqref{4.1:AdjEq} (the argument $s$ is suppressed for simplicity):
\begin{equation}\label{4.1:Ham}
\left\{
\begin{aligned}
& \mathrm dx^* = \bigg\{ Ax^* +\bar A\mathbb E_t[x^*] -BR^{-1} \Big\{ B^\top \big( y-\mathbb E_t[y] \big) +D^\top \big( z-\mathbb E_t[z] \big) \Big\}\\
& \qquad -\widetilde B\widetilde R^{-1}\Big\{ \widetilde B^\top \mathbb E_t[y] +\widetilde D^\top \mathbb E_t[z] \Big\} +\alpha \bigg\}\, \mathrm ds\\
& \qquad +\sum_{i=1}^d \bigg\{ C_ix^* +\bar C_i \mathbb E_t[x^*] -D_iR^{-1} \Big\{ B^\top \big( y-\mathbb E_t[y] \big) +D^\top \big( z-\mathbb E_t[z] \big) \Big\}\\
& \qquad -\widetilde D_i\widetilde R^{-1}\Big\{ \widetilde B^\top \mathbb E_t[y] +\widetilde D^\top \mathbb E_t[z] \Big\} +\beta_i \bigg\}\, \mathrm dW_i,\quad s\in [t,T],\\
& \mathrm dy = - \Big\{ A^\top y +\bar A^\top \mathbb E_t[y] +C^\top z +\bar C^\top \mathbb E_t[z] +Qx^* +\bar Q\mathbb E_t[x^*] \Big\}\, \mathrm ds\\
& \qquad +\sum_{i=1}^d z_i\, \mathrm dW_i,\quad s\in [t,T],\\
& x^*(t) =-HM^{-1}H^\top y(t) +x_t,\qquad y(T) = Gx^*(T) +\bar G \mathbb E_t[x^*(T)].
\end{aligned}
\right.
\end{equation}
The above system is called a Hamiltonian system in the terminology of control theory, which is actually an MF-FBSDE. 

Now, we give the main result of this subsection.

\begin{proposition}\label{4.1:Prop}
Under Condition (MF-FLQ-PD), the Hamiltonian system \eqref{4.1:Ham} admits a unique solution $\theta(\cdot) =(x^*(\cdot)^\top,y(\cdot)^\top,z(\cdot)^\top)^\top \in M^2_{\mathbb F}(t,T;\mathbb R^{n(2+d)})$. Moreover, $(\xi^*, u^*(\cdot))$ defined by \eqref{4.1:Opt:Ini} and \eqref{4.1:Opt:Pro} is the unique optimal control of Problem (MF-FLQ).
\end{proposition}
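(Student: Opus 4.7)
The plan is to identify the Hamiltonian system \eqref{4.1:Ham} as an instance of MF-FBSDE \eqref{1:FBSDE}, verify that its coefficients satisfy Assumptions (H3) and (H4), invoke Theorem \ref{3:THM:FBSDE} for unique solvability, and then combine this with Lemma \ref{4.1:Lem:IIF} to obtain both existence and uniqueness of the optimal control.

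Reading off the coefficients, I have $\Psi(y) = -HM^{-1}H^\top y + x_t$, $\Phi(x,x') = Gx + \bar G x'$, and $\Gamma = (g^\top, b^\top, \sigma^\top)^\top$ as in \eqref{4.1:Ham}; all are affine with uniformly bounded (deterministic) linear parts, so (H3) is immediate. For (H4), the strongest hypothesis in Condition (MF-FLQ-PD) is $R(\cdot), \widetilde R(\cdot) \gg 0$, while $M > 0$ and the remaining matrices are only non-negative, which steers me to Case A of (H4)(i), i.e.\ $\mu > 0$ and $\nu = 0$ (the $\nu$-pieces are then trivially fulfilled by taking $P = \widetilde P = 0$ and $A(\cdot) = \widetilde A(\cdot) = 0$). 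The core computation is to expand $\mathbb E_t\langle \Gamma(s,\Theta,\mathbb E_t[\Theta]) - \Gamma(s,\bar\Theta,\mathbb E_t[\bar\Theta]),\ \hat\Theta\rangle$ after the $(1),(2)$-decomposition; the key observation is that the cross-terms produced by the forward drift $Ax + \bar A x'$ in $b$ and the backward term $-A^\top y - \bar A^\top y'$ in $g$ (and, analogously, the pair $C$ in $\sigma$ versus $-C^\top z$ in $g$) cancel pairwise --- precisely the Hamiltonian antisymmetry --- leaving
\[
-\mathbb E_t\big[\langle Q\hat X^{(1)},\hat X^{(1)}\rangle + \langle R^{-1}w^{(1)},w^{(1)}\rangle\big] - \langle (Q+\bar Q)\hat X^{(2)},\hat X^{(2)}\rangle - \langle \widetilde R^{-1}w^{(2)},w^{(2)}\rangle,
\]
with $w^{(1)} := B^\top \hat Y^{(1)} + D^\top \hat Z^{(1)}$ and $w^{(2)} := \widetilde B^\top \hat Y^{(2)} + \widetilde D^\top \hat Z^{(2)}$. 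Since $R^{-1}, \widetilde R^{-1}$ admit a uniform positive lower bound, choosing $\mu > 0$ below this bound and reading off $\mathcal B = B^\top$, $\widetilde{\mathcal B} = \widetilde B^\top$, $\mathcal C = D^\top$, $\widetilde{\mathcal C} = \widetilde D^\top$ (modulo a scalar renormalization) establishes \eqref{3:M:Gamma}. The monotonicity \eqref{3:M:Psi} follows from $-\langle HM^{-1}H^\top \hat y, \hat y\rangle \leq -(1/\|M\|)|H^\top \hat y|^2$ with $\mathcal H = H^\top$, and \eqref{3:M:Phi} with $\nu = 0$ reduces to $\mathbb E_t\langle G\hat X^{(1)},\hat X^{(1)}\rangle + \langle (G+\bar G)\hat X^{(2)}, \hat X^{(2)}\rangle \geq 0$, which is guaranteed by $G, G+\bar G \geq 0$. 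The matching domination conditions (H4)(ii) follow at once from the uniform boundedness of all matrix coefficients.

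With (H3) and (H4) in place, Theorem \ref{3:THM:FBSDE} yields a unique solution $(x^*(\cdot), y(\cdot), z(\cdot)) \in M^2_{\mathbb F}(t,T;\mathbb R^{n(2+d)})$ to \eqref{4.1:Ham}. Defining $(\xi^*, u^*(\cdot))$ via \eqref{4.1:Opt:Ini}--\eqref{4.1:Opt:Pro}, the triple $(x^*, \xi^*, u^*(\cdot))$ is admissible and, by construction, the quadruple $(x^*, y, z, u^*)$ satisfies the state equation \eqref{4.1:Sys}, the adjoint equation \eqref{4.1:AdjEq}, and the optimality characterization \eqref{4.1:Opt:IIF}; the sufficiency part of Lemma \ref{4.1:Lem:IIF} then declares $(\xi^*, u^*(\cdot))$ optimal. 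For uniqueness, any other optimal $(\xi^\sharp, u^\sharp(\cdot))$ with state $x^\sharp$ induces, via the necessity part of Lemma \ref{4.1:Lem:IIF}, an adjoint $(y^\sharp, z^\sharp)$ so that $(x^\sharp, y^\sharp, z^\sharp)$ again satisfies \eqref{4.1:Ham}; the uniqueness just proved forces $(x^\sharp, y^\sharp, z^\sharp) = (x^*, y, z)$, after which \eqref{4.1:Opt:IIF} forces $(\xi^\sharp, u^\sharp(\cdot)) = (\xi^*, u^*(\cdot))$. The main obstacle is the algebraic check that the Hamiltonian antisymmetry produces exactly the cross-term cancellation above; once that is in hand, the fit to the domination-monotonicity framework is essentially bookkeeping.
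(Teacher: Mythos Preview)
Your proposal is correct and follows essentially the same route as the paper: verify that the Hamiltonian system \eqref{4.1:Ham} satisfies Assumptions (H3) and (H4) (specifically Case~A, $\mu>0$, $\nu=0$), apply Theorem~\ref{3:THM:FBSDE} for unique solvability, and then invoke both directions of Lemma~\ref{4.1:Lem:IIF} for existence and uniqueness of the optimal control. The paper's proof merely asserts that (H4) is ``verified'' without detail, whereas you spell out the Hamiltonian cross-term cancellation and the choice of matrices $\mathcal H=H^\top$, $\mathcal B=B^\top$, $\widetilde{\mathcal B}=\widetilde B^\top$, $\mathcal C=D^\top$, $\widetilde{\mathcal C}=\widetilde D^\top$ explicitly; this is a welcome elaboration rather than a different approach.
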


\begin{proof}
Firstly, under Condition (MF-FLQ-PD), it is verified that the coefficients of MF-FBSDE \eqref{4.1:Ham} satisfy Assumptions (H3), (H4)-(i)-Case A, (H4)-(ii), and (H4)-(iii). Then, Theorem \ref{3:THM:FBSDE} shows that \eqref{4.1:Ham} admits a unique solution $\theta(\cdot) =(x^*(\cdot)^\top,y(\cdot)^\top,z(\cdot)^\top)^\top \in M^2_{\mathbb F}(t,T;\mathbb R^{n(2+d)})$. Secondly, by Lemma \ref{4.1:Lem:IIF}, the unique solvability of \eqref{4.1:Ham} implies the existence and uniqueness of optimal control of Problem (MF-FLQ). Moreover, the unique optimal control must be given by \eqref{4.1:Opt:Ini} and \eqref{4.1:Opt:Pro}.
\end{proof}

\subsection{Backward MF-LQ control Problem}

In the second LQ problem, the controlled system is given by a linear MF-BSDE:
\begin{equation}\label{4.2:Sys}
\left\{
\begin{aligned}
& \mathrm dy(s) = \Big\{ A(s)y(s) +\bar A(s) \mathbb E_t[y(s)] +B(s)z(s) +\bar B(s) \mathbb E_t [z(s)]\\
& \qquad +C(s)u(s) +\bar C(s) \mathbb E_t[u(s)] +\alpha(s) \Big\}\, \mathrm ds +\sum_{i=1}^d z_i(s)\, \mathrm dW_i(s),\quad s\in [t,T],\\
& y(T) = P\eta +\bar P \mathbb E_t[\eta] +y_T.
\end{aligned}
\right.
\end{equation}
In this subsection, we will adopt the decomposition $B(\cdot) =(B_1(\cdot), B_2(\cdot), \dots, B_d(\cdot))$ and $\bar B(\cdot) =(\bar B_1(\cdot), \bar B_2(\cdot), \dots, \bar B_d(\cdot))$. In \eqref{4.2:Sys}, we assume that $A(\cdot), \bar A(\cdot), B_i(\cdot), \bar B_i(\cdot) \in L^\infty (t,T;\mathbb R^{n\times n})$  ($i=1,2,\dots,d$), $C(\cdot), \bar C(\cdot) \in L^\infty(t,T;\mathbb R^{n\times m})$, $P, \bar P \in \mathbb R^{n\times n}$, $\alpha(\cdot) \in L^2_{\mathbb F}(\Omega;L(t,T;\mathbb R^n))$, and $y_T\in L^2_{\mathcal F_T}(\Omega;\mathbb R^n)$. The pair $(\eta, u(\cdot)) \in L^2_{\mathcal F_T}(\Omega;\mathbb R^n) \times L^2_{\mathbb F}(t,T;\mathbb R^m)$ is called an {\it admissible control}. By Proposition \ref{2:Prop:BSDE}, for any admissible control $(\eta, u(\cdot)) \in L^2_{\mathcal F_T}(\Omega;\mathbb R^n) \times L^2_{\mathbb F}(t,T;\mathbb R^m)$, MF-BSDE \eqref{4.2:Sys} admits a unique solution $(y(\cdot),z(\cdot)) \equiv (y(\cdot;\eta,u(\cdot)), z(\cdot;\eta,u(\cdot))) \in L^2_{\mathbb F}(\Omega;C([t,T];\mathbb R^n)) \times L^2_{\mathbb F}(t,T;\mathbb R^{nd})$ which is called the {\it admissible state process} under $(\eta,u(\cdot))$. Moreover, $(y(\cdot),z(\cdot),\eta,u(\cdot))$ is called an {\it admissible quadruple}. We notice that, the backward controlled system without terminal control $\eta$ has been studied by Li et al. \cite{LSX-19}. Here, we consider the general situation \eqref{4.2:Sys}. In order to evaluate admissible controls, we are also given an objective functional in a quadratic form:
\begin{equation}
\begin{aligned}
& J\big( \eta, u(\cdot) \big) = \frac 1 2 \mathbb E_t \bigg\{ \langle My(t),\ y(t) \rangle +\langle G\eta,\ \eta \rangle +\big\langle \bar G\mathbb E_t[\eta],\ \mathbb E_t[\eta] \big\rangle\\
& \qquad +\int_t^T \Big[ \langle Q(s)y(s),\ y(s) \rangle +\big\langle \bar Q(s) \mathbb E_t[y(s)],\ \mathbb E_t[y(s)] \big\rangle +\langle L(s)z(s),\ z(s) \rangle\\
& \qquad +\big\langle \bar L(s) \mathbb E_t[z(s)],\ \mathbb E_t[z(s)] \big\rangle +\langle R(s) u(s),\ u(s) \rangle +\big\langle \bar R(s) \mathbb E_t[u(s)],\ \mathbb E[u(s)] \big\rangle \Big]\, \mathrm ds \bigg\},
\end{aligned}
\end{equation}
where $M, G, \bar G \in \mathbb S^n$, $Q(\cdot), \bar Q(\cdot) \in L^\infty(t,T;\mathbb S^n)$, $L(\cdot) := \mathrm{diag} \{ L_1(\cdot), L_2(\cdot), \dots, L_d(\cdot) \}$, $\bar L(\cdot) := \mathrm{diag}\{ \bar L_1(\cdot), \bar L_2(\cdot), \dots, \bar L_d(\cdot) \}$ with $L_i(\cdot), \bar L_i(\cdot) \in L^\infty(t,T;\mathbb S^n)$ ($i=1,2,\dots,d$), and $R(\cdot), \bar R(\cdot) \in L^\infty(t,T;\mathbb S^m)$. Clearly, for any $(\eta, u(\cdot)) \in L^2_{\mathcal F_t} (\Omega;\mathbb R^n) \times L^2_{\mathbb F}(t,T;\mathbb R^m)$, $J(\eta,u(\cdot))$ is well-defined. Now, we propose a mean-field type backward LQ (MF-BLQ, for short) control problem as follows:

\medskip

\noindent{\bf Problem (MF-BLQ).} Find an admissible control $(\eta^*, u^*(\cdot)) \in L^2_{\mathcal F_T}(\Omega;\mathbb R^n) \times L^2_{\mathbb F}(t,T;\mathbb R^m)$ such that
\begin{equation}\label{4.2:Opt}
J\big( \eta^*, u^*(\cdot)\big) =\essinf_{(\eta, u(\cdot)) \in L^2_{\mathcal F_T}(\Omega;\mathbb R^n) \times L^2_{\mathbb F}(t,T;\mathbb R^m)} J\big( \eta, u(\cdot)\big).
\end{equation}
$(\eta^*, u^*(\cdot))$ satisfying \eqref{4.2:Opt} is called an {\it optimal control}, $(y^*(\cdot),z^*(\cdot)) \equiv (y(\cdot;\eta^*, u^*(\cdot)), z(\cdot;\eta^*, u^*(\cdot)))$ is called the corresponding {\it optimal state process}, and $(y^*(\cdot),z^*(\cdot),\eta^*,u^*(\cdot))$ is called an {\it optimal quadruple}.

\medskip

Similar to the previous subsection, we introduce the following uniformly positive definiteness condition for the weighting matrices in the objective functional:

\medskip

\noindent{\bf Condition (MF-BLQ-PD).} $M\geq 0$, $G>0$, $G +\bar G >0$, $Q(\cdot) \geq 0$, $Q(\cdot) +\bar Q(\cdot) \geq 0$, $L(\cdot)\geq 0$, $L(\cdot) +\bar L(\cdot) \geq 0$, $R(\cdot) \gg 0$, and $R(\cdot) +\bar R(\cdot) \gg 0$.

\medskip

\begin{lemma}\label{4.2:Lem:IIF}
Let Condition (MF-BLQ-PD) hold. Let $(y^*(\cdot),z^*(\cdot),\eta^*, u^*(\cdot))$ be an admissible quadruple. Denote by $x(\cdot) \in L^2_{\mathbb F}(\Omega;C([t,T];\mathbb R^n))$ the unique solution to the following MF-SDE:
\begin{equation}\label{4.2:AdjEq}
\left\{
\begin{aligned}
& \mathrm dx(s) = -\Big\{ A(s)^\top x(s) +\bar A(s)^\top \mathbb E_t[x(s)] +Q(s) y^*(s) +\bar Q(s) \mathbb E_t[y^*(s)] \Big\}\, \mathrm ds\\
& \quad -\sum_{i=1}^d \Big\{ B_i(s)^\top x(s) +\bar B_i(s)^\top \mathbb E_t[x(s)] +L_i(s) z_i^*(s) +\bar L_i(s) \mathbb E_t[z_i^*(s)] \Big\}\, \mathrm dW_i(s),\\
& \hskip 11cm s\in [t,T],\\
& x(t) = -My^*(t).
\end{aligned}
\right.
\end{equation}
Then, $(\eta^*, u^*(\cdot))$ is an optimal control of Problem (MF-BLQ) if and only if
\begin{equation}\label{4.2:Opt:IIF}
\left\{
\begin{aligned}
& G \eta^* +\bar G \mathbb E_t[\eta^*] -P^\top x(T) -\bar P^\top \mathbb E_t[x(T)] =0,\\
& R(\cdot) u^*(\cdot) +\bar R(\cdot) \mathbb E_t[u^*(\cdot)] +C(\cdot)^\top x(\cdot) +\bar C(\cdot)^\top \mathbb E_t[x(\cdot)] =0.
\end{aligned}
\right.
\end{equation}
\end{lemma}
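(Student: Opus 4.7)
The plan is to mirror the argument of Lemma \ref{4.1:Lem:IIF}, swapping the roles of the forward state equation and the backward adjoint equation. Fix an admissible quadruple $(y^*(\cdot),z^*(\cdot),\eta^*,u^*(\cdot))$ and the associated adjoint process $x(\cdot)$ solving \eqref{4.2:AdjEq}. For arbitrary $(\eta,u(\cdot)) \in L^2_{\mathcal F_T}(\Omega;\mathbb R^n) \times L^2_{\mathbb F}(t,T;\mathbb R^m)$ and $\varepsilon \in \mathbb R$, let $(y^\varepsilon(\cdot),z^\varepsilon(\cdot))$ be the admissible state under $(\eta^*+\varepsilon\eta,u^*(\cdot)+\varepsilon u(\cdot))$, and set $(y_1(\cdot),z_1(\cdot)) := ((y^\varepsilon-y^*)/\varepsilon,(z^\varepsilon-z^*)/\varepsilon)$. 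By linearity, $(y_1,z_1)$ solves the MF-BSDE obtained from \eqref{4.2:Sys} by replacing $\alpha(\cdot)$ by $0$, $y_T$ by $0$, and $(\eta,u(\cdot))$ by the perturbation directions, with terminal value $y_1(T) = P\eta + \bar P\mathbb E_t[\eta]$.

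The next step is a duality identity. Applying It\^o's formula to $\langle x(\cdot),\ y_1(\cdot)\rangle$ on $[t,T]$ and using the drift/diffusion structure of \eqref{4.2:AdjEq} and of the equation for $y_1$, the martingale parts vanish upon taking $\mathbb E_t$, and after grouping terms of the form $\langle \bar A^\top \mathbb E_t[x],\ y_1\rangle = \langle \mathbb E_t[x],\ \bar A y_1\rangle$ (and similarly for $\bar B_i$, $\bar Q$, $\bar L_i$), one obtains
\begin{equation*}
\mathbb E_t\bigg\{\langle x(T),\ P\eta+\bar P\mathbb E_t[\eta]\rangle - \langle -My^*(t),\ y_1(t)\rangle\bigg\} = \mathbb E_t\bigg\{\int_t^T\Big[\langle Qy^*+\bar Q\mathbb E_t[y^*],\ y_1\rangle +\sum_i\langle L_i z_i^* + \bar L_i\mathbb E_t[z_i^*],\ z_{1,i}\rangle + \langle C^\top x+\bar C^\top\mathbb E_t[x],\ u\rangle\Big]\,\mathrm ds\bigg\},
\end{equation*}
which, after rearranging via $\langle \bar P^\top \mathbb E_t[x(T)],\eta\rangle = \langle \mathbb E_t[\bar P^\top x(T)],\mathbb E_t[\eta]\rangle$, yields the identity analogous to \eqref{4.1:Eq1}.

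With this identity in hand, expand $J(\eta^*+\varepsilon\eta,u^*(\cdot)+\varepsilon u(\cdot)) - J(\eta^*,u^*(\cdot)) = \varepsilon \mathrm I_1 + \tfrac{\varepsilon^2}{2}\mathrm I_2$. The first-order term $\mathrm I_1$ collects the cross-terms and, after substituting the duality identity, rewrites as
\begin{equation*}
\mathrm I_1 = \mathbb E_t\bigg\{\langle \eta,\ G\eta^*+\bar G\mathbb E_t[\eta^*] - P^\top x(T)-\bar P^\top\mathbb E_t[x(T)]\rangle + \int_t^T \langle u,\ Ru^*+\bar R\mathbb E_t[u^*]+C^\top x+\bar C^\top\mathbb E_t[x]\rangle\,\mathrm ds\bigg\},
\end{equation*}
so \eqref{4.2:Opt:IIF} is precisely the condition $\mathrm I_1 = 0$ for all perturbations. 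For the second-order term $\mathrm I_2$, apply the decomposition $\xi = \xi^{(1)} + \xi^{(2)}$ of \eqref{3:RV:Dec} to $y_1(T), y_1(\cdot), z_{1,i}(\cdot), \eta, u(\cdot)$, so that each quadratic block splits into a part weighted by $(M,G,Q,L_i,R)$ acting on the fluctuation and a part weighted by $(M,G+\bar G,Q+\bar Q,L_i+\bar L_i,R+\bar R)$ acting on the conditional mean; Condition (MF-BLQ-PD) then gives $\mathrm I_2 \geq 0$.

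Sufficiency is immediate: if \eqref{4.2:Opt:IIF} holds then $\mathrm I_1 = 0$ and $\mathrm I_2\geq 0$, so $J(\eta^*+\varepsilon\eta,u^*+\varepsilon u) \geq J(\eta^*,u^*)$ for all directions and all $\varepsilon$, forcing optimality. For necessity, optimality gives $\varepsilon \mathrm I_1 + (\varepsilon^2/2)\mathrm I_2 \geq 0$ for all $\varepsilon \in \mathbb R$; sending $\varepsilon \to 0^+$ and $\varepsilon \to 0^-$ separately forces $\mathrm I_1 \geq 0$ and $\mathrm I_1 \leq 0$, hence $\mathrm I_1 = 0$ for every $(\eta,u(\cdot))$, from which \eqref{4.2:Opt:IIF} follows by choosing $\eta$ supported on any $\mathcal F_T$-measurable test variable and $u(\cdot)$ an arbitrary square-integrable process. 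The step I expect to require the most care is the duality identity via It\^o's formula: because the adjoint equation's diffusion contains both $B_i^\top x$ and $\bar B_i^\top \mathbb E_t[x]$, one must carefully use the identity $\mathbb E_t[\langle \bar B_i^\top \mathbb E_t[x],\,z_{1,i}\rangle] = \mathbb E_t[\langle \bar B_i\,\mathbb E_t[x],\,\mathbb E_t[z_{1,i}]\rangle]$ (and its analogues for $\bar A,\bar P,\bar Q,\bar L_i$) to transfer the mean-field adjoint terms onto the perturbation side in the correct form; the remainder of the proof is then a direct transcription of the scheme used for Lemma \ref{4.1:Lem:IIF}.
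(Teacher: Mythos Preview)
Your proposal is correct and follows exactly the approach the paper intends: the paper itself omits the proof, stating only that it is ``similar to that of Lemma \ref{4.1:Lem:IIF}'', and your variational expansion $J(\eta^*+\varepsilon\eta,u^*+\varepsilon u)-J(\eta^*,u^*)=\varepsilon\mathrm I_1+\tfrac{\varepsilon^2}{2}\mathrm I_2$ together with the duality argument via It\^o's formula applied to $\langle x(\cdot),y_1(\cdot)\rangle$ is precisely that analogue. One minor slip: in your displayed duality identity the $\langle Qy^*+\bar Q\mathbb E_t[y^*],y_1\rangle$ and $\langle L_iz_i^*+\bar L_i\mathbb E_t[z_i^*],z_{1,i}\rangle$ terms on the right-hand side should carry minus signs (they come from the drift of $x$, which has an overall minus), but this does not affect your stated form of $\mathrm I_1$, which is correct.
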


The proof of the above Lemma \ref{4.2:Lem:IIF} is similar to that of Lemma \ref{4.1:Lem:IIF}. Then we omit it.

Let us denote
\[
\widetilde G = G+\bar G,\quad
\widetilde P = P+\bar P,\quad 
\widetilde R(\cdot) =R(\cdot) +\bar R(\cdot),\quad 
\widetilde C(\cdot) =C(\cdot) +\bar C(\cdot).
\]
Then, under Condition (MF-BLQ-PD), \eqref{4.2:Opt:IIF} can be rewritten in an explicit form:
\begin{equation}\label{4.2:Opt:TemPro}
\left\{
\begin{aligned}
& \eta^* = G^{-1} P^\top \big( x(T) -\mathbb E_t[x(T)] \big) +\widetilde G^{-1} \widetilde P^\top \mathbb E_t[x(T)],\\
& u^*(\cdot) = -R(\cdot)^{-1}C(\cdot)^\top \big( x(\cdot) -\mathbb E_t[x(\cdot)] \big) -\widetilde R(\cdot)^{-1} \widetilde C(\cdot)^\top \mathbb E_t[x(\cdot)].
\end{aligned}
\right.
\end{equation}
Substituting \eqref{4.2:Opt:TemPro} into the state equation \eqref{4.2:Sys} and combining with MF-SDE \eqref{4.2:AdjEq} yield the following Hamiltonian system (the argument $s$ is suppressed for simplicity):
\begin{equation}\label{4.2:Ham}
\left\{
\begin{aligned}
& \mathrm dx =- \Big\{ A^\top x +\bar A^\top \mathbb E_t[x] +Qy^* +\bar Q \mathbb E_t[y^*] \Big\}\, \mathrm ds\\
& \qquad -\sum_{i=1}^d \Big\{ B_i^\top x +\bar B_i^\top \mathbb E_t[x] +L_iz_i^* +\bar L_i\mathbb E_t[z_i^*] \Big\}\, \mathrm dW_i,\quad s\in [t,T],\\
& \mathrm dy^* = \Big\{ Ay^* +\bar A\mathbb E_t[y^*] +Bz^* +\bar B\mathbb E_t[z^*] -CR^{-1}C^\top \big(x -\mathbb E_t[x]\big) \\
& \qquad -\widetilde C \widetilde R^{-1} \widetilde C^\top \mathbb E_t[x] +\alpha \Big\}\, \mathrm ds +\sum_{i=1}^d z_i^*\, \mathrm dW_i,\quad s\in [t,T],\\
& x(t) =-My^*(t),\qquad y^*(T) = PG^{-1}P^\top \big( x(T) -\mathbb E_t[x(T)] \big) +\widetilde P\widetilde G^{-1}\widetilde P^\top \mathbb E_t[x(T)].
\end{aligned}
\right.
\end{equation}

We give the main result of this subsection.

\begin{proposition}\label{4.2:Prop}
Under Condition (MF-BLQ-PD), the Hamiltonian system \eqref{4.2:Ham} admits a unique solution $\theta(\cdot) =(x(\cdot)^\top,y^*(\cdot)^\top,z^*(\cdot)^\top)^\top \in M^2_{\mathbb F}(t,T;\mathbb R^{n(2+d)})$. Moreover, $(\eta^*, u^*(\cdot))$ defined by \eqref{4.2:Opt:TemPro} is the unique optimal control of Problem (MF-BLQ).
\end{proposition}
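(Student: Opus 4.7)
My plan is to mirror the two-step strategy used for Proposition \ref{4.1:Prop}: first verify that the coefficients of the Hamiltonian system \eqref{4.2:Ham}, cast as an instance of the abstract MF-FBSDE \eqref{1:FBSDE}, satisfy Assumptions (H3) and (H4); then invoke Theorem \ref{3:THM:FBSDE} for unique solvability in $M^2_{\mathbb F}(t,T;\mathbb R^{n(2+d)})$, and combine with Lemma \ref{4.2:Lem:IIF} to conclude that \eqref{4.2:Opt:TemPro} gives the unique optimal control. In the embedding into \eqref{1:FBSDE}, the process $x$ plays the role of the forward variable (initial coupling $\Psi(y):=-My$), $(y^*,z^*)$ is the backward pair (terminal coupling $\Phi(x,x'):=PG^{-1}P^\top(x-x')+\widetilde P\widetilde G^{-1}\widetilde P^\top x'$), and $\Gamma=(g^\top,b^\top,\sigma^\top)^\top$ is read off from the drifts and diffusions of \eqref{4.2:Ham}. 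Assumption (H3) is routine from the $L^\infty$ bounds on the data together with the boundedness of $G^{-1},\widetilde G^{-1},R^{-1},\widetilde R^{-1}$ granted by Condition (MF-BLQ-PD).

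In contrast to the forward problem, which fell into Case A of (H4)-(i), this one should land in Case B: $\mu=0$ and $\nu>0$. Concretely, the matrices of (H4) are chosen as $P^\top,\widetilde P^\top$ in place of $P,\widetilde P$ and $C(\cdot)^\top,\widetilde C(\cdot)^\top$ in place of $A(\cdot),\widetilde A(\cdot)$; the remaining H4-matrices $H,B,\widetilde B,C,\widetilde C$ are irrelevant because they are multiplied by $\mu=0$. The $\Psi$-monotonicity is immediate: $\langle -M\widehat y,\widehat y\rangle\le 0$ by $M\ge 0$. The $\Phi$-monotonicity \eqref{3:M:Phi}, after the $\mathcal F_t$-decomposition, reduces to bounding $\mathbb E_t[\langle G^{-1}P^\top\widehat X^{(1)},P^\top\widehat X^{(1)}\rangle+\langle\widetilde G^{-1}\widetilde P^\top\widehat X^{(2)},\widetilde P^\top\widehat X^{(2)}\rangle]$ below by $\nu\,\mathbb E_t[|P^\top\widehat X^{(1)}|^2+|\widetilde P^\top\widehat X^{(2)}|^2]$, which holds for any $\nu\le\min\{\|G\|^{-1},\|\widetilde G\|^{-1}\}$, positive thanks to $G,\widetilde G>0$.

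The substantive step is the $\Gamma$-monotonicity \eqref{3:M:Gamma}. I would expand the pairing $\langle\widehat g,\widehat x\rangle+\langle\widehat b,\widehat y^*\rangle+\sum_i\langle\widehat\sigma_i,\widehat z_i^*\rangle$, decompose each coefficient difference into its $\mathcal F_t$-mean-zero and $\mathcal F_t$-measurable parts, and observe that the forward--backward duality terms cancel identically: for $k=1,2$, each $\langle A\widehat y^{*(k)},\widehat x^{(k)}\rangle$ coming from $g$ is balanced by $\langle -A^\top\widehat x^{(k)},\widehat y^{*(k)}\rangle$ coming from $b$, and similarly for $\bar A$, $B_i$, and $\bar B_i$. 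Only the non-positive quadratic forms with kernels $CR^{-1}C^\top,\widetilde C\widetilde R^{-1}\widetilde C^\top,Q,Q+\bar Q,L_i,L_i+\bar L_i$ survive. Under Condition (MF-BLQ-PD), $R^{-1}\ge\|R\|_\infty^{-1}I$ and $\widetilde R^{-1}\ge\|\widetilde R\|_\infty^{-1}I$, so the first two kernels contribute at least $\nu\,\mathbb E_t[|C^\top\widehat x^{(1)}|^2+|\widetilde C^\top\widehat x^{(2)}|^2]$ in magnitude after a final shrinkage of $\nu$; the remaining $Q$ and $L_i$ pieces are non-positive and can be discarded. The domination conditions (H4)-(ii) for $\Phi$ and $g$ follow from the uniform boundedness of $PG^{-1},\widetilde P\widetilde G^{-1},CR^{-1},\widetilde C\widetilde R^{-1}$; those for $b$ and $\sigma$ are vacuous since $1/\mu=+\infty$. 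The main obstacle is exactly this $\Gamma$-monotonicity bookkeeping---the $\mathcal F_t$-decomposition coupled with the forward--backward duality cancellation---after which Theorem \ref{3:THM:FBSDE} and Lemma \ref{4.2:Lem:IIF} close the argument immediately.
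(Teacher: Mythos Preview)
Your proposal is correct and follows exactly the route of the paper: the authors simply state that under Condition (MF-BLQ-PD) the coefficients of \eqref{4.2:Ham} satisfy Assumptions (H3), (H4)-(i)-Case~B, (H4)-(ii), and (H4)-(iii), invoke Theorem \ref{3:THM:FBSDE}, and conclude via Lemma \ref{4.2:Lem:IIF}. Your outline supplies precisely the verification details they omit --- the choice of $P^\top,\widetilde P^\top,C(\cdot)^\top,\widetilde C(\cdot)^\top$ as the (H4)-matrices, the $\mathcal F_t$-decomposition, and the forward--backward duality cancellations in the $\Gamma$-monotonicity --- all of which are correct.
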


\begin{proof}
Under Condition (MF-BLQ-PD), we verify that the coefficients of MF-FBSDE \eqref{4.2:Ham} satisfy Assumptions (H3), (H4)-(i)-Case B, (H4)-(ii), and (H4)-(iii). Then, Theorem \ref{3:THM:FBSDE} shows that \eqref{4.2:Ham} admits a unique solution in the space $M^2_{\mathbb F}(t,T;\mathbb R^{n(2+d)})$. Similar to the proof of Proposition \ref{4.1:Prop}, the remaining of this proof is implied by Lemma \ref{4.2:Lem:IIF}.
\end{proof}

As an echo of Remark \ref{3:Rem:Sym}, we give the following remark. Due to the similarity, the details will be omitted.

\begin{remark}\label{4.2:Rem}
If the essential infimum in Problem (MF-FLQ) (resp. Problem (MF-BLQ)) is replaced with the essential supremum, and at the same time Condition (MF-FLQ-PD) (resp. Condition (MF-BLQ-PD)) is replaced with the corresponding uniformly negative definiteness condition, then we will have similar conclusions to Proposition \ref{4.1:Prop} (resp. Proposition \ref{4.2:Prop}). We notice the difference is: In the related assumptions satisfied by the Hamiltonian system arising from the new MF-LQ problem, Assumption (H4)-(iii) will be replaced by Assumption (H4)-(iii)$'$. 
\end{remark}

\begin{appendices}

\section{Proofs of Proposition \ref{2:Prop:SDE} and Proposition \ref{2:Prop:BSDE}}\label{Sec:App}

In this appendix, we devote ourselves to proving Proposition \ref{2:Prop:SDE} and Proposition \ref{2:Prop:BSDE}.

\subsection{Proof of Proposition \ref{2:Prop:SDE}}\label{Sub:App1}

We split the whole proof into three steps.

{\bf Step 1: Unique solvability.} Let $[t',T'] \subset [t,T]$ and $x_{t'} \in L^2_{\mathcal F_{t'}(\Omega;\mathbb R^n)}$. For any $x(\cdot) \in L^2_{\mathbb F}(\Omega;C([t',T'];\mathbb R^n))$, we introduce the following SDE:
\begin{equation}\label{A:SDE1}
\left\{
\begin{aligned}
& \mathrm dX(s) =b\big( s,X(s),\mathbb E_t[x(s)] \big)\, \mathrm ds +\sum_{i=1}^d \sigma_i \big( s,X(s),\mathbb E_t[x(s)] \big)\, \mathrm dW_i(s), \quad s\in [t',T'],\\
& X(t') =x_{t'}.
\end{aligned}
\right.
\end{equation}
Since the mappings $b$ and $\sigma$ are Lipschitz continuous with respect to $x'$, $b(\cdot,0,0) \in L^2_{\mathbb F}(\Omega; L([t,T];\mathbb R^n))$ and $\sigma(\cdot,0,0) \in L^2_{\mathbb F}(t,T;\mathbb R^{nd})$, then it is easy to verify that $b(\cdot,0,\mathbb E_t[x(\cdot)]) \in L^2_{\mathbb F}(\Omega;L([t',T'];\mathbb R^n))$ and $\sigma(\cdot,0,\mathbb E_t[x(\cdot)]) \in L^2_{\mathbb F}(t',T';\mathbb R^{nd})$. Therefore, the classical theory of SDEs implies that \eqref{A:SDE1} admits a unique solution $X(\cdot) \in L^2_{\mathbb F}(\Omega;C([t',T'];\mathbb R^n))$. Due to the arbitrariness of $x(\cdot)$, we have actually constructed a mapping $\mathscr T_F: L^2_{\mathbb F}(\Omega;C([t',T'];\mathbb R^n)) \rightarrow L^2_{\mathbb F}(\Omega;C([t',T'];\mathbb R^n))$:
\[
\mathscr T_F(x(\cdot)) = X(\cdot).
\]

Let $x(\cdot), \bar x(\cdot) \in L^2_{\mathbb F}(\Omega;C([t',T'];\mathbb R^n))$ and $X(\cdot) =\mathscr T_F(x(\cdot))$, $\bar X(\cdot) =\mathscr T_F(\bar x(\cdot))$. Denote $\widehat x(\cdot) =x(\cdot) -\bar x(\cdot)$ and $\widehat X(\cdot) =X(\cdot) -\bar X(\cdot)$. Then the continuous dependence property of the solution on the coefficients in the classical theory of SDEs shows
\begin{equation}\label{A:Eq1}
\begin{aligned}
\mathbb E\bigg[ \sup_{s\in [t',T']}\big| \widehat X(s) \big|^2 \bigg] \leq\ & K_F \mathbb E\bigg\{ \bigg( \int_{t'}^{T'} \Big| b\big( s,\bar X(s), \mathbb E_t[x(s)] \big) -b\big( s,\bar X(s), \mathbb E_t[\bar x(s)] \big) \Big|\, \mathrm ds \bigg)^2\\
& +\int_{t'}^{T'} \Big| \sigma\big( s,\bar X(s), \mathbb E_t[x(s)] \big) -\sigma\big( s,\bar X(s), \mathbb E_t[\bar x(s)] \big) \Big|^2\, \mathrm ds \bigg\},
\end{aligned}
\end{equation}
where $K_F:=K_F(T-t,L)>0$ is a constant depending on $(T-t)$ and the Lipschitz constant of the mappings $b$ and $\sigma$ with respect to $x$. Moreover, the Lipschitz continuity of $b$ and $\sigma$ with respect ot $x'$ was also used to yield
\[
\begin{aligned}
\mathbb E\bigg[ \sup_{s\in [t',T']}\big| \widehat X(s) \big|^2 \bigg] \leq\ & K_F L^2 \mathbb E\bigg\{ \bigg( \int_{t'}^{T'} \mathbb E_t \big[|\widehat x(s)| \big]\, \mathrm ds \bigg)^2 +\int_{t'}^{T'} \Big( \mathbb E_t\big[ |\widehat x(s)| \big] \Big)^2\, \mathrm ds \bigg\}\\
\leq\ & 2K_F L^2 (T'-t') \max \big\{ (T'-t'), 1 \big\} \mathbb E\bigg[ \sup_{s\in [t',T']}\big| \widehat x(s) \big|^2 \bigg].
\end{aligned}
\]
Let
\begin{equation}\label{A:deltaF}
\delta_F = \min \bigg\{ 1,\ \frac{1}{8K_F L^2} \bigg\}.
\end{equation}
Then, when $T'-t'\leq \delta_F$, the above inequality implies 
\[
\big\Vert \widehat X(\cdot) \big\Vert_{L^2_{\mathbb F}(\Omega;C([t',T'];\mathbb R^n))} \leq \frac 1 2 \big\Vert \widehat x(\cdot) \big\Vert_{L^2_{\mathbb F}(\Omega;C([t',T'];\mathbb R^n))},
\]
i.e., the mapping $\mathscr T_F$ is a contraction. Due to Banach's contraction mapping theorem, it admits a unique fixed point which is the unique solution to the following MF-SDE:
\begin{equation}
\left\{
\begin{aligned}
& \mathrm dx(s) =b\big( s,x(s),\mathbb E_t[x(s)] \big)\, \mathrm ds +\sum_{i=1}^d \sigma_i \big( s,x(s),\mathbb E_t[x(s)] \big)\, \mathrm dW_i(s), \quad s\in [t',T'],\\
& x(t') =x_{t'}.
\end{aligned}
\right.
\end{equation}

Now, we turn our attention to MF-SDE \eqref{2:SDE}. Based on the previous study, we divide the whole interval into 
\begin{equation}
N_F =\inf \bigg\{ \bar N \in \mathbb N\, \bigg|\ \bar N \geq \frac{T-t}{\delta_F} \bigg\}
\end{equation}
sub-intervals with $\delta_F$ defined by \eqref{A:deltaF} as the step length. Then, we derive that MF-SDE \eqref{2:SDE} admits a unique solution $x(\cdot)\in L^2_{\mathbb F}(\Omega;C([t,T];\mathbb R^n))$.

{\bf Step 2: Estimate \eqref{2:SDE:Est1}.} For any $s\in [t,T]$, the classical estimate of SDEs leads to
\[
\begin{aligned}
& \mathbb E_t \bigg[ \sup_{r\in [t,s]} |x(r)|^2 \bigg]\\
\leq\ & K_F \bigg\{ |x_t|^2 +\mathbb E_t \bigg[ \bigg( \int_t^s \Big| b\big(r,0,\mathbb E_t[x(r)]\big) \Big|\, \mathrm dr \bigg)^2 +\int_t^s \Big| \sigma\big(r,0,\mathbb E_t[x(r)]\big) \Big|^2\, \mathrm dr \bigg] \bigg\}\\
\leq\ & M_F +2K_F L^2 \mathbb E_t \bigg[ \bigg( \int_t^s \mathbb E_t[|x(r)|]\, \mathrm dr \bigg)^2 +\int_t^s \Big( \mathbb E_t [|x(r)|] \Big)^2\, \mathrm dr \bigg],
\end{aligned}
\]
where $K_F$ is the same constant as in \eqref{A:Eq1} and
\begin{equation}
M_F := 2K_F \bigg\{ |x_t|^2 +\mathbb E_t \bigg[ \bigg( \int_t^T \big|b(r,0,0)\big|\, \mathrm dr \bigg)^2 +\int_t^T \big| \sigma(r,0,0) \big|^2\, \mathrm dr \bigg] \bigg\}.
\end{equation}
We continue to derive
\[
\mathbb E_t \bigg[ \sup_{r\in [t,s]} |x(r)|^2 \bigg]
\leq M_F +2K_F L^2(T-t+1) \int_t^s \mathbb E_t \bigg[\sup_{u\in [t,r]} |x(u)|^2\bigg]\, \mathrm dr.
\]
Then, Gronwall's inequality leads to the desired estimate \eqref{2:SDE:Est1}.

{\bf Step 3: Estimate \eqref{2:SDE:Est2}.} Denote $\widehat x_t :=x_t -\bar x_t$ and
\[
\begin{aligned}
& \widehat b(s,x,x') := b\big( s, x+\bar x(s), x'+\mathbb E_t[\bar x(s)] \big) -\bar b\big( s, \bar x(s), \mathbb E_t[\bar x(s)] \big),\\
& \widehat \sigma(s,x,x') := \sigma\big( s, x+\bar x(s), x'+\mathbb E_t[\bar x(s)] \big) -\bar \sigma\big( s, \bar x(s), \mathbb E_t[\bar x(s)] \big)
\end{aligned}
\]
for all $(s,\omega,x,x') \in [t,T]\times \Omega\times \mathbb R^n \times \mathbb R^n$. It is easy to verify that the new defined coefficients $(\widehat x_t, \widehat b, \widehat \sigma)$ also satisfy Assumption (H1) with the same Lipschitz constant. Therefore, by the proved Step 1, the following MF-SDE
\begin{equation}\label{A:SDE2}
\left\{
\begin{aligned}
& \mathrm d\widehat x(s) =\widehat b\big( s, \widehat x(s), \mathbb E_t[\widehat x(s)] \big)\, \mathrm ds +\sum_{i=1}^d \widehat \sigma_i\big( s, \widehat x(s), \mathbb E_t[\widehat x(s)] \big)\, \mathrm dW_i(s),\quad s\in [t,T],\\
& \widehat x(t) =\widehat x_t
\end{aligned}
\right.
\end{equation}
admits a unique solution. Moreover, we directly verify that the unique solution happens to be $\widehat x(\cdot) = x(\cdot) -\bar x(\cdot)$. Finally, by the proved Step 2, the estimate \eqref{2:SDE:Est1} is applied to MF-SDE \eqref{A:SDE2} to yield \eqref{2:SDE:Est2}. The proof is finished.

\subsection{Proof of Proposition \ref{2:Prop:BSDE}}\label{Sub:App2}

Similar to the proof of Proposition \ref{2:Prop:SDE}, here we also split the whole proof into three steps.

{\bf Step 1: Unique solvability.} This step is similar to Step 1 in the proof of Proposition \ref{2:Prop:SDE}. Then, we would like to omit it.

{\bf Step 2: Estimate \eqref{2:BSDE:Est1}.} For any $[t',T'] \subset [t,T]$, by the classical estimate of BSDEs, we have
\[
\begin{aligned}
& \mathbb E_t \bigg[ \sup_{s\in [t',T']}|y(s)|^2 +\int_{t'}^{T'} |z(s)|^2\, \mathrm ds \bigg]\\
\leq\ & K_B \mathbb E_t \bigg\{ \big|y(T')\big|^2 +\bigg( \int_{t'}^{T'} \Big| g\big( s,0,\mathbb E_t[y(s)],0,\mathbb E_t[z(s)] \big) \Big|\, \mathrm ds \bigg)^2 \bigg\}\\
\leq\ & 3K_B \mathbb E_t \bigg\{ \big|y(T')\big|^2 +\bigg( \int_{t'}^{T'} \Big| g\big( s,0,0,0,0 \big) \Big|\, \mathrm ds \bigg)^2\\
& +L^2(T'-t')^2 \sup_{s\in [t',T']} |y(s)|^2 +L^2(T'-t')\int_{t'}^{T'} |z(s)|^2\, \mathrm ds \bigg\},
\end{aligned}
\]
where $K_B: = K_B(T-t,L)>0$ is a constant depending on $(T-t)$ and the Lipschitz constant of $g$ with respect to $(y,z)$. Let
\begin{equation}\label{A:deltaB}
\delta_B := \min \bigg\{1,\ \frac{1}{6K_B L^2}\bigg\}.
\end{equation}
Then, when $T'-t'\leq \delta_B$, the above inequality implies
\begin{equation}\label{A:Eq2}
\mathbb E_t \bigg[ \sup_{s\in [t',T']}|y(s)|^2 +\int_{t'}^{T'} |z(s)|^2\, \mathrm ds \bigg] \leq 6K_B \mathbb E_t \bigg\{ \big|y(T')\big|^2 +\bigg( \int_{t'}^{T'} \Big| g\big( s,0,0,0,0 \big) \Big|\, \mathrm ds \bigg)^2 \bigg\}.
\end{equation}

Now, we divide the whole interval $[t,T]$ into 
\begin{equation}
N_B =\inf \bigg\{ \bar N \in \mathbb N\, \bigg|\ \bar N \geq \frac{T-t}{\delta_B} \bigg\}
\end{equation}
sub-intervals with $\delta_B$ defined by \eqref{A:deltaB} as the step length. Denote the split points by $t =t_0 <t_1<t_2 <\cdots <t_{N_B-1} <t_{N_B} =T$. Then, for each $k=0,1,\dots,N_B-1$, from \eqref{A:Eq2}, we have
\begin{equation}\label{A:Eq3}
\mathbb E_t \bigg[ \sup_{s\in [t_k,t_{k+1}]}|y(s)|^2 +\int_{t_k}^{t_{k+1}} |z(s)|^2\, \mathrm ds \bigg] \leq 6K_B \mathbb E_t \bigg\{ \big|y(t_{k+1})\big|^2 +\bigg( \int_{t_k}^{t_{k+1}} \Big| g\big( s,0,0,0,0 \big) \Big|\, \mathrm ds \bigg)^2 \bigg\}.
\end{equation}
Without loss of generality, we can assume that $6K_B \geq 1$ (otherwise, we can use $\widetilde K_B := \max\{ K_B, 1/6\}$ instead of $K_B$). For $k=N_B-1, N_B-2,\dots,0$, by some recursive but straightforward calculations from \eqref{A:Eq3}, we have
\begin{equation}\label{A:Eq4}
\begin{aligned}
& \mathbb E_t \bigg[ \sup_{s\in [t_k,t_{k+1}]}|y(s)|^2 +\int_{t_k}^{t_{k+1}} |z(s)|^2\, \mathrm ds \bigg]\\
\leq\ & (6K_B)^{N_B-k} \mathbb E_t \bigg\{ |y_T|^2 +\bigg( \int_{t_k}^T \Big|g(s,0,0,0,0)\Big|\, \mathrm ds \bigg)^2 \bigg\}\\
\leq\ & (6K_B)^{N_B-k} \mathbb E_t \bigg\{ |y_T|^2 +\bigg( \int_t^T \Big|g(s,0,0,0,0)\Big|\, \mathrm ds \bigg)^2 \bigg\}.
\end{aligned}
\end{equation}
Since
\[
\sup_{s\in [t,T]} |y(s)|^2 \leq \sum_{k=0}^{N-1} \sup_{s\in [t_k,t_{k+1}]} |y(s)|^2 \quad \mbox{and} \quad
\int_t^T |z(s)|^2\, \mathrm ds = \sum_{k=0}^{N-1} \int_{t_k}^{t_{k+1}} |z(s)|^2\, \mathrm ds,
\]
we sum up \eqref{A:Eq4} from $k=0$ to $k=N_B-1$ to get
\begin{equation}
\mathbb E_t \bigg[ \sup_{s\in [t,T]}|y(s)|^2 +\int_t^T |z(s)|^2\, \mathrm ds \bigg] \leq \bigg( \sum_{k=0}^{N_B-1} (6K_B)^{N_B-k} \bigg) \mathbb E_t\bigg[ |y_T|^2 +\bigg( \int_t^T \big| g\big( s,0,0,0,0 \big)  \big|\, \mathrm ds \bigg)^2 \bigg].
\end{equation}
The estimate \eqref{2:BSDE:Est1} for MF-BSDEs is proved.

{\bf Step 3: Estimate \eqref{2:BSDE:Est2}.} This step is similar to Step 3 in the proof of Proposition \ref{2:Prop:SDE}. We omit it.

\end{appendices}

\end{document}